\newtheorem{theorem}{Theorem}
\newtheorem{corollary}[theorem]{Corollary}
\newtheorem{definition}[theorem]{Definition}
\newtheorem{example}[theorem]{Example}
\newtheorem{lemma}[theorem]{Lemma}
\newtheorem{notation}[theorem]{Notation}
\newtheorem{proposition}[theorem]{Proposition}
\newtheorem{remark}[theorem]{Remark}
\newenvironment{proof}[1][Proof]{\noindent\textbf{#1.} }{\ \rule{0.5em}{0.5em}}
\begin{document}

\title{Every continuum has a compact universal cover}
\author{Conrad Plaut \\
Department of Mathematics\\
The University of Tennessee\\
Knoxville TN 37996\\
cplaut@utk.edu}
\maketitle

\begin{abstract}
We define the compact universal cover of a compact, metrizable connected
space (i.e. a continuum) $X$ to be the inverse limit of all continua that
regularly cover $X$. We show that such covers do indeed form an inverse
system with bonding maps that are regular covering maps, and the projection $%
\widehat{\phi }:\widehat{X}\rightarrow X=\widehat{X}/\pi _{P}(X)$ of the
inverse limit is a generalized regular covering map, where $\widehat{X}$ is
a continuum that is \textquotedblleft compactly simply
connected\textquotedblright\ in the sense that $\pi _{P}(\widehat{X})=1$. We
call $\pi _{P}(X)$ the \textquotedblleft profinite fundamental
group\textquotedblright\ of $X$. We prove a Galois Correspondence for closed
normal subgroups of $\pi _{P}(X)$, uniqueness, universal and lifting
properties. As an application we prove that every non-compact manifold that
regularly covers a compact manifold has a unique \textquotedblleft profinite
compactification\textquotedblright , i.e. an imbedding as a dense subset in
a compactly simply connected continuum. As part of the proof of
metrizability of $\widehat{X}$ we show that every continuum has at most $n!$
non-equivalent $n$-fold covers by continua.
\end{abstract}

\section{Introduction}

Efforts to generalize the powerful tools of covering space theory beyond
Poincar\'{e} spaces (connected, locally path connected and semilocally
simply connected spaces) go back at least to the work of Kawada in 1950 (%
\cite{Kaw}). Kawada defined a notion of generalized covering map for
topological groups to be a homomorphism having conditions, including totally
disconnected kernel, that permit covering maps that are not locally 1-1.
Note that his definition was too weak because he did not require that the
kernel be a complete topological group, and Berestovskii-Plaut showed in 
\cite{BPTG} that Kawada's uniqueness statement for generalized universal
covers is incorrect. The situation is remedied in \cite{BPTG} by in effect
defining generalized covering homomorphims to be inverse limits of regular
covering homomorphisms in the traditional sense, so that the kernel of the
resulting map is prodiscrete and hence complete. This led to the theorem
that every compact, connected, metrizable group has a unique generalized
universal cover by a compact group (\cite{BPCG}), which is a product of
connected, simply connected, compact simple Lie groups and universal
solenoids. This \textit{compact universal covering group} is in fact the
inverse limit of all traditional covering homomorphisms of the group by
finite groups, and therefore \cite{BPCG} is a kind of precursor to the
present paper. 

Efforts that do not involve the extra structure of a topological group go
back at least to Fox in the early 1970's (\cite{F}), in which he
strengthened the notion of covering map to what he called an \textit{overlay}%
. Yet finding a \textit{universal} cover in greater generality certainly
requires \textit{weakening }the notion of covering map in natural ways that
retain the most significant properties of covering maps: lifting properties;
a universal property in a categorical sense (and hence uniqueness); the fact
that the universal covering map map is the quotient via a group action by a
(topological) group that generalizes the topologically discrete traditional
fundamental group; and a Galois correspondence between (topologically
closed) normal subgroups of that group and generalized regular covering maps
of the original space.

Two independent efforts proving the existence of generalized universal
covering maps were published in the same year by Fischer-Zastrow (\cite{FZ})
and Berestovskii-Plaut (\cite{BPUU}). The results of \cite{FZ} require that
the space in question be path and locally path connected, whereas the
results of \cite{BPUU} are applicable to many spaces without these
properties. Those spaces include the Warsaw Circle and even some totally
disconnected spaces, but not, for example, solenoids. 

The first (dyadic) solenoid was introduced by Vietoris in 1927 (\cite{V})
and these objects were extensively generalized by McCord in 1965 (\cite{Mc}%
). McCord introduced \textit{solenoidal spaces }as inverse limits of
traditional regular coverings of a Poincar\'{e} space parameterized by the
natural numbers $\mathbb{N}$, and showed that the resulting projection map
is a principle fiber bundle. Solenoidal spaces gained new prominence as a
result of the work of Smale, who showed that solenoids appear as expanding
attractors in dynamical systems in 1967 (\cite{Sm}). Solenoidal spaces and
their generalizations (e.g. matchbox manifolds) continue to play a
significant role in dynamical systems--see for example \cite{H1} and \cite%
{H2}. Given their importance, a generalized covering space theory that
includes solenoidal spaces is desirable. 

In this paper we define the \textit{compact universal cover} $\widehat{\phi }%
:\widehat{X}\rightarrow X$ of \textit{any} continuum (compact, connected,
metrizable space) $X$ to be the projection from the inverse limit $\widehat{X%
}$ of all continua $Y$ such that there is a traditional regular covering map 
$f:Y\rightarrow X$. We call such an $f$ a \textit{cover by a continuum}.
This construction fits into a broader framework of generalized regular
covering maps between uniform spaces (\cite{PQ}), which we show in the
present paper is (unlike traditional regular covering maps between
topological spaces), a category (Theorem \ref{compo}). We will give more
precise details later, but a \textit{generalized regular covering map}
(Definition \ref{guc}) between uniform spaces is the quotient map of a 
\textit{prodiscrete and isomorphic} action of a group of uniform
homeomorphisms, and in particular it is generally not a local homeomorphism.
Equivalently, a generalized regular covering map resolves as an inverse
system of \textit{discrete covers}, which are the uniform analog of
traditional regular covering maps.

To simplify our notation we will denote regular covers of a continuum $X$ by
continua by $f_{i}:X_{i}\rightarrow X/G_{i}$. Of course it is possible (e.g.
with the circle) that for distinct $f_{i},f_{j}$, $X_{i}$ is homeomorphic to 
$X_{j}$, but by assumption if $i\neq j$ then $f_{i}$ and $f_{j}$ are not
equivalent.

Basepoints play an important role in many of our proofs, but in the end our
results are basepoint independent up to equivalence. Therefore we will avoid
the cumbersome \textquotedblleft pointed\textquotedblright\ notation. When
basepoints are involved we will always denote them by $\ast $, and unless
otherwise stated, all maps are assumed to be basepoint preserving. This
approach also naturally leads to stating uniqueness of maps in terms of
equivalence classes relative to an action rather than involving basepoints.
For example, suppose that we have quotients via actions of bijections $\pi
_{1}:X_{1}\rightarrow X=X_{1}/G_{1}$ and $\pi _{2}:X_{2}\rightarrow
X=X_{2}/G_{2}$ and a function $f:X_{1}\rightarrow X_{2}$ with some property
(e.g. uniformly continuous) such that $\pi _{2}\circ f=\pi _{1}$. When we
say $f$ is the unique (mod $G_{2}$) function with that property such that $%
\pi _{2}\circ f=\pi _{1}$, we mean that the set of functions with those
properties is precisely the set of functions of the form $g\circ f$, where $%
g\in G_{2}$.

\begin{definition}
Given two covers by continua $f_{i}:X_{i}\rightarrow X$ and $%
f_{j}:X_{j}\rightarrow X$, we say that $f_{i}\leq f_{j}$ if there is a
continuous onto map $f_{ij}:X_{j}\rightarrow X_{i}$ such that $%
f_{j}=f_{i}\circ f_{ij}$.
\end{definition}

\begin{theorem}[Existence]
\label{dir}For any cotinuum $X$,

\begin{enumerate}
\item The set of all covers $f_{i}:X_{i}\rightarrow X_{i}/G_{i}$ is a
directed set.

\item For all $i,j$ there are unique (mod $G_{i}$) regular covering maps $%
f_{ij}:X_{j}\rightarrow X_{i}$ that are the bonding maps for the inverse
system, and compatible homomorphisms $\theta _{ij}:G_{j}\rightarrow G_{i}$,
i.e. for all $i,j$ and $g\in G_{j}$, $\theta _{ij}(g)\circ
f_{ij}=f_{ij}\circ g$.

\item The projection $\widehat{\phi }:\widehat{X}=\underleftarrow{\lim }%
X_{i}\rightarrow X=\widehat{X}/\pi _{P}(X)$ is a generalized regular
covering map with profinite deck group $\pi _{P}(X)=\underleftarrow{\lim }%
G_{i}$, and $\widehat{X}$ is a continuum.
\end{enumerate}
\end{theorem}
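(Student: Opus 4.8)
The plan is to take the three parts in order, letting the fiber-product construction drive (1), the classical uniqueness-of-lifts lemma for regular covers drive (2), and standard inverse-limit inheritance drive (3), with the only genuinely delicate points being the connectedness/regularity of the common refinement and the metrizability of $\widehat{X}$.

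I would begin with the observation that governs everything: since each $X_{i}$ is compact and $f_{i}$ is a local homeomorphism onto the connected space $X$, each fiber is discrete and compact, hence \emph{finite}. Thus every cover by a continuum is finite-sheeted, and each $G_{i}$ is a finite group with $|G_{i}|$ equal to the number of sheets. For directedness, given $f_{i}$ and $f_{j}$ I would form the fiber product $P=X_{i}\times _{X}X_{j}=\{(x,y):f_{i}(x)=f_{j}(y)\}$, which is compact (closed in $X_{i}\times X_{j}$ since $X$ is Hausdorff) and on which the finite group $G_{i}\times G_{j}$ acts freely, fiber-preservingly, and transitively on each fiber of $\pi \colon P\rightarrow X$, with $P/(G_{i}\times G_{j})=X$. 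Letting $Z$ be the component of $P$ through the basepoint, the claim is that $\pi |_{Z}\colon Z\rightarrow X$ is again a regular cover by a continuum that dominates both $f_{i}$ and $f_{j}$ via the two coordinate projections, so that $f_{i}\leq \pi|_{Z}$ and $f_{j}\leq \pi|_{Z}$. Compactness, connectedness and metrizability of $Z$ are immediate; once $\pi |_{Z}$ is known to be a surjective covering map, regularity follows from the finite-group action, since the basepoint fiber lies in the orbit $(G_{i}\times G_{j})\cdot Z$ and the stabilizer $H=\mathrm{Stab}(Z)\leq G_{i}\times G_{j}$ then acts freely on $Z$ with $Z/H=X$, exhibiting $H$ as the deck group.

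For (2), when $f_{i}\leq f_{j}$ the map $f_{ij}$ with $f_{j}=f_{i}\circ f_{ij}$ exists by definition of $\leq $, and as a map over $X$ between covers of $X$ it is itself a finite covering map. To build $\theta _{ij}$ I would fix basepoints and note that, for $g\in G_{j}$, both $f_{ij}$ and $f_{ij}\circ g$ are lifts of $f_{j}$ through the regular cover $f_{i}$; uniqueness of such lifts yields a unique $\theta _{ij}(g)\in G_{i}$ with $\theta _{ij}(g)\circ f_{ij}=f_{ij}\circ g$. The same uniqueness shows that $\theta _{ij}$ is a homomorphism, that $f_{ij}$ is regular with deck group $\ker \theta _{ij}\trianglelefteq G_{j}$, that $f_{ij}$ is unique mod $G_{i}$, and that the cocycle identities $f_{ik}=f_{ij}\circ f_{jk}$ and $\theta _{ik}=\theta _{ij}\circ \theta _{jk}$ hold, so that $(X_{i},f_{ij})$ and $(G_{i},\theta _{ij})$ are genuine inverse systems.

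For (3), set $\widehat{X}=\varprojlim X_{i}$ and $\pi _{P}(X)=\varprojlim G_{i}$. As an inverse limit of finite groups, $\pi _{P}(X)$ is profinite, hence prodiscrete, and the compatible actions of the $G_{i}$ assemble to an action on $\widehat{X}$ with $\widehat{X}/\pi _{P}(X)=X$; since $\widehat{\phi }$ is by construction the inverse limit of the regular (discrete) covers $f_{i}$, the equivalent ``inverse system of discrete covers'' characterization of Definition \ref{guc} identifies it as a generalized regular covering map with deck group $\pi _{P}(X)$. Compactness of $\widehat{X}$ (a closed subspace of $\prod X_{i}$) and connectedness (an inverse limit of continua with surjective bonding maps) are standard. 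I expect two points to require the real care. The first is showing inside (1) that $Z$ is a genuinely surjective covering, which can fail for restrictions of covers to components when $X$ is not locally connected; here I would lean on finiteness of the fibers and deck groups together with the uniform/discrete-cover framework of \cite{PQ} and \cite{BPUU}, in which finite regular covers of $X$ are classified by finite quotients of a fundamental group and common refinements correspond to the fiber product of those quotients. The second, and the genuinely novel obstacle, is metrizability of $\widehat{X}$: the directed set of all covers need not be countable, and an inverse limit of metrizable compacta over an uncountable index set can fail to be metrizable. This is exactly where the $n!$ bound of the abstract is needed --- with at most $n!$ non-equivalent $n$-fold covers and only countably many degrees, there are only countably many covers up to equivalence, so the system has a countable cofinal subsystem whose limit is homeomorphic to $\widehat{X}$ and is metrizable.
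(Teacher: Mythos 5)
Your overall architecture matches the paper's --- finiteness of fibers, a common refinement for directedness, unique lifting for the bonding maps and $\theta_{ij}$, and a countable cofinal subsystem via the $n!$ bound for surjectivity and metrizability of $\widehat{X}$ --- but your Part 1 takes a genuinely different route. The paper never forms the fiber product $X_i\times_X X_j$; instead it invokes Theorem \ref{quoteq} to realize $f_i$ and $f_j$ as induced quotients $X_E/K_i$ and $X_E/K_j$ for finite-index normal subgroups $K_i,K_j$ of $\pi_E(X)$, sets $K=K_i\cap K_j$, proves $X_E/K$ is compact by a Cauchy-sequence argument, and restricts to the basepoint component via Proposition \ref{cpt}. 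The two constructions produce equivalent refinements (the basepoint component of the fiber product is, up to equivalence, the basepoint component of $X_E/(K_i\cap K_j)$), and your version buys a cheaper compactness proof ($P$ is closed in $X_i\times X_j$) at the cost of having to verify that $G_i\times G_j$ acts discretely and isomorphically on $P$ so that the paper's component machinery applies. What you cannot avoid in either version --- and what you correctly flag but do not supply --- is the content of Proposition \ref{cpt}: that for a discrete cover of a connected compactum, each component of the total space maps \emph{onto} the base. Note that this is needed in more places than you indicate: it is what shows $P$ has only finitely many components, hence that $Z$ is \emph{open} in $P$ and $\pi|_Z$ is a covering map at all (a compact metric space can have infinitely many components, so ``immediate'' is too strong); it is needed again for surjectivity of the coordinate projections $p_i|_Z\colon Z\to X_i$; and its proof runs through the General Chain Lifting Lemma \ref{cl}, not through a classification of finite covers, which in this generality is itself Theorem \ref{quoteq} of this paper rather than something available in the cited literature. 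Two smaller omissions: the paper also proves antisymmetry of $\leq$ up to equivalence (via uniqueness of basepoint-preserving lifts, Proposition \ref{uni!}), which is part of the ``directed set'' claim; and regularity of $\pi|_Z$ with deck group the stabilizer of $Z$ rests on the Property ST argument of Lemma \ref{cc}, which you assert but do not check. Parts 2 and 3 of your proposal are essentially the paper's argument (Theorem \ref{eqone}, Proposition \ref{uni!}, Theorem 44 of \cite{PQ}, and the countable cofinal subsystem from Theorem \ref{factor}) and are correct as sketched.
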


We call $\pi _{P}(X)$ the \textit{profinite fundamental group of }$X$. When $%
\pi _{P}(X)=1$ we say that $X$ is \textit{compactly simply connected}. Note
that for a compact manifold $M$, $\widehat{M}$ is by definition a
\textquotedblleft weak solenoid\textquotedblright --see \cite{HL}. For
example, when $X$ is the circle, $\widehat{X}$ is by definition the
so-called universal solenoid.

\begin{theorem}[Galois Correspondence]
\label{corres}If $f:Y\rightarrow X=Y/G$ is a generalized regular covering
map by a continuum then $f$ is equivalent to the induced quotient $\pi :%
\widehat{X}/K\rightarrow \left( \widehat{X}/K\right) /\left( \pi
_{P}(X)/K\right) $ for some closed normal subgroup $K$ of $\pi _{P}(X)$.
Conversely any such induced quotient for a closed normal subgroup of $\pi
_{P}(X)$ is a generalized regular covering map of $X$ by a continuum.
\end{theorem}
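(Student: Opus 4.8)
The plan is to realize the correspondence explicitly through the assignment $K\mapsto\bigl(\widehat X/K\to X\bigr)$ and to prove that it is a bijection, up to equivalence, between closed normal subgroups of $\pi_P(X)$ and generalized regular covers of $X$ by continua, dispatching the (formal) converse first and then building the forward direction on the resolution of $f$ into discrete covers supplied by Theorem~\ref{dir}. For the converse, let $K\trianglelefteq\pi_P(X)$ be closed and normal. Restricting the prodiscrete isomorphic action of $\pi_P(X)=\varprojlim G_i$ on $\widehat X$ to $K$ yields a prodiscrete isomorphic action of $K$, so by the framework of \cite{PQ} the quotient $\widehat X\to\widehat X/K$ is a generalized regular cover; since $\widehat X$ is a continuum and $K$ acts by homeomorphisms with closed orbits, $\widehat X/K$ is again a continuum. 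Normality of $K$ lets $\pi_P(X)/K$ act on $\widehat X/K$, and this induced action is prodiscrete (a quotient of a prodiscrete group by a closed normal subgroup is prodiscrete) and isomorphic, with orbit space $(\widehat X/K)/(\pi_P(X)/K)=\widehat X/\pi_P(X)=X$. Hence $\widehat X/K\to X$ is a generalized regular covering map by a continuum, as required.

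For the forward direction, let $f:Y\to X=Y/G$ be a generalized regular cover by a continuum. I first resolve $f$ as an inverse limit $Y=\varprojlim_{\lambda\in\Lambda}Y_\lambda$ of discrete covers of $X$. Since $Y$ is a continuum each $Y_\lambda$ is a compact connected quotient of $Y$, and a discrete cover of a compact space has finite fibers; thus each $Y_\lambda$ is a finite regular cover by a continuum, hence equivalent to one of the $X_{i(\lambda)}$ from Theorem~\ref{dir}, so that $\Lambda$ is realized as a directed subset of the index set. Passing to deck groups, the restriction map $r:\pi_P(X)=\varprojlim_i G_i\to\varprojlim_\lambda G_{i(\lambda)}=G$ is a continuous homomorphism of profinite groups, surjective by compactness; set $K:=\ker r$, a closed normal subgroup. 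The coordinate projection $p:\widehat X=\varprojlim_i X_i\to\varprojlim_\lambda X_{i(\lambda)}=Y$ is surjective and intertwines the $\pi_P(X)$-action on $\widehat X$ with the $G$-action on $Y$ via $r$, so $K$ acts trivially along $p$ and $p$ factors through a continuous surjection $\widehat X/K\to Y$.

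The heart of the argument is to show that this map is a bijection, equivalently that the fibers of $p$ are exactly the $K$-orbits. One inclusion is immediate from $K=\ker r$. For the reverse, if $x,x'\in\widehat X$ satisfy $p(x)=p(x')$ then $\widehat\phi(x)=\widehat\phi(x')$, and regularity of $\widehat\phi$ produces $g\in\pi_P(X)$ with $gx=x'$; since $p(gx)=r(g)\cdot p(x)=p(x)$ and the deck action of $G$ on $Y$ is free (isomorphic actions are free), we get $r(g)=1$, i.e. $g\in K$. Thus $\widehat X/K\to Y$ is a continuous bijection of compact Hausdorff spaces, hence a homeomorphism over $X$, and $f$ is equivalent to $\widehat X/K\to(\widehat X/K)/(\pi_P(X)/K)$ with $\pi_P(X)/K\cong G$. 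Uniqueness of $K$ up to equality, and the basepoint independence of the conclusion, follow because $K$ is recovered intrinsically as the kernel of the induced homomorphism $\pi_P(X)\to G$.

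I expect the main obstacle to lie in the bookkeeping of the forward direction inside the uniform/prodiscrete category: verifying cleanly that the resolution of $f$ identifies each factor with a cover $X_{i(\lambda)}$ of the universal system, that the restriction $r$ on profinite deck groups and the projection $p$ are both surjective (via compactness of the finite groups $G_i$), and that the fiber-equals-orbit computation is legitimate, which rests on the freeness of isomorphic actions and the transitivity of the deck action of $\pi_P(X)$ on the fibers of the regular map $\widehat\phi$. The category statement of Theorem~\ref{compo} is what guarantees that the composite and factor maps arising this way are genuine generalized regular covering maps rather than merely continuous factorings.
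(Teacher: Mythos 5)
Your proposal is correct and follows essentially the same route as the paper: the converse via the prodiscrete quotient framework (completeness of the closed subgroup $K$ giving closed orbits and a Hausdorff, hence metrizable, quotient), and the forward direction by resolving $f$ into discrete covers, identifying each factor with a member of the universal inverse system, and taking $K$ to be the kernel of the induced projection of deck groups. The only difference is cosmetic: where you conclude by a direct compactness argument (fibers of $p$ equal $K$-orbits, then a continuous bijection of compact Hausdorff spaces is a homeomorphism), the paper first treats the discrete case explicitly and then invokes its general inverse-system machinery (Theorem \ref{snark}.6) for the limit step.
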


\begin{theorem}[Universal Property]
\label{main}If $f:Y\rightarrow X=Y/G$ is a generalized regular covering map
by a continuum then there is a unique (mod $G$) generalized regular covering
map $f_{L}:\widehat{X}\rightarrow Y$ called the lift of $f$ such that $%
\widehat{\phi }=f\circ f_{L}$.
\end{theorem}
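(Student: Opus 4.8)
The plan is to deduce the Universal Property from the Galois Correspondence (Theorem \ref{corres}) together with the fact that generalized regular covering maps compose (Theorem \ref{compo}); the only genuinely new work is the uniqueness clause, so I will treat existence quickly and concentrate on uniqueness.

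For existence, apply Theorem \ref{corres} to $f$: there is a closed normal subgroup $K$ of $\pi _{P}(X)$ and an equivalence $h:\widehat{X}/K\rightarrow Y$ (a basepoint-preserving uniform homeomorphism) with $f\circ h=\pi$, where $\pi :\widehat{X}/K\rightarrow X$ is the induced quotient. Since $K$ is a closed subgroup of the prodiscrete group $\pi _{P}(X)$, which acts isomorphically and prodiscretely on $\widehat{X}$ via $\widehat{\phi }$, the restricted action of $K$ is again isomorphic and prodiscrete, so the quotient $q:\widehat{X}\rightarrow \widehat{X}/K$ is itself a generalized regular covering map. I then set $f_{L}:=h\circ q:\widehat{X}\rightarrow Y$. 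Because $\widehat{X}$ is a continuum (Theorem \ref{dir}), $\widehat{X}/K$ is a continuum, and $f_{L}$, being the composite of a generalized regular covering map with an equivalence, is a generalized regular covering map by a continuum (Theorem \ref{compo}). To verify the defining identity, I observe that composing the quotient by $K$ with the quotient by $\pi _{P}(X)/K$ recovers the quotient by all of $\pi _{P}(X)$, i.e.\ under the canonical identification $(\widehat{X}/K)/(\pi _{P}(X)/K)=\widehat{X}/\pi _{P}(X)=X$ we have $\pi \circ q=\widehat{\phi }$; hence $f\circ f_{L}=f\circ h\circ q=\pi \circ q=\widehat{\phi }$.

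For uniqueness (mod $G$), first note that each $g\circ f_{L}$ with $g\in G$ is again a valid lift: $g$ is a uniform homeomorphism, so $g\circ f_{L}$ is a generalized regular covering map, and since $f$ is the quotient $Y\rightarrow Y/G$ we have $f\circ g=f$, whence $f\circ (g\circ f_{L})=f\circ f_{L}=\widehat{\phi }$. Conversely, suppose $f_{L}'$ is any generalized regular covering map with $f\circ f_{L}'=\widehat{\phi }=f\circ f_{L}$. The basepoints $f_{L}(\widehat{\ast })$ and $f_{L}'(\widehat{\ast })$ both lie in $f^{-1}(\ast )$, on which $G$ acts transitively because $f$ is a regular (normal) cover; I choose $g\in G$ with $f_{L}'(\widehat{\ast })=g(f_{L}(\widehat{\ast }))$. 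Then $g^{-1}\circ f_{L}'$ and $f_{L}$ are two lifts of $\widehat{\phi }$ through $f$ that agree at the basepoint, and the unique lifting property of generalized regular covering maps, applied to the uniformly continuous map $\widehat{\phi }$ on the connected space $\widehat{X}$, forces them to coincide, giving $f_{L}'=g\circ f_{L}$.

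I expect the main obstacle to be exactly this last step: justifying that two basepoint-agreeing lifts of $\widehat{\phi }$ must be equal. Unlike the classical case, $f$ is generally not a local homeomorphism, so the usual argument that the locus where two lifts agree is open and closed is unavailable and must be replaced by the unique lifting property supplied by the prodiscrete, isomorphic action framework of \cite{PQ}. The delicate point is to confirm that this lifting property applies to an arbitrary uniformly continuous map out of a connected domain, and not merely to paths, so that connectedness of $\widehat{X}$ suffices to pin down the lift. Existence, by contrast, is essentially a formal consequence of Theorems \ref{corres} and \ref{compo} and should require no new estimates.
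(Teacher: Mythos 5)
Your proposal is correct and follows essentially the same route as the paper: existence is read off from the Galois Correspondence (the paper simply takes $f_{L}:\widehat{X}\rightarrow\widehat{X}/K_{f}$), and uniqueness mod $G$ is exactly the content of Proposition \ref{uni!}, which is the ``unique lifting property for uniformly continuous maps out of a chain connected domain'' that you correctly identify as the delicate step. That proposition is proved in the paper by first treating discrete covers via the General Chain Lifting Lemma and then passing to the resolution for the prodiscrete case, so the gap you flag is already closed there.
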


\begin{theorem}[Uniqueness]
\label{unique}$\widehat{X}$ is compactly simply connected, and if $%
f:Y\rightarrow X=Y/G$ is a generalized universal cover by a compactly simply
connected continuum $Y$, then $f$ is equivalent to $\widehat{\phi }$.
\end{theorem}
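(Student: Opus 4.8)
The plan is to prove both assertions by the standard ``lift and compose'' technique for universal objects, relying on the Universal Property (Theorem \ref{main}) and on the fact (Theorem \ref{compo}) that generalized regular covering maps compose. Throughout I use that a generalized regular covering map is a surjection of continua (a quotient map) and that an injective continuous surjection between continua is a homeomorphism. I also use from Theorem \ref{dir} that $\widehat{X}$ is a continuum.

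First I would establish that $\widehat{X}$ is compactly simply connected. Suppose not; then $\pi_P(\widehat{X})\neq 1$, and since this group is an inverse limit of the finite deck groups of covers of $\widehat{X}$ by continua (with surjective bonding maps), some such cover $g:Z\rightarrow \widehat{X}=Z/H$ has $H\neq 1$. Since $g$ is in particular a discrete cover, the composite $\widehat{\phi}\circ g:Z\rightarrow X$ is a generalized regular covering map by the continuum $Z$ (Theorem \ref{compo}), so Theorem \ref{main} supplies a lift, namely a generalized regular covering map $h:\widehat{X}\rightarrow Z$ with $\widehat{\phi}=(\widehat{\phi}\circ g)\circ h=\widehat{\phi}\circ (g\circ h)$. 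The self-map $g\circ h$ of $\widehat{X}$ thus satisfies $\widehat{\phi}\circ (g\circ h)=\widehat{\phi}$, so by the uniqueness clause of Theorem \ref{main} applied to $\widehat{\phi}$ itself (for which $\mathrm{id}_{\widehat{X}}$ is a lift) we get $g\circ h\in \pi_P(X)$; in particular $g\circ h$ is a homeomorphism. Hence $h$ is injective, and being a surjective generalized regular covering map between continua it is a homeomorphism. Then $g=(g\circ h)\circ h^{-1}$ is a homeomorphism, contradicting $H\neq 1$. Therefore $\pi_P(\widehat{X})=1$.

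For the uniqueness statement, let $f:Y\rightarrow X=Y/G$ be a generalized regular covering map with $Y$ a compactly simply connected continuum. By Theorem \ref{main} there is a generalized regular covering map $f_L:\widehat{X}\rightarrow Y$, the lift of $f$, with $\widehat{\phi}=f\circ f_L$. It remains to show $f_L$ is a homeomorphism, for then $f_L$ is the desired equivalence $f\sim \widehat{\phi}$. Here I would use that $f_L$ exhibits the continuum $\widehat{X}$ as a generalized regular cover of $Y$: by definition it resolves as an inverse system of discrete (traditional regular) covers $Y_\beta \rightarrow Y$ with $\widehat{X}=\underleftarrow{\lim }\,Y_\beta$. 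Each $Y_\beta$ is a quotient of the compact $\widehat{X}$, hence a continuum, so each $Y_\beta \rightarrow Y$ is a cover of $Y$ by a continuum; since $\pi_P(Y)=1$ there are no nontrivial such covers, and every $Y_\beta \rightarrow Y$ is a homeomorphism. Therefore $\widehat{X}=\underleftarrow{\lim }\,Y_\beta \cong Y$, so $f_L$ is a homeomorphism and $f\sim \widehat{\phi}$.

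The main obstacle I anticipate is the last step: converting ``compactly simply connected'' (a statement about traditional finite covers, encoded as $\pi_P(Y)=1$) into the triviality of the generalized covering map $f_L$. This requires knowing that the discrete covers appearing in a resolution of $f_L$ really are covers of $Y$ by continua, so that the hypothesis $\pi_P(Y)=1$ applies to them; compactness of $\widehat{X}$ is exactly what guarantees this. A clean alternative, available if one reads ``generalized universal cover'' as carrying the universal mapping property, is fully symmetric to the first part: the universal property of $Y$ applied to $\widehat{\phi}$ produces $\ell:Y\rightarrow \widehat{X}$ with $f=\widehat{\phi}\circ \ell$, whence $\ell\circ f_L\in \pi_P(X)$ forces $f_L$ to be injective and thus a homeomorphism, avoiding any analysis of resolutions.
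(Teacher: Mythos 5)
Your proposal is correct and follows essentially the same route as the paper: for compact simple connectedness you compose a putative nontrivial cover of $\widehat{X}$ with $\widehat{\phi}$, invoke Theorem \ref{compo} and then the uniqueness clause of Theorem \ref{main} to force the composite self-map of $\widehat{X}$ to be a deck transformation, yielding the same contradiction; and for uniqueness you lift $f$ via Theorem \ref{main} and use $\pi_P(Y)=1$ to kill the resolution of $f_L$ into discrete covers by continua. Your second part merely spells out the details that the paper leaves terse (that each stage of the resolution is a cover of $Y$ by a continuum because it is a quotient of the compact $\widehat{X}$), which is exactly the paper's intended argument.
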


\begin{corollary}
\label{ecor}If $f:Y\rightarrow X=Y/G$ is a generalized regular covering map
by a continuum then there is a unique (mod $\pi _{P}(X)$) homeomorphism $h:%
\widehat{X}\rightarrow \widehat{Y}$ such that $\widehat{\phi }_{X}=f\circ 
\widehat{\phi }_{Y}\circ h$, where $\widehat{\phi }_{X}$ and $\widehat{\phi }%
_{Y}$ are the compact universal covering maps of $X$ and $Y$, respectively.
\end{corollary}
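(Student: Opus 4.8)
The plan is to produce $h$ by combining the Universal Property (Theorem \ref{main}) with the Uniqueness Theorem (Theorem \ref{unique}), and then to extract the ``mod $\pi_P(X)$'' uniqueness directly from the fact that $\pi_P(X)$ is the deck group of $\widehat{\phi}_X$. The corollary should carry essentially no content beyond those two theorems; the work is purely in assembling them and checking hypotheses.

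First I would apply Theorem \ref{main} to the given generalized regular covering map $f:Y\rightarrow X=Y/G$. This yields the lift $f_{L}:\widehat{X}\rightarrow Y$, a generalized regular covering map satisfying $\widehat{\phi}_{X}=f\circ f_{L}$ and unique mod $G$. The crucial observation is that $f_{L}$ is itself a generalized universal cover of $Y$: its total space $\widehat{X}$ is a continuum by Theorem \ref{dir} and is compactly simply connected by Theorem \ref{unique} (equivalently $\pi_{P}(\widehat{X})=1$), and $f_{L}$ is a generalized regular covering map. Thus $f_{L}$ satisfies the hypotheses of Theorem \ref{unique} with base space $Y$.

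Next I would invoke Theorem \ref{unique} for the base $Y$. Since both $f_{L}:\widehat{X}\rightarrow Y$ and the compact universal covering map $\widehat{\phi}_{Y}:\widehat{Y}\rightarrow Y$ are generalized universal covers of $Y$ by compactly simply connected continua, Theorem \ref{unique} forces $f_{L}$ to be equivalent to $\widehat{\phi}_{Y}$. Equivalence supplies a homeomorphism $h:\widehat{X}\rightarrow \widehat{Y}$ with $\widehat{\phi}_{Y}\circ h=f_{L}$, and substituting into $\widehat{\phi}_{X}=f\circ f_{L}$ gives $\widehat{\phi}_{X}=f\circ \widehat{\phi}_{Y}\circ h$, which is exactly the asserted factorization. (Alternatively one could note that $f\circ \widehat{\phi}_{Y}$ is a generalized regular covering map of $X$ by the compactly simply connected $\widehat{Y}$ via the category property Theorem \ref{compo}, and apply Theorem \ref{unique} to $X$ directly; either route produces $h$.) This settles existence.

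For uniqueness mod $\pi_{P}(X)$, suppose $h_{1},h_{2}:\widehat{X}\rightarrow \widehat{Y}$ both satisfy the factorization. Setting $g=h_{1}^{-1}\circ h_{2}:\widehat{X}\rightarrow \widehat{X}$, so that $h_{2}=h_{1}\circ g$, I would compute $\widehat{\phi}_{X}=f\circ \widehat{\phi}_{Y}\circ h_{2}=\left( f\circ \widehat{\phi}_{Y}\circ h_{1}\right) \circ g=\widehat{\phi}_{X}\circ g$. Hence $g$ preserves every fiber of $\widehat{\phi}_{X}$; conversely, for any $g\in \pi_{P}(X)$ the map $h_{1}\circ g$ again satisfies the factorization since $\widehat{\phi}_{X}\circ g=\widehat{\phi}_{X}$. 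So the solution set is exactly $\{h\circ g:g\in \pi_{P}(X)\}$. The one delicate point, which I expect to be the main (and only) obstacle, is converting $\widehat{\phi}_{X}\circ g=\widehat{\phi}_{X}$ into $g\in \pi_{P}(X)$: here I would use that $\widehat{\phi}_{X}$ is the quotient by the prodiscrete, isomorphic action of $\pi_{P}(X)$ and that such maps enjoy unique lifting, so a self-homeomorphism commuting with $\widehat{\phi}_{X}$ is determined by its value at the basepoint and therefore lies in the deck group $\pi_{P}(X)$. This gives uniqueness mod $\pi_{P}(X)$ and completes the argument.
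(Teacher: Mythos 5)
Your proof is correct, but it takes a genuinely different route from the paper's. The paper never invokes Theorem \ref{unique} here: it forms $g:=f\circ \widehat{\phi }_{Y}$, checks via Theorem \ref{compo} that $g$ is a generalized regular covering map of $X$, takes $h$ to be the lift $g_{L}$ supplied by Theorem \ref{main}, and then proves $g_{L}$ is a homeomorphism by producing a second lift $(f_{L})_{L}:\widehat{Y}\rightarrow \widehat{X}$ and observing that $(f_{L})_{L}\circ g_{L}$ lifts $\widehat{\phi }_{X}$ over itself, hence is the identity mod $\pi _{P}(X)$, which forces $g_{L}$ to be injective and therefore a uniform homeomorphism. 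You instead note that $f_{L}:\widehat{X}\rightarrow Y$ is a generalized regular covering map of the continuum $Y$ by the compactly simply connected continuum $\widehat{X}$, and let Theorem \ref{unique} (applied with base $Y$, together with Corollary \ref{eqeq}) hand you the homeomorphism $h$ with $\widehat{\phi }_{Y}\circ h=f_{L}$ directly. Both are valid; your route is shorter because the double-lift trick is already buried inside the proof of Theorem \ref{unique}, while the paper's is self-contained modulo Theorems \ref{main} and \ref{compo}. A genuine point in your favor: the paper's written proof never actually addresses the ``unique (mod $\pi _{P}(X)$)'' clause --- the uniqueness it quotes from Theorem \ref{main} is mod a group acting on the codomain of the lift, not mod $\pi _{P}(X)$ acting on $\widehat{X}$ --- whereas your argument (reduce to a self-map $g$ of $\widehat{X}$ with $\widehat{\phi }_{X}\circ g=\widehat{\phi }_{X}$ and apply Proposition \ref{uni!} against the identity to conclude $g\in \pi _{P}(X)$) is exactly what is needed and correctly reads ``mod $\pi _{P}(X)$'' as precomposition on the domain.
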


When $X$ is a compact metrizable Poincar\'{e} space, following (\cite{Mc}),
the universal cover $\widetilde{X}$ embeds as a dense subgroup of what we
are now calling $\widehat{X}$ (see also Theorem \ref{snark}). More
generally, given any injective homomorphism $\eta $ of $\pi _{1}(X)$ into a
profinite group $P=\underleftarrow{P_{i}}$, we obtain a generalized regular
covering map $f:Y\rightarrow X=Y/P$ by taking the compact covers
corresponding to the kernels of $\eta _{i}:=\pi ^{i}\circ \eta $, where $\pi
^{i}$ is the projection onto the finite group $P_{i}$. However, Theorem \ref%
{unique} gives us something new: this embedding of $\widetilde{X}$ into a
compactly simply connected continuum is independent of the particular space $%
X$ that $\widetilde{X}$ covers.

\begin{corollary}[Profinite Compactification]
\label{PC}Suppose that $Y$ is a metrizable simply connected Poincar\'{e}
space such that there is a regular covering map $f:Y\rightarrow X$ for some
compact space $X$. Then $Y$ embeds as a dense subgroup in a unique compactly
simply connected continuum $\overline{Y}$.
\end{corollary}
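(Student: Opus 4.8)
The plan is to identify $\overline{Y}$ with the compact universal cover $\widehat{X}$ of $X$ and to derive uniqueness from Theorem \ref{unique}. First I would check that the hypotheses allow us to apply the existence theorem to $X$ itself. Since $f:Y\rightarrow X$ is a covering map and $Y$ is metrizable, $f$ is a local homeomorphism, so $X$ is locally metrizable; being compact Hausdorff it is paracompact, and Smirnov's metrization theorem then makes $X$ metrizable. As a continuous image of the connected $Y$ it is connected, so $X$ is a continuum. Local path connectedness of $Y$ passes to $X$ through the local homeomorphism $f$, and because $Y$ is simply connected every loop in an evenly covered neighborhood lifts, through a single sheet, to a loop that bounds in $Y$ and hence bounds in $X$; thus $X$ is semilocally simply connected. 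Therefore $X$ is a compact metrizable Poincar\'{e} space and $Y$ is equivalent to its universal cover $\widetilde{X}$, with deck group $\Gamma=\pi_1(X)$ and $X=Y/\Gamma$.

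For existence, set $\overline{Y}:=\widehat{X}$. By Theorem \ref{dir}, $\widehat{X}$ is a continuum and $\widehat{\phi}:\widehat{X}\rightarrow X$ is a generalized regular covering map, and by Theorem \ref{unique} the space $\widehat{X}$ is compactly simply connected. The McCord-type embedding of Theorem \ref{snark} then provides a dense topological embedding $\widetilde{X}\hookrightarrow \widehat{X}$ compatible with the projections to $X$: the fiber of $\widehat{\phi}$ over the basepoint is $\pi_P(X)=\varprojlim \Gamma/N_i$, the inverse limit of the finite quotients of $\Gamma$, and the image of the $\Gamma$-fiber of $Y$ is the canonical dense copy of $\Gamma$ inside it. This gives the required dense embedding of $Y=\widetilde{X}$ into the compactly simply connected continuum $\overline{Y}=\widehat{X}$.

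The real content of the corollary is uniqueness and, with it, the independence from the chosen $X$. Suppose $Y$ also embeds densely in some compactly simply connected continuum $Z$. The plan is to realize $Z$ itself as a generalized universal cover of $X$ and then invoke Theorem \ref{unique}. One would first show that $f:Y\rightarrow X$ is uniformly continuous for the uniformity $Y$ inherits from the compact metric space $Z$, so that density of $Y$ in $Z$ extends $f$ to a continuous surjection $g:Z\rightarrow X$. The deck transformations $\Gamma\subset\mathrm{Homeo}(Y)$ are uniform homeomorphisms and likewise extend to homeomorphisms of $Z$; let $G$ be the closure of the extended group in the group of uniform homeomorphisms of $Z$. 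One then shows that the action of $G$ is prodiscrete and isomorphic with $Z/G=X$, so that $g$ is a generalized regular covering map and $Z$ is a generalized universal cover of $X$ by a compactly simply connected continuum. Theorem \ref{unique} forces $g$ to be equivalent to $\widehat{\phi}$, whence $Z$ is homeomorphic to $\widehat{X}=\overline{Y}$ over $X$ and compatibly with the inclusion of $Y$.

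The main obstacle is this last step: verifying that the abstractly extended data $(Z,G,g)$ is genuinely a generalized regular covering map, i.e. that $G$ acts prodiscretely and isomorphically and that $Z/G$ is exactly $X$, rather than a mere continuous quotient. Concretely this amounts to showing that the uniformity which any compactly simply connected compactification induces on $Y$ must coincide with the canonical one generated by the finite covers lying between $Y$ and $X$, so that the completion is forced to be $\widehat{X}$; this is also precisely where the independence from the particular $X$ enters, since Theorem \ref{unique} collapses all such covers of all such base spaces onto the single object $\widehat{X}$. (When $Y$ carries a topological group structure for which the deck transformations act as translations, all the extensions above are homomorphisms and the embedding is as a dense subgroup, matching the statement.)
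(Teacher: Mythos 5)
Your existence half follows the route the paper intends: the paper in fact gives no separate proof of this corollary, relying on the paragraph that precedes it, which identifies $Y$ with $\widetilde{X}$ for a compact metrizable Poincar\'{e} space $X$ and invokes the McCord-type statements packaged in Theorem \ref{snark} to obtain a dense copy of $Y$ in $\widehat{X}$; your verification that $X$ is a compact metrizable Poincar\'{e} space and that $Y\cong\widetilde{X}$ is correct and matches that intent. Two points in your existence step are nevertheless overstated. First, by Theorem \ref{snark}.4 the injectivity of the natural map $\widetilde{X}\rightarrow\widehat{X}$ is \emph{equivalent} to $\bigcap_{\alpha}K_{\alpha}=\{1\}$, which for the profinite system means $\pi_{1}(X)$ is residually finite; you assert an injection without checking this hypothesis. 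Second, even when injective the map is not a topological embedding for the subspace topology: for $X=S^{1}$ the map $\mathbb{R}\rightarrow\Sigma$ is uniformly continuous and injective with dense image but, as Example \ref{solenoid} emphasizes, not bi-uniformly continuous, so ``dense topological embedding'' is too strong and the corollary's ``embeds as a dense subset'' must be read as a continuous injection onto a dense subset.

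The genuine gap is in uniqueness. You interpret ``unique'' as the assertion that \emph{every} compactly simply connected continuum containing a dense copy of $Y$ is homeomorphic over $X$ to $\widehat{X}$. That is considerably stronger than what the paper claims --- the sentence immediately before the corollary makes clear that the point is independence of the compactification from the particular compact space $X$ that $Y$ regularly covers --- and your argument for it is not a proof: an arbitrary dense injection $Y\hookrightarrow Z$ gives no reason for $f$ to be uniformly continuous in the uniformity $Y$ inherits from $Z$, for the deck transformations to extend to $Z$, or for the closure $G$ of the extended group to act isomorphically and prodiscretely with $Z/G=X$. You name exactly this as ``the main obstacle'' and leave it unresolved, so the half of the corollary that carries the content is not established. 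The route consistent with the paper is to consider two regular covering maps $f_{i}:Y\rightarrow X_{i}$ by compact spaces, observe that each $\widehat{\phi}_{i}:\widehat{X_{i}}\rightarrow X_{i}$ is a generalized regular covering map by a compactly simply connected continuum receiving $Y$ densely, and compare the two using Theorem \ref{unique} together with Theorem \ref{compo} and Corollary \ref{ecor}; uniqueness is uniqueness among compactifications arising from such covering data, not among all conceivable dense compactifications of $Y$.
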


\begin{theorem}
\label{lifting}Let $f:Z\rightarrow X$ be a continuous function such that $X$
is a continuum and $Z$ is a simply connected Poincar\'{e} space. Then there
is a unique (mod $\pi _{P}(X)$) uniformly continuous function $%
f_{L}:Z\rightarrow \widehat{X}$ called the lift of $f$ such that $f=\widehat{%
\phi }\circ f_{L}$.
\end{theorem}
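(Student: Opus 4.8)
The plan is to exploit the description of $\widehat{X}$ from Theorem~\ref{dir} as the inverse limit $\varprojlim X_i$ of the regular covers $f_i\colon X_i\to X$ by continua, together with the elementary fact that a uniformly continuous map into an inverse limit of uniform spaces is the same datum as a compatible family of uniformly continuous maps into the factors. Thus I would first reduce the problem to producing, for each $i$, a lift $\tilde f_i\colon Z\to X_i$ of $f$ through $f_i$, and then check that the family $\{\tilde f_i\}$ is compatible with the bonding maps $f_{ij}$. Here it is important that each $f_i$ is a \emph{finite}-sheeted regular covering map: since $X_i$ is a continuum and hence compact, the discrete fiber $f_i^{-1}(\ast)$ is finite, and the deck groups $G_i$ are finite.

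To produce the level-wise lifts I would invoke the classical lifting criterion for covering spaces. Because $Z$ is a Poincar\'{e} space it is connected and locally path connected, and because it is simply connected we have $f_\ast(\pi_1(Z))=1\subseteq (f_i)_\ast(\pi_1(X_i))$ for every $i$; hence each lift $\tilde f_i\colon Z\to X_i$ with $f_i\circ\tilde f_i=f$ exists and, once a basepoint of $Z$ and a compatible lift of $f(\ast)$ are fixed, is unique. Uniqueness is exactly what makes the family compatible: for $f_i\le f_j$ both $f_{ij}\circ\tilde f_j$ and $\tilde f_i$ lift $f$ through $f_i$ and agree on the basepoint, so they coincide. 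The compatible family therefore assembles into a continuous map $f_L\colon Z\to\widehat X$ with $\widehat\phi\circ f_L=f$.

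The main obstacle is uniform continuity. Since $\widehat X=\varprojlim X_i$ carries the inverse-limit uniformity generated by the pullbacks of the compact metric uniformities of the factors, $f_L$ is uniformly continuous if and only if each $\tilde f_i$ is. I would prove the latter from the \emph{discrete-cover} structure of the finite cover $f_i$: by a Lebesgue-number argument there is an entourage $E_i$ of $X$ evenly covering $X$, so that each $E_i$-ball lifts to disjoint sheets, each carried homeomorphically with uniformly continuous inverse to the ball below. The delicate point --- and the crux of the whole theorem --- is that $f(z)$ and $f(z')$ being $E_i$-close does \emph{not} by itself force $\tilde f_i(z)$ and $\tilde f_i(z')$ into the same sheet; two nearby images can have lifts in different, far-apart sheets. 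Ruling this out uniformly is where the uniform structure of $Z$ must enter, and I expect this to require relating the chosen uniformity on $Z$ to the even-cover entourages $E_i$ (e.g.\ via the intrinsic chain uniformity carried by $Z$ as a simply connected Poincar\'{e} space) rather than to the mere pullback of the uniformity of $X$ under $f$, which is too coarse to separate sheets.

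Finally, for uniqueness modulo $\pi_P(X)$, I would argue that any uniformly continuous lift $f_L'$ is determined by the image of the basepoint: the fiber $\widehat\phi^{-1}(f(\ast))$ is a single orbit of the profinite deck group $\pi_P(X)=\varprojlim G_i$, and connectedness of $Z$ together with the level-wise uniqueness above forces $f_L'=g\circ f_L$ for a unique $g\in\pi_P(X)$. This is precisely the assertion that $f_L$ is unique mod $\pi_P(X)$.
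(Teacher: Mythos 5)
Your overall strategy --- level-wise lifts $\tilde f_i\colon Z\to X_i$ via the classical lifting criterion (using that $Z$ is connected, locally path connected, and simply connected), compatibility of the family from uniqueness of basepoint-preserving lifts, assembly by the universal property of the inverse limit, and uniqueness mod $\pi_P(X)$ read off from the basepoint fiber --- is exactly the paper's argument; the paper disposes of the theorem in a single sentence (``the traditional lifting property of regular covers and the universal property of the inverse limit''). The one place you part company with the paper is where you declare uniform continuity of the $\tilde f_i$ to be ``the crux of the whole theorem'' and leave it unresolved. It is not a crux. The hypothesis gives $Z$ as a bare Poincar\'{e} space with no prescribed uniformity, so the uniformity in play is the fine uniformity (for metrizable spaces, all symmetric neighborhoods of the diagonal --- exactly the structure the paper installs in the proof of Theorem \ref{uniformize}), and the defining property of the fine uniformity is that \emph{every} continuous map out of it into \emph{any} uniform space is uniformly continuous. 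Each $\tilde f_i$ is continuous by classical covering space theory, hence uniformly continuous, and $f_L$ is then uniformly continuous into the inverse-limit uniformity with no further work. The failure mode you describe --- $E_i$-close points of $X$ whose lifts land in far-apart sheets --- is a genuine obstruction only if one insists on the pullback uniformity $f^{-1}(\text{entourages of }X)$ on $Z$, which is indeed too coarse; but that is not the uniformity the theorem refers to, so no argument relating an intrinsic chain uniformity of $Z$ to the evenly covered entourages is needed.
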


The preceding theorem simply follows from the traditional lifting property
of regular covers and the universal property of the inverse limit. This
result leads to leads to a homomorphism from $\pi _{1}(X)$ into the
profinite group $\pi _{P}(X)$, which for Peano continuua has dense image.

We will now sketch out our proofs. The first problem is to show that covers
by continua form a directed set. This problem has a classical solution when $%
X$ is a Poincar\'{e} space, and hence has a traditional universal cover. In
this case one has the traditional Galois Correspondence between regular
covering maps of a Poincar\'{e} space and induced quotients via normal
subgroups of the fundamental group--which in the case of compact covers have
finite index. Therefore the partial order of regular covering maps
corresponds to reverse inclusion of normal subgroups of $\pi _{1}(X)$. Since
the intersection of any two normal subgroups of finite index is also a
normal subgroup of finite index, the corresponding covering spaces form a
directed set.  Note that the proof of the latter fact depends on the
existence of a traditional universal cover, but the resolution of
generalized regular covering maps as inverse limits of discrete covers shows
that they are always principle fibrations (but we have no present use for
this observation--being a generalized regular covering map is much stronger).

For a general continuum there is no suitable analog of the classical
universal cover; instead, we first prove (Theorem \ref{quoteq}) a version of
Galois Correspondence for the map $\phi _{E}:X_{E}\rightarrow X$ introduced
by Berestovskii-Plaut in \cite{BPUU}, where $X$ is a uniform space and $E$
is an entourage in $X$. We will recall some basics about uniform spaces
later, but roughly speaking an entourage is a symmetric neighborhood of the
diagonal in $X\times X$ that allows one to uniformly measure
\textquotedblleft closeness\textquotedblright\ in the space. When $X$ is
compact there is a unique uniform structure compatible with the topology and 
$E$ may be taken to be any symmetric neighborhood of the diagonal in $%
X\times X$. The map $\phi _{E}:X_{E}\rightarrow X$, defined in \cite{BPUU}
using discrete homotopy theory, is a kind of \textquotedblleft universal
covering map at the scale of $E$\textquotedblright . It is a quotient map
via a discrete group $\pi _{E}(X)$ that is a kind of \textquotedblleft
fundamental group at the scale of $E$\textquotedblright . The set of all $%
X_{E}$ with natural bonding maps $\phi _{EF}:X_{F}\rightarrow X_{E}$ when
the entourage $F$ is contained in $E$, is called the \textit{fundamental
inverse system} of $X$. The projection $\phi :\widetilde{X}\rightarrow X$ of
the inverse limit of this system is, for suitably nice spaces (including
Peano continua), a generalized regular covering map called the \textit{%
uniform universal cover} (\cite{BPUU}, \cite{PWC}). However, this
construction fails to produce a surjective projection, hence a generalized
regular covering map, for more exotic continua such as solenoids.

In this paper we show that if $f:Y\rightarrow X$ is a cover by a continuum,
all sufficiently small entourages $E$ in $X$ are \textquotedblleft evenly
covered\textquotedblright\ in a natural sense, and for such $E$, $f$ is
equivalent to the induced regular covering map $\pi :X_{E}/K\rightarrow
\left( X_{E}/K\right) /(\pi _{E}(X)/K)$ for some uniquely determined normal
subgroup $K$ of finite index in $\pi _{E}(X)$ (Theorem \ref{quoteq}).
However, there are complications: $X_{E}$ may be neither compact nor
connected--in fact when $X$ is a solenoid, $X_{E}$ may have uncountably many
components. The fact that $X_{E}$ is not compact plunges us firmly into the
world of uniform spaces and requires development of a generalized covering
space theory in this broader context--which as mentioned above also has the
advantage of being a category. In order to tie these results to the
traditional covering space theory for metrizable spaces, we show that given
a traditional regular covering map between metrizable spaces, the spaces can
be \textquotedblleft uniformized\textquotedblright\ so that the covering
maps is a discrete cover of the resulting uniform spaces (Theorem \ref%
{uniformize}). In this way, the results of this paper are applicable more
generally to metrizable spaces. To see why this is not a contradiction to
the fact that the composition of traditional regular covering maps may not
be a regular covering map, see Example \ref{covex}. A key tool in this paper
is the General Chain Lifting Lemma (Lemma \ref{cl}), the discrete analog of
the path and homotopy lifting properties from the traditional theory.

As for lack of connectedness, the first problem is that the traditional
notion of equivalence of regular covering maps is too weak for covers by
spaces that may not be connected. Instead we use a very natural notion of
equivalence of uniform group actions (Definition \ref{equivdef}), which is
stronger in general but is equivalent to the traditional definition for
generalized regular covering maps between connected spaces, see Example \ref%
{strong} and Corollary \ref{eqeq}. The maps $\phi _{E}:$ $X_{E}\rightarrow X$
are also what we call \textquotedblleft proper discrete
covers\textquotedblright\ (Definition \ref{pdc}) which have a weaker
connectivity property that is sufficient to apply the General Chain Lifting
Lemma. Finally, we show that when $K$ is a normal subgroup of $\pi _{E}(X)$
of finite index, $X_{E}/K$ is compact with finitely many components, and one
may then restrict to any component to obtain a regular covering map by a
continuum (Proposition \ref{cpt}).

There remains one final complication, to find a countable cofinite inverse
system. Doing so allows us to both show that $\widehat{\phi }:\widehat{X}%
\rightarrow X$ is surjective, hence a generalized regular covering map, and
show that $\widehat{X}$ is metrizable (classical results on inverse systems
show that $\widehat{X}$ is compact and connected). The existence of this
system is a consequence of the theorem below, the proof of which requires
some delicate arguments involving discrete homotopies (see the proof of
Theorem \ref{corr}):

\begin{theorem}
\label{factor}If $X$ is a continuum then there are at most $n!$
(non-equivalent) regular $n$-fold covering maps of $X$ by continua.
\end{theorem}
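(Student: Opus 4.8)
The plan is to attach to every regular $n$-fold cover of $X$ by a continuum a monodromy permutation representation on its $n$-point fiber, to show that this datum determines the cover up to the paper's equivalence, and then to bound the number of admissible data by $|S_n| = n!$. First I would fix a basepoint $\ast \in X$. If $f : Y \to X$ is a regular $n$-fold cover by a continuum, then its deck group $G$ acts freely and transitively on $f^{-1}(\ast)$, so this fiber has exactly $n$ points. By compactness together with the even-covering statement inside Theorem \ref{quoteq}, there is an entourage $E$ of $X$ that is evenly covered by $f$; using the General Chain Lifting Lemma (Lemma \ref{cl}) I would lift $E$-chains based at $\ast$ uniquely to $E$-chains in $Y$ issuing from each fiber point. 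Closed $E$-loops then permute $f^{-1}(\ast)$, and after labelling the fiber by $\{1,\dots,n\}$ this produces a monodromy homomorphism $\rho_f : \pi_E(X) \to S_n$ whose image is a regular (free, transitive) subgroup isomorphic to $G$.

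Next I would show that the equivalence class of $f$ is recovered from $\rho_f$ and that only finitely many, indeed at most $n!$, classes can occur. By the correspondence in Theorem \ref{quoteq}, $f$ is equivalent to the induced quotient attached to $K_f = \ker \rho_f$, a normal subgroup of index $n$ in $\pi_E(X)$, so the class of $f$ is a function of $\rho_f$. Re-labelling the fiber replaces $\rho_f$ by an $S_n$-conjugate, so the honest invariant is the conjugacy class in $S_n$ of the image subgroup $\rho_f(\pi_E(X))$ together with the induced surjection onto it. Under the equivalence of uniform group actions used throughout the paper (Definition \ref{equivdef}) I would argue that two covers are equivalent exactly when these monodromy data agree, and since the data are built from permutations of a fixed $n$-element fiber there are at most $|S_n| = n!$ of them; this is precisely where the factor $n!$ — the number of ways the $n$ sheets may be permuted over the base — enters.

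The main obstacle is that there is no evident single scale $E$ simultaneously evenly covered by all the (a priori possibly infinitely many) $n$-fold covers, and $X$ need be neither path connected nor locally connected: for a solenoid $X_E$ may even have uncountably many components, so the various $\rho_f$ do not manifestly share one target $S_n$. Rather than fixing $E$ globally I would compare covers in pairs, passing to a common finer entourage $F \subseteq E$ (entourages are directed by reverse inclusion) and using Lemma \ref{cl} to transport monodromy across scales, checking that the bonding map $\phi_{EF}$ carries the representation at scale $F$ to that at scale $E$, so that distinct covers remain distinguishable at arbitrarily small scales. Controlling how $E$-chains and their liftings behave under refinement when the ambient spaces are disconnected and non-locally-connected is the delicate discrete-homotopy part of the argument; once it is in place the combinatorial bound by $n!$ is immediate. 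If convenient I would also invoke the directed-set structure of Theorem \ref{dir} to dominate any finite collection of these covers by a single cover by a continuum, moving the entire monodromy bookkeeping inside one finite deck group.
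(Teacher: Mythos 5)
There is a genuine gap, and it sits exactly at the step where you claim the bound: ``since the data are built from permutations of a fixed $n$-element fiber there are at most $|S_n|=n!$ of them.'' The monodromy datum you construct is not a single permutation but a homomorphism $\rho_f:\pi_E(X)\to S_n$ (equivalently, its kernel $K_f$, a normal subgroup of index $n$), and the number of such homomorphisms is not controlled by $|S_n|$: for a group free on $d$ generators there are up to $(n!)^d$ of them, and already $\mathrm{Hom}(F_2,S_2)$ contains three surjections with three distinct kernels. Worse, $\pi_E(X)$ --- and the stabilizer of the relevant chain component of $X_E$, which is where the kernels actually live when $X_E$ is disconnected --- need not be finitely generated a priori, so even the \emph{finiteness} of the number of index-$n$ normal subgroups is not free. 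This is precisely the difficulty the paper's proof is organized around: Theorem \ref{corr} shows that every discrete cover of $X$ by a continuum that evenly covers a fixed $E$ is equivalent to an induced quotient of a fixed set $J\subset X_E$ by a normal subgroup of a \emph{finitely generated} group $H$; the finite generation is extracted by covering $X$ with finitely many $F$-balls for $F^6\subset E$, rewriting $F$-loops up to discrete homotopy as products of loops through the finitely many centers with a bounded number of points, and a compactness/convergence argument showing only finitely many homotopy classes of such short loops exist. Only then does one invoke Hall's classical theorem on subgroups of finite index in finitely generated groups to obtain the bound. Your proposal never produces a finitely generated group, so the ``combinatorial bound by $n!$'' you call immediate is not available.

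A secondary point: the obstacle you devote the most effort to --- the absence of a single entourage evenly covered by all the covers simultaneously --- is not actually an obstacle. The paper argues by contradiction: if there were more than $n!$ pairwise non-equivalent regular $n$-fold covers, one could select finitely many ($n!+1$) of them, and a finite family of covers always admits a common evenly covered entourage. So the pairwise comparison and transport of monodromy across scales is unnecessary, while the step you treat as routine bookkeeping (identifying the group containing the kernels and proving it is finitely generated) is where the real work lies. Your reduction of a cover to a normal subgroup via Theorem \ref{quoteq} is in the right spirit and matches the paper's strategy; what is missing is the finite generation input of Theorem \ref{corr} and the correct counting principle (Hall's theorem) that converts it into the stated bound.
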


In this context we note that Gumerov (\cite{G}) gave necessary and
sufficient conditions for existence of an $n$-fold covering map between
solenoids. 

\section{Uniform spaces and discrete homotopy theory}

We will use the following notation in this paper if $W\subset X\times X$ is
any symmetric subset containing the diagonal. The $W$-ball in $X$ at $x$ is $%
B(x,W)=\{y\in X:(x,y)\in W\}$. In a metric space we have the traditional
metric balls $B(x,\varepsilon )=B(x,E_{\varepsilon })$, where $%
E_{\varepsilon }:=\{(x,y):d(x,y)<\varepsilon \}$ is the \textquotedblleft
metric entourage\textquotedblright\ for $\varepsilon $. Recall that a
uniform space consists of a topological space $X$ together with a collection
of symmetric neighborhoods of the diagonal in $X\times X$ called \textit{%
entourages}. The set of entourages is called the \textit{uniform structure}
or \textit{uniformity} of $X$. The characterizing properties are the
\textquotedblleft triangle inequality\textquotedblright : for every
entourage $F$ there is an entourage $E$ such that $E^{2}\subset F$; along
with the fact that every symmetric set containing an entourage is an
entourage. The uniformity is compatible with the topology if whenever $U$ is
open in $X$ and $x\in U$ there is some entourage $E$ such that $%
B(x,E)\subset U$.

The set $E^{k}$ can be equivalently described using the notion of an $E$%
-chain, which is a finite sequence $\alpha =\{x_{0},...,x_{n}\}$ such that
for all $i$, $(x_{i},x_{i+1})\in E$. So $E^{k}$ is the set of all $(x,y)$
such that there is an $E$-chain $\{x=x_{0},...,x_{k}=y\}$. The canonical
examples of uniform spaces are metric spaces, with \textit{metric entourages}
$E_{\varepsilon }$ forming a basis for the uniformity; compact topological
spaces, which have a unique uniform structure compatible with the topology,
in which entourages are simply all symmetric neighborhoods of the diagonal;
and topological groups. Given a topological group and a basis for the
topology at the identity $1$, there are uniquely determined
\textquotedblleft left\textquotedblright\ and \textquotedblleft
right\textquotedblright\ uniformities that are invariant with respect to
left or right translation. That is, for any $U$ in the basis at $1$, the
corresponding entourage $E_{U}$ in the left uniformity consists of all $%
(g,h) $ such that $g^{-1}h\in U$. So if $k\in G$ then since $\left(
kg\right) ^{-1}kh=g^{-1}h$, $E_{U}$ is invariant with respect to left
multiplication.

We will use simplified notation for Cartesian products of functions, such as
using $g(W)$ to denote $\left( g\times g\right) (W)$. For example, $W$ is
invariant with respect to a bijection $g:X\rightarrow X$ by definition if $%
g(W)=W$. With this notation, $f:X\rightarrow Y$ is \textit{uniformly
continuous} if and only if for every entourage $E$ in $Y$, $f^{-1}(E)$ is an
entourage in $X$. We say that $f$ is \textit{bi-uniformly continuous} if in
addition $f$ is surjective and $f(E)$ is an entourage in $Y$ for every
entourage $E$ in $X$. A 1-1 bi-uniformly continuous function is called a 
\textit{uniform homeomorphism}. In the compact case, with the unique
compatible uniformity, continuity and uniform continuity are equivalent.

Uniform spaces are the appropriate setting for discrete homotopy theory as
developed in \cite{BPTG}, \cite{PQ}, \cite{BPUU}, and \cite{PWC}. We recall
some basics now. Given an $E$-chain $\alpha =\{x_{0},...,x_{n}\}$ in a
uniform space, a \textit{basic move} consists of adding or removing a single
point, except either endpoint, so that the resulting chain is still an $E$%
-chain. An $E$-homotopy of an $E$-chain is a finite sequence of basic moves.
The $E$-homotopy equivalence class of $\alpha $ is denoted by $[\alpha ]_{E}$%
. We will frequently use without reference the fact that if $f:X\rightarrow
Y $ is a (possibly not continuous!) function between uniform spaces and $E,F$
are entourages in $X,Y$, respectively, such that $f(E)\subset F$ then given
any $E$-homotopic $E$-chains $\alpha $ and $\beta $ in $X$, $f(\alpha )$ and 
$f(\beta )$ are $F$-homotopic $F$-chains.

If $\alpha $ is an $E$-loop that is $E$-homotopic to the trivial chain
(consisting of its start/end point) then $\alpha $ is called $E$-null.
Fixing a basepoint $\ast $, the set of all $[\alpha ]_{E}$ such that $\alpha 
$ starts at $\ast $ is denoted by $X_{E}$, and $\phi _{E}:X_{E}\rightarrow X$
is the endpoint map. For any entourage $F\subset E$, in \cite{BPUU} we
define $F^{\ast }$ to be the set of all ordered pairs $([\alpha ]_{E},[\beta
]_{E})$ such that $[\overline{\alpha }\ast \beta ]_{E}=[\{a,b\}]_{E}$ and $%
(a,b)\in F$, where $a$ and $b$ are the endpoints of $\alpha $ and $\beta $,
respectively. Here $\ast $ denotes concatenation of chains when the endpoint
of the first chain is the first point of the second chain, and $\overline{%
\alpha }$ is the reversal of the chain $\alpha $. Note that $\{a,b\}$ must
be an $E$-chain since $F\subset E$. There is a useful equivalent definition
of $F^{\ast }$ (which was the original definition in \cite{BPUU}), namely $%
\left( [\alpha ]_{E},[\beta ]_{E}\right) \in F^{\ast }$ if and only if we
may write (up to $E$-homotopy) $\alpha $ as an $E$-chain $\{\ast
=x_{0},...,x_{n},a\}$ and $\beta $ as an $E$-chain $\{\ast
=x_{0},...,x_{n},b\}$ with $(a,b)\in F$. Equivalently, $(a,b)\in F$ and we
may write $\beta $ as $\alpha \ast \{a,b\}=\{\ast =x_{0},...,x_{n},a,b\}$.

In what follows we will not use the extra brackets $\{\}$ in our notation,
e.g. writing $[a,b]_{E}$ rather than $[\{a,b\}]_{E}$. The set of all $%
F^{\ast }$ is a basis for a uniformity on $X_{E}$. The set of all
equivalence classes $[\lambda ]_{E}$ of $E$-loops based at $\ast $ is
denoted by $\pi _{E}(X)$, which is a group with product induced by
concatenation of chains. The group $\pi _{E}(X)$ acts on $X_{E}$ by uniform
homeomorphisms via preconcatentation: $[\lambda ]_{E}([\alpha
]_{E})=[\lambda \ast \alpha ]_{E}$, and the entourages $F^{\ast }$ for $%
F\subset E$ are invariant with respect to this action. Then $\phi
_{E}:X_{E}\rightarrow X=X_{E}/\pi _{E}(X)$ is the quotient map. Moreover,
the restriction of $\phi _{E}$ to any $B([\alpha ]_{E},F^{\ast })$ is a
bijection onto $B(a,F)$, where $a$ is the endpoint of $\alpha $.

When $F\subset E$ is an entourage, since $F^{\ast }$ is an entourage in the
uniform space $X_{E}$, it is natural to consider the space $(X_{E})_{F^{\ast
}}$. Proposition 23 in \cite{BPUU} naturally identifies $X_{F}$ with $%
(X_{E})_{F^{\ast }}$ for any $F\subset E$ via the mapping 
\begin{equation}
\lbrack x_{0},...,x_{n}]_{F}\mapsto \lbrack \lbrack
x_{0}]_{E},[x_{0},x_{1}]_{E},...,[x_{0},...x_{n}]_{E}]_{F^{\ast }}\text{.}
\label{clf}
\end{equation}%
This formula is useful in various ways. For now, note that when $F=E$ this
implies that the map $\phi _{E^{\ast }}:(X_{E})_{E^{\ast }}\rightarrow X_{E}$
is a uniform homeomorphism, hence every $E^{\ast }$-loop in $X_{E}$ is $%
E^{\ast }$-null (this explicit statement has not been made previously, but
this argument was used in the proof of Theorem 77 in \cite{BPUU}).

Berestovskii-Plaut also showed that if $X$ is path connected and uniformly
semilocally simply connected (we will call such $X$ a \textquotedblleft
uniform Poincar\'{e} space\textquotedblright ) then for suffiently small
entourages $E$ having connected balls (and such always exist), $X_{E}$ is
the universal cover of $X$ and $\pi _{E}(X)=\pi _{1}(X)$. In general the
adjective \textquotedblleft uniformly\textquotedblright\ preceding a
traditional local topological property means that there are arbitrarily
small entourages $E$ such that the $E$-balls have the property. In compact
spaces, local properties are automatically uniformly local with respect to
the unique compatible uniform structure (see for example Proposition 68 in 
\cite{BPUU}).

One may take the inverse limit of the spaces $X_{E}$ over all entourages
with bonding maps $\phi _{EF}:X_{F}\rightarrow X_{E}$ for $F\subset E$
defined by $\phi _{EF}([\alpha ]_{F})=[\alpha ]_{E}$ to obtain a space $%
\widetilde{X}$ and projection map $\phi :\widetilde{X}\rightarrow X$ (here
we identify $X$ with $X_{X\times X}$). In \cite{BPUU} this system was
referred to as the \textit{fundamental inverse system}. The inverse limit of
the groups $\pi _{E}(X)$ is called the \textit{uniform fundamental group} $%
\pi _{U}(X)$. When $X$ is \textit{weakly chained }in the sense of \cite{PWC}
(which is \textit{a priori} stronger than the notion of \textquotedblleft
coverable\textquotedblright\ in \cite{BPUU}), the projection $\phi $ is a
generalized regular covering map with deck group $\pi _{U}(X)$, called the 
\textit{uniform universal cover}. In \cite{PWC} we extended the results of 
\cite{BPUU} for weakly chained $X$, showing that $\widetilde{X}$ is
uniformly simply connected in the sense that $\pi _{U}(X)$ is trivial, and
is unique up to uniform homeomorphism. However, some continua are not weakly
chained, and for these spaces the projection $\phi $ need not be surjective.
For example, for a standard solenoid, $\phi $ maps only onto the path
component. Throughout this construction, given a basepoint $\ast $ in $X$,
we may choose $[\ast ]_{E}$ for the basepoint in $X_{E}$ and $([\ast ]_{E})$
for the basepoint in $\widetilde{X}$.

\begin{remark}
\label{close}The idea that \textquotedblleft $E$-close $F$-chains are $EF$%
-homotopic\textquotedblright , has been used in one form or another in
several papers involving discrete homotopy theory. This concept is
particularly important for counting arguments in compact spaces, and in this
paper we will use it in the proof of Theorem \ref{factor}. For completeness
and as an illustration for the unfamiliar reader, we will sketch the
argument that $E$-close $E$-chains are $E^{2}$-homotopic. Suppose that $%
\alpha =\{x=x_{0},x_{1},...,x_{n-1},x_{n}=y\}$ and $\beta =\{x=x_{0}^{\prime
},x_{1}^{\prime },...,x_{n-1}^{\prime },x_{n}^{\prime }=y\}$ are $E$-chains
from $x$ to $y$ such that for all $i$, $(x_{i},x_{i}^{\prime })\in E$. As
the reader can readily verify, the following are $E^{2}$-homotopy basic
moves, meaning that at each stage we always have an $E^{2}$-chain. For
example, since $(x_{0},x_{1})\in E$ and $(x_{1},x_{1}^{\prime })\in E$, $%
(x_{0},x_{1}^{\prime })\in E^{2}$. 
\begin{equation*}
\alpha \rightarrow \{x_{0},x_{1}^{\prime
},x_{1},x_{2},...,x_{n}\}\rightarrow \{x_{0},x_{1}^{\prime
},x_{2},...,x_{n}\}
\end{equation*}%
\begin{equation*}
\rightarrow \{x_{0},x_{1}^{\prime },x_{2}^{\prime
},x_{2},x_{3},...,x_{n}\}\rightarrow \{x_{0},x_{1}^{\prime },x_{2}^{\prime
},x_{3},...,x_{n}\}\rightarrow \cdot \cdot \cdot \rightarrow \beta
\end{equation*}%
There is no loss of generality in assuming that $\alpha $ and $\beta $ have
the same number of points because one can always bring this about by
duplicating points, which does not change the $E$-homotopy class. There are
many variations on this theme, e.g. \textquotedblleft $E^{n}$-close $E^{m}$%
-chains are $E^{n+m}$-homotopic\textquotedblright . Even if a second chain
is not already known to be an $E$-chain, if it is $E$-close to an $E$-chain
then it must be an $E^{3}$-chain that is $E^{3}$-homotopic to the other. In
the proof of Theorem \ref{corr}, which implies Theorem \ref{factor}, there
are two stages of loss and we begin with $F^{6}\subset E$ because the first
stage results in $F^{3}$-chains, which converge to an $F^{4}$-chain, and
therefore when they are $F$-close to the limit they are $F^{6}$-homotopic,
hence $E$-homotopic to the limit. A constraint in such arguments is the
question of \textquotedblleft refinability\textquotedblright , i.e. when is
there an $F$-chain in the $E$-homotopy class of an $E$-chain? In the case of
Theorem \ref{corr} we start with loops $[\lambda ]_{E}$ in $\theta _{EF}(\pi
_{F}(X))$, which by definition means that we may take $\lambda $ to be an $F$%
-chain.
\end{remark}

Given a uniform space $X$ and entourage $E$, a subset $A$ is called $E$%
\textit{-chain connected} if every pair of points in $A$ is joined by an $E$%
-chain in $A$. $A$ is called \textit{chain connected }(equivalent to
\textquotedblleft uniformly connected\textquotedblright\ in the classical
literature) if $A$ is $E$-chain connected for every entourage $E$. In this
context we often instead use phrases like \textquotedblleft $x$ and $y$ are
joined by arbitrarily fine chains in $A$\textquotedblright . In the compact
case with the unique compatible uniformity, connected and chain connected
are equivalent. While we will not formally use it in this paper, we will
define \textquotedblleft weakly chained\textquotedblright\ (\cite{PWC})
because the definition is very simple and weakly chained continuua are a
kind of intermediate space between Poincar\'{e} spaces and arbitrary
continua, which makes them helpful in the exposition of the current paper. A
uniform space $X$ is\textit{\ weakly chained }if $X$ is chain connected and
for every entourage $E$ there exists an entourage $F$ such that if $(x,y)\in
F$, $x,y$ may be joined by arbitrarily fine chains $\alpha $ such that $%
[\alpha ]_{E}=[x,y]_{E}$. Weakly chained spaces need not be locally path
connected, but as mentioned above, solenoids are not weakly chained.

If $x\in X$, the $E$\textit{-chain component} $U_{x}^{E}$ of $x$ in $X$ is
the union of all $E$-chain connected sets in $X$ containing $x$, which is
clearly itself $E$-chain connected. Moreover, $U_{x}^{E}$ is also the
intersection of all sets containing $x$ that are \textit{uniformly }$E$%
\textit{-open} in the the sense that if $z\in U_{x}^{E}$ then $B(z,E)\subset
U_{x}^{E}$, see Lemma 9, \cite{PWC}. Combining these two equivalences we
have the following lemma that we will use without reference.

\begin{lemma}
Suppose that $X$ is a uniform space and $x\in A\subset X$. If $E$ is an
entourage in $X$ then $A=U_{x}^{E}$ if and only if $A$ is both $E$-chain
connected and uniformly $E$-open.
\end{lemma}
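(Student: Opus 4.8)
The plan is to combine the two descriptions of $U_x^E$ recorded just above the statement: first, that $U_x^E$ is the union of all $E$-chain connected subsets of $X$ containing $x$, and second, that (by Lemma 9 of \cite{PWC}) it is the intersection of all uniformly $E$-open subsets of $X$ containing $x$. The first description exhibits $U_x^E$ as the \emph{largest} $E$-chain connected set containing $x$, and the second exhibits it as the \emph{smallest} uniformly $E$-open set containing $x$. With both facts in hand the lemma reduces to a two-way containment argument.

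For the forward implication, suppose $A=U_x^E$. Then $A$ is $E$-chain connected: the union of $E$-chain connected sets sharing the common point $x$ is again $E$-chain connected, since any two of its points can be joined by concatenating their respective $E$-chains to $x$. And $A$ is uniformly $E$-open because an intersection of uniformly $E$-open sets is uniformly $E$-open; indeed, if $z$ lies in every member of the family, then $B(z,E)$ lies in every member and hence in the intersection. Thus $A$ has both properties.

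For the reverse implication, suppose $x\in A$ and that $A$ is both $E$-chain connected and uniformly $E$-open. Since $A$ is an $E$-chain connected set containing $x$, it is one of the sets appearing in the union that defines $U_x^E$, so $A\subseteq U_x^E$. Since $A$ is a uniformly $E$-open set containing $x$, it is one of the sets appearing in the intersection that defines $U_x^E$, so $U_x^E\subseteq A$. Combining the two inclusions gives $A=U_x^E$.

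There is no real obstacle here once the two characterizations are available: the substantive content has been absorbed into the citation of Lemma 9 of \cite{PWC} (that $U_x^E$ equals the intersection of the uniformly $E$-open supersets of $x$), together with the elementary observation that a union of $E$-chain connected sets through a common point is $E$-chain connected. The only point worth verifying explicitly is the stability of uniform $E$-openness under intersection, which is immediate and is used in the forward direction.
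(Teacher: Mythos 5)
Your proof is correct and follows exactly the route the paper intends: the paper states the lemma as an immediate consequence of ``combining these two equivalences'' (the union-of-chain-connected-sets description and the intersection-of-uniformly-$E$-open-sets description from Lemma 9 of \cite{PWC}), and your two-way containment argument is simply the explicit write-out of that combination. Nothing is missing.
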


We say that $U$ is \textit{uniformly open} if $U$ is uniformly $E$-open for
some $E$. Note that the complement of any uniformly $E$-open set is also
uniformly $E$-open, and in particular uniformly open sets are also closed.
It follows that $X$ is chain connected if and only if $X$ is the only
non-empty uniformly open subset of $X$. The \textit{chain component} of $%
x\in X$ is defined to be the intersection of all $U_{x}^{E}$; equivalently
the chain component of $x$ is the union of all chain connected sets
containing $x$.

\begin{remark}
We note for once and for all that when $X$ is chain connected, the
construction of $\phi _{E}:X_{E}\rightarrow X$ is independent, up to
equivariant homeomorphism, of choice of basepoint in $X$ (\cite{BPUU},
Remark 18). The proof of this fact is similar to the proof from traditional
covering space theory.
\end{remark}

A summary of known connectivity properties of $X_{E}$ is as follows. If $X$
is \textit{coverable} in the sense of \cite{BPUU}, which includes if $X$ is
a uniform Poincar\'{e} space, then equivalently there is a basis consisting
of entourages $E$ such that $X_{E}$ is chain connected (this is automatic
if, for example, $E$ has chain connected balls). If $X$ is weakly chained
then for any entourage, the chain components of $X_{E}$ are uniformly open.
Weakly chained is \textit{a priori} weaker than coverable and it is
presently unknown whether these properties are equivalent in the metrizable
case. If $X$ is a solenoid then the chain components are generally not
uniformly open because, as with the solenoid itself, $X_{E}$ is locally a
product of a Cantor set and an interval, and $\pi _{E}(X)$ acts by
\textquotedblleft permuting\textquotedblright\ some of the path components
(see Example \ref{solenoid}).

\section{Quotients and Covering Maps}

We recall the basics of quotients of uniform spaces worked out in \cite{PQ}.
Let $X$ be a uniform space and $G$ be a group of bijections of $X$. $G$ is
said to act \textit{isomorphically} on $X$ if $X$ has a basis of entourages $%
E$ that are invariant with respect to $G$; that is $g(E)=E$ for all $g\in G$%
. Quotients may be considered in the more general setting of
\textquotedblleft equiuniform\textquotedblright\ actions (\cite{PQ}), but we
do not need such generality here. The quotient space $X/G$ is defined to be
the set of all orbits $Gx:=\{g(x):g\in G\}$; the map $\pi :X\rightarrow X/G$
defined by $\pi (x)=Gx$ is called the quotient map. From the definition of
isomorphic action it is clear that the maps in $G$ are all uniform
homeomorphisms of $X$. According to Theorem 11, \cite{PQ}, for an isomorphic
action the set of all $\pi (E)$ such that $E$ is an entourage in $X$ is a
uniform structure on $X/G$ that is compatible with the quotient topology.
This is called the \textit{quotient uniformity}, and will be the default
uniformity on $X/G$. Then the quotient map $\pi :X\rightarrow X/G$ is
bi-uniformly continuous.

Metrizability questions are generally simpler for uniform spaces, which are
metrizable if and only if they are Hausdorff and have a countable basis. In
particular, if the quotient of an isomorphic action on a metrizable uniform
space is Hausdorff, it is metrizable. In what follows, we will use these
facts without reference.

\begin{remark}
\label{quotient}If $f:X\rightarrow Y$ is a bi-uniformly continuous mapping
and $G$ acts isomorphically on $X$ such that for all $x\in X$, $%
f^{-1}(f(x))=Gx$ then the mapping $f(x)\leftrightarrow Gx$ is a uniform
homeomorphism and we may identify $f$ as the quotient map $f:X\rightarrow
Y=X/G$. We will use this fact frequently to show that a bi-uniformly
continuous function $f$ is in fact a quotient map. Our shorthand for this
statement will be \textquotedblleft the point preimages of $f$ are the
orbits of $G$\textquotedblright .
\end{remark}

\begin{remark}
We note that as was so colorfully stated in \cite{I}, the idea that general
topological quotients of uniform spaces have a compatible uniformity is
\textquotedblleft horribly false\textquotedblright . But quotients via
suitable group actions provide the uniformness of identifications that is
needed to make it work. Under the right circumstances we also may work in
reverse as exemplified in the definition of $F^{\ast }$ in the previous
section: start with a uniform structure on the quotient space and lift to a
uniform structure on the original. The following definition was not stated
in \cite{PQ} but it will be important in the current paper.
\end{remark}

\begin{definition}
\label{equivdef}Suppose that $G_{i}$ acts isomorphically on a uniform space $%
Y_{i}$ for $i=1,2$. We say the actions are equivalent if there are a uniform
homeomorphism $h:Y_{1}\rightarrow Y_{2}$ and an isomorphism $\theta
:G_{1}\rightarrow G_{2}$ that are compatible in the following sense: for all 
$g\in G_{1}$, $h\circ g=\theta (g)\circ h$. The pair $(h,\theta )$ is called
an equivalence of the actions. We say that two quotient maps $\pi
_{i}:Y_{i}\rightarrow Y_{i}/G_{i}$ are equivalent if the actions of $G_{i}$
on $Y_{i}$ are equivalent.
\end{definition}

Note that the equivalence described above is indeed an equivalence relation.
For example, if $(h,\theta )$ is an equivalence of the actions of $G_{1}$ on 
$Y_{1}$ and of $G_{2}$ on $Y_{2}$ then $(h^{-1},\theta ^{-1})$ is an
equivalence of actions of $G_{2}$ on $Y_{2}$ and $G_{1}$ on $Y_{1}$. In
fact, for any $k=\theta (g)$ in $G_{2}$ and $y=h(x)$ in $Y_{2}$, 
\begin{equation*}
h^{-1}(k(y))=\theta ^{-1}(k)(h^{-1}(y))\Leftrightarrow \theta
(g)(h(x))=h(g(x))\text{.}
\end{equation*}

\begin{example}
\label{strong}Consider the compact space $Y=G\times \lbrack 0,1]$, where $G$
is the group with three elements with the discrete topology, with $G$ acting
on $Y$ by cycling the components, with quotient $[0,1]$. That is, for $z\in
\{0,1,2\}$, $g_{\overline{z}}((\overline{w},x))=(\overline{z+w},x)$.
Consider the homeomorphism $h$ that fixes the component $\{\overline{0}%
\}\times \lbrack 0,1]$ and exchanges the other two components, i.e. $h((%
\overline{z},x))=(\overline{2z},x)$. Then $h$ is a covering equivalence in
the traditional sense, but to be an equivalence in the current sense would
require $\theta :G\rightarrow G$ to be the identity map. But for example $%
h(g_{\overline{1}}((\overline{1},x)))=h((\overline{2},x))=(\overline{1},x)$,
while $\theta (g_{\overline{1}})(h((\overline{1},x))=g_{\overline{1}}((%
\overline{2},x))=(\overline{0},x)$. So compatibility fails. The next lemma
shows that this notion of equivalence is in general stronger than the
traditional one, and we will see (Corollary \ref{eqeq}) that the two notions
are equivalent for generalized regular covering maps.
\end{example}

\begin{lemma}
\label{induced}Suppose that $(h,\theta )$ is an equivalence of isomorphic
actions of $G_{i}$ on $Y_{i}$ for $i=1,2$. Then the mapping $\overline{h}%
(G_{1}y):=G_{2}h(y)$ is a uniform homeomorphism from $Y_{1}/G_{1}$ to $%
Y_{2}/G_{2}$ such that if $\pi _{i}:Y_{i}\rightarrow Y_{i}/G_{i}$ is the
quotient map, then $\overline{h}\circ \pi _{1}=\pi _{2}\circ h$.
\end{lemma}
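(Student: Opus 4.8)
The plan is to verify in turn that $\overline{h}$ is well-defined, that the asserted square commutes, that $\overline{h}$ is a bijection, and finally that it is bi-uniformly continuous. First I would check well-definedness: if $G_1 y = G_1 y'$ then $y' = g(y)$ for some $g \in G_1$, and the compatibility relation gives $h(y') = h(g(y)) = \theta(g)(h(y))$; since $\theta(g) \in G_2$ this means $G_2 h(y') = G_2 h(y)$, so $\overline{h}(G_1 y)$ is independent of the chosen representative. This is the one step where the compatibility $h \circ g = \theta(g) \circ h$ is used essentially. The commuting relation $\overline{h} \circ \pi_1 = \pi_2 \circ h$ is then immediate, since $\overline{h}(\pi_1(y)) = \overline{h}(G_1 y) = G_2 h(y) = \pi_2(h(y))$.

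Next I would establish that $\overline{h}$ is a bijection by exhibiting its inverse. As noted in the remark preceding the lemma, $(h^{-1}, \theta^{-1})$ is again an equivalence of the isomorphic actions (now of $G_2$ on $Y_2$ and of $G_1$ on $Y_1$), so the identical well-definedness argument produces a map $\overline{h^{-1}}(G_2 z) := G_1 h^{-1}(z)$. The computation $\overline{h^{-1}}(\overline{h}(G_1 y)) = \overline{h^{-1}}(G_2 h(y)) = G_1 h^{-1}(h(y)) = G_1 y$, together with the symmetric identity in the other order, shows that $\overline{h^{-1}}$ is a two-sided inverse, so $\overline{h}$ is a bijection.

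The substantive point is that $\overline{h}$ is a uniform homeomorphism, and here I would exploit the description of the quotient uniformity recalled above (Theorem 11 of \cite{PQ}), namely that the sets $\pi_i(E)$, with $E$ an entourage of $Y_i$, form a basis for the quotient uniformity of $Y_i/G_i$. Applying $\overline{h}\times\overline{h}$ to $\pi_1(E)$ and using the commuting square in the form $(\overline{h}\times\overline{h})\circ(\pi_1\times\pi_1) = (\pi_2\times\pi_2)\circ(h\times h)$, I obtain the key identity $(\overline{h}\times\overline{h})(\pi_1(E)) = \pi_2(h(E))$ for every entourage $E$ of $Y_1$; this is a genuine equality of subsets because $h\times h$ is a bijection of $Y_1\times Y_1$ onto $Y_2\times Y_2$. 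Since $h$ is a uniform homeomorphism, $h(E)$ is an entourage of $Y_2$, so $\pi_2(h(E))$ is an entourage of $Y_2/G_2$; thus $\overline{h}$ carries each basis entourage of $Y_1/G_1$, and hence every entourage, to an entourage of $Y_2/G_2$. For the reverse direction, each basis entourage of $Y_2/G_2$ has the form $\pi_2(F)$; writing $F = h(h^{-1}(F))$ with $h^{-1}(F)$ an entourage of $Y_1$, the key identity together with the bijectivity of $\overline{h}\times\overline{h}$ gives $(\overline{h}\times\overline{h})^{-1}(\pi_2(F)) = \pi_1(h^{-1}(F))$, again an entourage. Hence $\overline{h}$ is bi-uniformly continuous, and being a bijection it is a uniform homeomorphism.

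I expect no serious obstacle: the content is a bookkeeping translation of the compatibility relation $h\circ g = \theta(g)\circ h$ into statements about orbits, and of the uniform-homeomorphism property of $h$ into statements about the quotient uniformities. The only point demanding care is keeping the Cartesian-product bookkeeping straight, in particular verifying that $(\overline{h}\times\overline{h})(\pi_1(E)) = \pi_2(h(E))$ holds as an equality of subsets of the relevant product rather than merely as an inclusion, which is exactly where the bijectivity of $h$ and of $\overline{h}$ enters.
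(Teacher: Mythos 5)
Your proof is correct and follows essentially the same route as the paper's: well-definedness via the compatibility relation, bijectivity via the inverse equivalence $(h^{-1},\theta^{-1})$, and the uniform-homeomorphism property from the definition of the quotient uniformity. The only difference is that you spell out the final step (the identity $(\overline{h}\times\overline{h})(\pi_1(E))=\pi_2(h(E))$ and its consequences), which the paper simply asserts as following from the definition of the quotient uniformity and the fact that $h$ is a uniform homeomorphism.
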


\begin{proof}
We first show that $\overline{h}$ is well-defined. If $z\in G_{1}y$ then $%
z=g(y)$ for some $g\in G_{1}$. By definition, $h(z)=\theta (g)(h(y))$ and
therefore $h(z)\in G_{2}y$ and $G_{2}z=G_{2}y$. By symmetry the proof that $%
\overline{h}$ is a bijection will be complete if we show that $\overline{%
h^{-1}}\circ \overline{h}$ is the identity. But in fact 
\begin{equation*}
\overline{h^{-1}}(\overline{h}(G_{1}y))=\overline{h^{-1}}%
(G_{2}h(y))=G_{1}h^{-1}(h(y))=G_{1}y\text{. }
\end{equation*}%
Finally, the fact that $\overline{h}$ is a uniform homeomorphism follows
from the definition of the quotient uniformity and the fact that $h$ is a
uniform homeomorphism.
\end{proof}

For any entourage $E$ in a uniform space $X$ on which $G$ acts
isomorphically, let 
\begin{equation*}
U_{E}(G):=\{g\in G:(x,g(x))\in E\text{ for all }x\in X\}
\end{equation*}%
\begin{equation*}
S_{E}(G):=\{g\in G:(x,g(x))\in E\text{ for some }x\in X\}
\end{equation*}%
and $N_{E}(G)$ denote the subgroup generated by $S_{E}(G)$ in $G$. According
to Theorem 31 in \cite{PQ}, since the action of $G$ is isomorphic, $N_{E}(G)$
is a normal subgroup of $G$. If for some entourage $F$, $N_{F}(G)=1$, then $%
G $ is said to act \textit{discretely}. We say that $G$ acts \textit{%
prodiscretely} if for every entourage $E$ there is some entourage $F$ such
that $N_{F}(G)\subset U_{E}(G)$. The collection of all $U_{E}(G)$ where $E$
is an entourage in $X$ forms a basis at the identity for the topology of
uniform convergence in $G$, making it a topological group (Theorem 17, \cite%
{PQ}). We note that when $X$ is metrizable and therefore has a countable
base for its uniform structure, $G$ has a countable base of sets $U_{E}(G)$
and therefore is metrizable. This topology gives rise, as described in the
second section, to the left and right uniformities of uniform convergence.

\begin{remark}
There are some subtleties related to invariance properties of the uniform
structure and topology on a group $G$ acting on a uniform space--for more
information see the exact statement of Theorem 17 and Notation 18 in \cite%
{PQ}. These details are unimportant for this paper due to Lemma 19 in \cite%
{PQ}, which implies that in the case of isomorphic actions, the left and
right uniformities of uniform convergence of $G$ are the same.
\end{remark}

We have the following basic results (all references in this paragraph are
from \cite{PQ}): If $X$ is Hausdorff, so is $G$, and if $X$ is complete (as
a uniform space, defined analogously to completeness in a metric space) and $%
G$ is a closed subset of the group of all uniform homeomorphisms of $X$ then 
$G$ is complete (Theorem 20). In the case of a prodiscrete action we have
the following: The set of all $N_{E}(G)$, where $E$ is an entourage of $X$,
is a basis at $1$ for the topology of $G$ (Corollary 32); the action of $X$
is free (Lemma 33); and if $G$ is complete then $G$ is a prodiscrete
topological group (i.e. topologically the inverse limit of discrete groups)
(Corollary 34). If $G$ acts discretely then $G$ is discrete, hence complete,
each of its orbits is uniformly discrete, and the action is properly
discontinuous (Proposition 22). In fact, a discrete action is essentially a
\textquotedblleft uniformly properly discontinuous\textquotedblright\
action. Here \textquotedblleft $A$ is uniformly discrete\textquotedblright\
means that for some entourage $E$, every $E$-ball in the uniform space
contains at most one point of $A$.

Suppose $G$ acts isomorphically on $X$ and $H$ is a subgroup of $G$. Then
clearly $H$ also acts isomorphically on $X$, and the action of $H$ is
discrete (resp. prodiscrete) if the action of $G$ is. When $H$ is a normal
subgroup of $G$ then $G/H$ acts isomorphically on $X/H$ (with the quotient
uniformity) and we have the induced quotient mapping $\phi :X/H\rightarrow
\left( X/H\right) /\left( G/H\right) =X/G$ (Proposition 28, \cite{PQ}). The
latter equality is via the uniform homeomorphism $(G/H)(Hx)\leftrightarrow
Gx $. Although we proved the following statement for prodiscrete actions in
Theorem 31, \cite{PQ}, we did not prove it for discrete actions, so we will
do so now:

\begin{lemma}
\label{addon}If $G$ acts discretely and isomorphically on a uniform space $X$
and $H$ is a normal subgroup of $G$ then $G/H$ acts discretely (and
isomorphically) on $X/H$.
\end{lemma}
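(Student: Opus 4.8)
The plan is to prove Lemma~\ref{addon} by reducing the discrete case to the machinery already recorded for general isomorphic actions, using the fact that discreteness is just the special case of prodiscreteness together with the extra conclusion that a single entourage suffices. First I would recall the setup: since $G$ acts discretely on $X$, there is an entourage $F$ with $N_{F}(G)=1$. Because a discrete action is in particular prodiscrete, Theorem~31 of \cite{PQ} already gives that $G/H$ acts prodiscretely and isomorphically on $X/H$; the isomorphic part is exactly the statement quoted just before the lemma (Proposition~28, \cite{PQ}), so the only genuinely new content is upgrading ``prodiscrete'' to ``discrete,'' i.e. producing a single entourage $E'$ on $X/H$ with $N_{E'}(G/H)=1$.

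The natural candidate is $E'=\pi(F)$, where $\pi:X\rightarrow X/H$ is the quotient map and $F$ is the witnessing entourage for discreteness of $G$; since $\pi$ is bi-uniformly continuous, $\pi(F)$ is an entourage of $X/H$. I would then show $N_{E'}(G/H)=1$ by unwinding the definition of $S_{E'}(G/H)$. Suppose $gH\in S_{\pi(F)}(G/H)$, so that there is an orbit $Hx$ with $\bigl(Hx,(gH)(Hx)\bigr)=\bigl(Hx,H(g(x))\bigr)\in\pi(F)$. By definition of the quotient entourage $\pi(F)$, this means that after possibly replacing $x$ by a point in its $H$-orbit, we may arrange $(x',g'(x'))\in F$ for representatives; the key is that $(x,h(g(x)))\in F$ for some $h\in H$, which says $hg\in S_{F}(G)$. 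Since $N_{F}(G)=1$ we have $S_{F}(G)=\{1\}$, forcing $hg=1$, i.e. $g\in H$ and hence $gH$ is the identity of $G/H$. Thus $S_{\pi(F)}(G/H)=\{H\}$ and therefore $N_{\pi(F)}(G/H)=1$, which is exactly discreteness of the $G/H$-action.

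The step I expect to require the most care is the middle one: translating the membership $\bigl(Hx,H(g(x))\bigr)\in\pi(F)$ back into a statement about $F$ in $X$. The quotient uniformity is generated by the sets $\pi(E)$, but $\bigl(\pi(a),\pi(b)\bigr)\in\pi(F)$ does not automatically yield $(a,b)\in F$; it only yields $(a',b')\in F$ for some points with $\pi(a')=\pi(a)$, $\pi(b')=\pi(b)$, so one must use the freedom to move $a,b$ within their $H$-orbits and the $H$-invariance of $F$ (which holds because $H\le G$ and $F$ may be taken $G$-invariant, the action being isomorphic) to pin down a representative of the form $(x,hg(x))$. The other routine point to check is that $N_{\pi(F)}(G/H)$ is literally generated by $S_{\pi(F)}(G/H)$ and that the identity computation above is unaffected by passing to the generated subgroup, which is immediate once $S_{\pi(F)}(G/H)=\{H\}$. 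With these verified, the lemma follows, the isomorphic part having been inherited verbatim from Proposition~28 of \cite{PQ}.
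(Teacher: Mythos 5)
Your proposal is correct and follows essentially the same route as the paper: take a $G$-invariant entourage $F$ with $N_{F}(G)=1$, push it forward to the quotient entourage $\pi(F)$, and show $S_{\pi(F)}(G/H)$ is trivial by lifting the relation $(Hx,Hg(x))\in\pi(F)$ back to a pair $(h_{1}(x),h_{2}(g(x)))\in F$ and using invariance of $F$ to conclude $h_{1}^{-1}h_{2}g\in S_{F}(G)=\{1\}$, hence $g\in H$. The step you flagged as delicate is exactly the one the paper carries out, and your handling of it is sound.
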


\begin{proof}
Let $\eta :X\rightarrow X/H$ be the quotient map and let $E$ be an invariant
entourage such that $N_{E}(G)$ is trivial. We claim that $N_{\eta (E)}(G/H)$
is also trivial, for which it suffices to prove that $S_{\eta (E)}(G/H)$
contains only the trivial element. If $gH\in S_{\eta (E)}(G/H)$ then by
definition $\eta (E)$ contains $(Hx,gH(Hx))=(Hx,Hg(x))$ for some $x\in X$.
This means that for some $h_{1},h_{2}\in H$, 
\begin{equation*}
(h_{1}(x),h_{2}(g(x)))\in E\text{.}
\end{equation*}%
Since $E$ is invariant, this means $(x,h_{1}^{-1}(h_{2}(g(x)))\in E$. That
is, $h_{1}^{-1}h_{2}g\in N_{E}(G)$ and therefore is trivial. Therefore $g\in
H$ and $Hg$ is trivial.
\end{proof}

\begin{proposition}
\label{haus}If $G$ is a complete group that acts prodiscretely and
isomorphically on a metrizable uniform space $X$ then the orbits of $G$ are
closed and $X/G$ is metrizable.
\end{proposition}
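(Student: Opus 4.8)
The plan is to reduce everything to a single assertion: that the orbits of $G$ are closed. The excerpt records that a Hausdorff quotient of an isomorphic action on a metrizable uniform space is automatically metrizable, so once $X/G$ is known to be Hausdorff, its metrizability is immediate. And a uniform space is Hausdorff exactly when the intersection of its entourages is the diagonal; since the sets $\pi(E)$, with $E$ ranging over the (cofinal) invariant entourages of $X$, form a basis for the quotient uniformity, $X/G$ is Hausdorff precisely when $\bigcap_{E}\pi(E)$ is the diagonal of $X/G$. Unwinding the quotient uniformity using invariance, $(\pi(x),\pi(y))\in\pi(E)$ for an invariant entourage $E$ if and only if there is some $g\in G$ with $(x,g(y))\in E$; hence $(\pi(x),\pi(y))$ lies in every $\pi(E)$ exactly when $x$ lies in the closure of the orbit $Gy$. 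If orbits are closed this forces $x\in Gy$, i.e.\ $\pi(x)=\pi(y)$. So the whole proposition rests on showing orbits are closed.

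To prove $Gx$ is closed I would first fix a decreasing countable basis $E_{1}\supseteq E_{2}\supseteq\cdots$ of $G$-invariant entourages of $X$; this exists because $X$ is metrizable (countable basis) and the action is isomorphic (invariant entourages are cofinal), and finite intersections of invariant entourages are again invariant entourages. Since $X$ is metrizable, $\overline{Gx}$ is its sequential closure, so suppose $z$ is a limit of points $g_{n}(x)$ with $g_{n}\in G$ chosen so that $(z,g_{n}(x))\in E_{n}$. The key step is to show that $(g_{n})$ is Cauchy in the (metrizable) topological group $G$. For $n,m\geq N$ the triangle inequality through $z$ gives $(g_{n}(x),g_{m}(x))\in E_{N}^{2}$, and invariance of $E_{N}^{2}$ yields $(x,g_{n}^{-1}g_{m}(x))\in E_{N}^{2}$; hence $g_{n}^{-1}g_{m}\in S_{E_{N}^{2}}(G)\subseteq N_{E_{N}^{2}}(G)$.

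The main obstacle is exactly this Cauchy argument, i.e.\ converting "orbit points converge in $X$" into "the group elements converge in $G$," and this is where completeness and prodiscreteness enter. Given any entourage $E$, prodiscreteness supplies an entourage $F$ with $N_{F}(G)\subseteq U_{E}(G)$, and choosing $N$ with $E_{N}^{2}\subseteq F$ gives $g_{n}^{-1}g_{m}\in N_{E_{N}^{2}}(G)\subseteq N_{F}(G)\subseteq U_{E}(G)$ for all $n,m\geq N$. Since the $U_{E}(G)$ form a basis at the identity for the uniformity of uniform convergence (and the left and right uniformities coincide for isomorphic actions), $(g_{n})$ is Cauchy; as $G$ is complete and metrizable, $g_{n}\to g$ for some $g\in G$. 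Convergence in the uniformity of uniform convergence, together with invariance of the $E_{N}$, forces $g_{n}(x)\to g(x)$; since also $g_{n}(x)\to z$ and $X$ is Hausdorff, $g(x)=z$, so $z\in Gx$ and $Gx$ is closed. I emphasize that prodiscreteness is precisely what allows the passage from membership in $N_{E_{N}^{2}}(G)$, which is controlled by a single entourage evaluated at the single point $x$, to membership in the uniform neighborhood $U_{E}(G)$ of the identity; without it the $g_{n}$ need not be Cauchy.

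With closedness of orbits in hand, Hausdorffness of $X/G$ follows from the translation in the first paragraph, and metrizability then follows from the quoted fact that a Hausdorff quotient of an isomorphic action on a metrizable uniform space is metrizable. These final deductions are routine once the orbit-closure step is complete.
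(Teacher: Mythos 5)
Your proof of the orbit-closedness half is essentially the paper's own argument: fix a countable nested invariant basis, approximate a closure point of $Gx$ by points $g_{n}(x)$, use prodiscreteness in the form $N_{F}(G)\subset U_{E}(G)$ to upgrade the single-point estimate $(x,g_{n}^{-1}g_{m}(x))\in E_{N}^{2}$ to a uniform one, conclude that $(g_{n})$ is Cauchy, and invoke completeness. (You spell out the passage through $S_{E_{N}^{2}}(G)\subset N_{E_{N}^{2}}(G)$ more explicitly than the paper does, which is welcome.) Where you genuinely diverge is the Hausdorffness of $X/G$: the paper gives a direct separation argument, choosing an invariant $E$ with $y\notin B(w,E)$ for all $w\in Gx$, taking $F$ with $F^{2}\subset E$, and showing that the saturated $F$-neighborhoods $U=\bigcup_{u\in Gy}B(u,F)$ and $W=\bigcup_{w\in Gx}B(w,F)$ have disjoint images. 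You instead use the standard fact that a uniform space is Hausdorff iff the intersection of its entourages is the diagonal, unwind the quotient uniformity via invariance to see that $(\pi(x),\pi(y))\in\bigcap_{E}\pi(E)$ exactly when $x\in\overline{Gy}$, and let closedness of orbits finish the job. Your route is shorter and makes the logical dependence on orbit-closedness completely transparent; the paper's route is more hands-on and does not presuppose the separation criterion for uniform spaces, but both are correct and both ultimately rest on the same Cauchy-plus-completeness step. One small point worth making explicit if you write this up: the equivalence ``$x\in\overline{Gy}$ iff $B(x,E)$ meets $Gy$ for every invariant $E$'' uses that invariant entourages are cofinal and that the $E$-balls form a neighborhood basis in the compatible topology, both of which hold here.
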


\begin{proof}
Let $f:X\rightarrow X/G$ be the quotient map. Let $\{E_{i}\}$ be a countable
nested invariant basis for $X$ and recall that the set of all $N_{E_{i}}(G)$
is a basis at the identity for the topology of uniform convergence on $G$,
which means that for every $i$ there is some $j$ such that 
\begin{equation}
N_{E_{j}}(G)\subset U_{E_{i}}(G)\text{.}  \label{need}
\end{equation}%
This implies the following: if $g\in G$ has the property that $%
(g(x_{0}),x_{0})\in E_{j}$ for some $x_{0}\in X$ then $(g(x),x))\in E_{i}$
for all $x\in X$. Now suppose that $y$ is in the closure of some orbit $Gx$.
That is, for every entourage $E_{j}$, there is some $x_{j}:=g_{j}(x)\in
B(y,E_{j})$. But now Inclusion (\ref{need}) implies that $\{g_{j}\}$ is
Cauchy in the topology of uniform convergence. Since $G$ is complete, $%
g_{j}\rightarrow g\in G$. This means that for any invariant entourage $E$, $%
(g(x),g_{i}(x))\in E$ for all large $i$. In other words, $g(x)=y$, so $y\in
Gx$.

To finish the proof, we need only show that $X/G$ is Hausdorff. Let $x\in X$
and suppose that $y$ is not in $Gx$. We first note that there is some
invariant entourage $E$ such that $y\notin B(w,E)$ for every $w\in Gx$. If
this were not true, then for every $i$ there would be some $w_{i}\in G_{x}$
such that $y\in B(w_{i},E_{i})$, which is equivalent to $w_{i}\in B(y,E_{i})$%
. But this implies that $y$ is in the closure of $G_{x}$, so $y\in Gx$, a
contradiction.

Suppose $F$ is an invariant entourage such that $F^{2}\subset E$. Let $%
U:=\dbigcup\limits_{u\in Gy}B(u,F)$ and $W:=\dbigcup\limits_{w\in Gx}B(w,F)$%
. We claim that $U\cap W=\varnothing $. If $z\in U\cap W$ then $z\in
B(u,F)\cap B(w,F)$ for some $u\in Gy$ and $w\in Gx$. This means that $%
(u,w)\in F^{2}\subset E$. Since $E$ is invariant, if $g(u)=y$, then $%
(y,g(w))\in E$. Since $g(w)\in Gx$, this contradicts our choice of $E$.

To finish the proof, we claim that $f(U)\cap f(W)=\varnothing $. Suppose
there is some $a\in f(U)\cap f(W)$. So there exist $a^{\prime }\in U$ and $%
a^{\prime \prime }\in W$ such that $f(a^{\prime })=f(a^{\prime \prime })=a$.
This last equation implies that there is some $k\in G$ such that $%
k(a^{\prime })=a^{\prime \prime }$. By definition of $U$ and $W$, we have
that $(a^{\prime },g_{1}(y)),(a^{\prime \prime },g_{2}(x))\in F$ for some $%
g_{1},g_{2}\in G$. Since $F$ is invariant, this means that $%
(k(g_{1}(y)),k(a^{\prime }))=(k(g_{1}(y)),a^{\prime \prime })\in F$. Since $%
k(g_{1}(y))$ is in the orbit of $Y$, this means that $a^{\prime \prime }\in
U\cap W$, a contradiction.
\end{proof}

\begin{proposition}
\label{inducedp}Let $h:Z\rightarrow X=Z/G$ be a quotient via an isomorphic
action by $G$, and $g:Z\rightarrow Y$ and $f:Y\rightarrow X$ be bi-uniformly
continuous functions such that $h=f\circ g$. Suppose that for some normal
subgroup $H$ of $G$, the point preimages of $g$ are the orbits of $H$. Then $%
f$ is a quotient map equivalent to the induced quotient map $\pi
:Z/H\rightarrow (Z/H)/(G/H)$.
\end{proposition}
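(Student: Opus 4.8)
The plan is to use the hypothesis on the point preimages of $g$ to identify $g$ with the quotient map $\eta:Z\rightarrow Z/H$, and then to show that the factorization $h=f\circ g$ forces $f$ to coincide, up to a uniform homeomorphism of its domain, with the induced quotient $\pi$. First I would note that, being a subgroup of $G$, the group $H$ also acts isomorphically on $Z$, and that the point preimages of the bi-uniformly continuous map $g$ are by hypothesis exactly the $H$-orbits. Thus Remark \ref{quotient} applies and produces a uniform homeomorphism $\psi:Z/H\rightarrow Y$, $\psi(Hz)=g(z)$, through which $g$ is identified with the quotient map $\eta$; that is, $g=\psi\circ\eta$.

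Next I would bring in the induced quotient. Because $H$ is normal in $G$, the group $G/H$ acts isomorphically on $Z/H$, and (by Proposition 28 of \cite{PQ}, recalled in the paragraph preceding Lemma \ref{addon}) we have the induced quotient map $\pi:Z/H\rightarrow (Z/H)/(G/H)=Z/G=X$, where the final identification is the uniform homeomorphism $(G/H)(Hz)\leftrightarrow Gz$. Under this identification $\pi\circ\eta=h$, since $\pi(\eta(z))=(G/H)(Hz)\leftrightarrow Gz=h(z)$. Combining this with $g=\psi\circ\eta$ and the hypothesis $h=f\circ g$ yields $f\circ\psi\circ\eta=f\circ g=h=\pi\circ\eta$, and since $\eta$ is surjective this forces
\[
f\circ\psi=\pi,\qquad\text{equivalently}\qquad f=\pi\circ\psi^{-1}.
\]

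With $f$ expressed this way, the remaining task is purely formal. I would transport the isomorphic action of $G/H$ on $Z/H$ across the uniform homeomorphism $\psi$ to an action on $Y$ by setting $(gH)\cdot y:=\psi\big((gH)(\psi^{-1}(y))\big)$; since $\psi$ is a uniform homeomorphism, the images $\psi(E)$ of an invariant basis form an invariant basis on $Y$, so this action is again isomorphic, and by construction $(\psi,\mathrm{id}_{G/H})$ satisfies the compatibility condition of Definition \ref{equivdef} and is therefore an equivalence of the two actions. Because $\psi$ is equivariant it carries $(G/H)$-orbits to $(G/H)$-orbits, so the point preimages of $f=\pi\circ\psi^{-1}$ are exactly the $(G/H)$-orbits in $Y$; a second application of Remark \ref{quotient} then identifies $f$ with the quotient map $Y\rightarrow Y/(G/H)$. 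Since the $G/H$-action on $Y$ is equivalent to the $G/H$-action on $Z/H$ via $(\psi,\mathrm{id})$, this exhibits $f$ as a quotient map equivalent to $\pi$, as required.

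The argument is largely bookkeeping once $f=\pi\circ\psi^{-1}$ is in hand, so I expect the only real care to be needed in two places: (i) verifying that Remark \ref{quotient} genuinely applies to $g$, which rests on the isomorphic action of the subgroup $H$ together with the bi-uniform continuity of $g$; and (ii) checking that the transported action on $Y$ is isomorphic and that $(\psi,\mathrm{id})$ is a genuine equivalence in the strong sense of Definition \ref{equivdef}, rather than merely a covering equivalence of the weaker, traditional kind warned against in Example \ref{strong}.
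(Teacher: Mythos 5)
Your proposal is correct and follows essentially the same route as the paper: identify $g$ with the quotient map $Z\rightarrow Z/H$ via Remark \ref{quotient}, transport the isomorphic action of $G/H$ to $Y$ (the paper writes this as $kH(g(z))=g(k(z))$), and verify that the point preimages of $f$ are exactly the $G/H$-orbits. Your write-up is merely more explicit about the intermediate uniform homeomorphism $\psi$ and the equivalence pair $(\psi,\mathrm{id})$, which the paper leaves implicit.
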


\begin{proof}
By assumption, $g$ is the quotient map $g:Z\rightarrow Y=Z/H$. We know
already that $G/H$ acts isomorphically on $Y=Z/H$ via the action $%
kH(g(z))=g(k(z))$. Therefore we need only show that the point preimages of $%
f $ are the orbits of $G/H$. In fact, $f(g(x))=f(g(y))\Leftrightarrow
h(x)=h(y) $, which is equivalent to $k(x)=y$ for some $k\in G$, which in
turn is equivalent to $kH(g(x))=g(y)$.
\end{proof}

\begin{definition}
As usual, if $G$ is a group of bijections of a set $Y$ and $C\subset Y$, the 
\textit{stabilizer subgroup} of $C$ is the subgroup $S_{C}$ of $G$
consisting of all $g\in G$ such that $g(C)=C$. We will say that $C$ has 
\textit{Property ST} if whenever $g\in G$, if $g(C)\cap C\neq \varnothing $
then $g\in S_{C}$.
\end{definition}

\begin{lemma}
\label{cc}Suppose that a group $G$ acts isomorphically on a uniform space $Y$%
.

\begin{enumerate}
\item If $C$ is any chain component of $Y$ (resp. $E$-chain component of $Y$
for some invariant entourage $E$) then $C$ has Property ST.

\item If $C$ is uniformly $E$-open for some entourage $E$ and the
restriction $f_{C}$ of the quotient map $f:Y\rightarrow X=Y/G$ to $C$ is
surjective then:

\begin{enumerate}
\item $f_{C}$ is bi-uniformly continuous.

\item If $C$ has property ST then $f_{C}:C\rightarrow X=Y/S_{C}$ is a
quotient map.
\end{enumerate}
\end{enumerate}
\end{lemma}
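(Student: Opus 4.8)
The plan is to prove the three assertions of Lemma~\ref{cc} in order, drawing on the basic facts about isomorphic actions, chain components, and the quotient uniformity recalled earlier. The first part asserts Property ST for chain components (and $E$-chain components for invariant $E$); the second part establishes bi-uniform continuity of the restriction $f_C$ when $C$ is uniformly $E$-open and $f_C$ is onto; and part 2(b) identifies $f_C$ as a quotient map via the stabilizer subgroup when $C$ additionally has Property ST.

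**For part 1**, I would use the key structural fact that $g \in G$ acts by a uniform homeomorphism (since the action is isomorphic), so $g$ carries $E$-chain connected sets to $E$-chain connected sets whenever $E$ is invariant. Suppose $C$ is an $E$-chain component and $g(C) \cap C \neq \varnothing$. Since $g$ is a uniform homeomorphism preserving the invariant entourage $E$, the image $g(C)$ is again $E$-chain connected; and because $C$ and $g(C)$ share a point and are both $E$-chain connected, their union $C \cup g(C)$ is $E$-chain connected. But $C$ is a maximal $E$-chain connected set (it is an $E$-chain component), so $C \cup g(C) = C$, forcing $g(C) \subseteq C$. Applying the same argument to $g^{-1}$ gives $g^{-1}(C) \subseteq C$, i.e. $C \subseteq g(C)$, so $g(C) = C$ and $g \in S_C$. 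The chain-component case follows by taking the intersection over all invariant $E$, or equivalently by running the identical argument with ``$E$-chain connected'' replaced by ``chain connected.''

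**For part 2(a)**, $f_C$ is automatically uniformly continuous as a restriction of the uniformly continuous $f$; the content is that $f_C$ is \emph{bi}-uniformly continuous, i.e. surjective (assumed) and that $f_C(F \cap (C \times C))$ is an entourage in $X$ for every entourage $F$ in $Y$. Here I expect the uniform openness of $C$ to be exactly what is needed: because $C$ is uniformly $E$-open, for every point $z \in C$ the ball $B(z,E)$ lies entirely in $C$, so the restriction to $C$ of a small enough entourage captures all nearby pairs, and the quotient uniformity on $X = Y/G$ (whose entourages are precisely the $f(F)$ for entourages $F$ in $Y$) is recovered by pushing forward entourages supported on $C$. \textbf{This is the step I expect to be the main obstacle}: one must verify that shrinking an entourage to its trace on $C$ does not lose any image pairs, which requires combining the uniform $E$-openness of $C$ with the invariance of the entourages and the fact that $f_C$ is onto, so that every pair in an entourage of $X$ has a representative pair inside $C \times C$. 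I would argue that given an entourage $F$ of $Y$, choosing an invariant $F' \subseteq F \cap E$ and using that each point of $X$ has a preimage in $C$ together with uniform $E$-openness lets me realize $f(F')$ as $f_C$ of an entourage of $C$, establishing that $f_C$ pushes entourages forward onto a basis of the quotient uniformity.

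**For part 2(b)**, by Remark~\ref{quotient} it suffices to show that $f_C$ is bi-uniformly continuous (established in 2(a)) and that its point preimages within $C$ are exactly the orbits of $S_C$. For $c, c' \in C$ with $f_C(c) = f_C(c')$, there is $g \in G$ with $g(c) = c'$; then $g(C) \cap C \ni c' \neq \varnothing$, so Property ST forces $g \in S_C$, and hence $c' \in S_C \cdot c$. Conversely any $S_C$-orbit clearly lies in a single fiber of $f_C$. Since $S_C$ acts isomorphically on $C$ (being a subgroup of $G$ restricted to the invariant, uniformly open, hence itself-uniform subspace $C$), Remark~\ref{quotient} identifies $f_C$ as the quotient map $C \to C/S_C = Y/S_C$, where the final equality reflects that $C$ meets every $G$-orbit and $S_C$ is precisely the part of $G$ preserving $C$. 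This completes the proof, with the only genuinely delicate point being the bi-uniform continuity verification in 2(a).
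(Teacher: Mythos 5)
Your proposal is correct and follows essentially the same route as the paper: part 1 via the fact that group elements carry ($E$-)chains to ($E$-)chains, part 2a via the combination of surjectivity of $f_C$, invariance of a small entourage $F\subset E$, and uniform $E$-openness of $C$ to show $f(F)\subset f(F\cap(C\times C))$, and part 2b via Property ST together with Remark~\ref{quotient}. The only difference is cosmetic: you phrase part 1 through maximality of chain components rather than pushing chains directly, and your 2a is a sketch whose listed ingredients assemble exactly as in the paper's proof.
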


\begin{proof}
For the first part, suppose that for some $x$ and $g\in G$, $g(x)\in C$. For
any $z\in C$ there is an arbitrarily fine chain (resp. an $E$-chain) $\alpha 
$ from $x$ to $z$. Then $g(\alpha )$ is an arbitrarily fine chain (resp. $E$%
-chain) in $Y$ from $g(x)$ to $g(z)$, so $g(z)\in C$, i.e. $g(C)\subset C$.
Since $g$ is a bijection, for every $y\in C$ there is some $w\in Y$ such
that $g(w)=y$. But by what we just showed, $w$ cannot lie in a different
component (resp. $E$-chain component) of $Y$. Therefore, $C\subset g(C)$.

For Part 2a, note that by elementary results $f_{C}$ is uniformly continuous
with respect to the subspace uniformity. Let $F\subset E$ be an invariant
entourage, and note that $C$ is also uniformly $F$-open. We will show that $%
f(F)\subset f(F\cap \left( C\times C\right) )$, showing that the latter is
an entourage, proving the claim. Let $(x,y)\in f(F)$. So there exists $%
(x^{\prime },y^{\prime })\in F$ such that $(f(x^{\prime }),f(y^{\prime
}))=(x,y)$. Since $f_{C}$ is surjective there is some $x^{\prime \prime }\in
C$ such that $f(x^{\prime \prime })=x$. Therefore there is some $g\in G$
such that $g(x^{\prime })=x^{\prime \prime }$. Since $F$ is invariant, $%
(x^{\prime \prime },g(y^{\prime }))\in F$. Since $C$ is uniformly $F$-open, $%
g(y^{\prime })\in C$. That is, $(x^{\prime \prime },g(y^{\prime }))\in F\cap
\left( C\times C\right) $ and since $f((x^{\prime \prime },g(y^{\prime
}))=(x,y)$, $(x,y)\in f(F\cap \left( C\times C\right) )$.

For Part 2b we need only show that the orbits of $S_{C}$ are the point
preimages of $f_{C}$. Clearly if $z=g(w)$ for some $z,w\in C$ and $g\in
S_{C}\subset G$, $f(z)=f(w)$. Conversely, suppose the latter equation holds
for $z,w\in C$. Then there is some $g\in G$ such that $g(z)=w$. Since $%
z,w\in C$ and $C$ has property ST, $g\in S_{C}$.
\end{proof}

\section{Uniformizing Regular Covering Maps}

There is an unfortunate variety of terminologies involving actions on
topological spaces by discrete groups, see for example \cite{Kap} for a
discussion. We will say that a group $G$ of homeomorphisms of a metrizable
topological space $Y$ \textit{acts properly discontinuously} if (1) for
every $y\in Y$ there is some open set $U$ such that if $g\in G$ and $%
g(U)\cap U\neq \varnothing $ then $g=1$; and (2) the orbit space $X=Y/G$
with the quotient topology is Hausdorff.

For spaces that are not Poincar\'{e} spaces, the notion of \textquotedblleft
regular covering map\textquotedblright\ is problematic since this is
typically defined in terms of the traditional mapping from the group of deck
transformations into the fundamental group, which requires paths. The
equivalent notion, that the deck (covering transformation) group is
\textquotedblleft transitive on the fibers\textquotedblright , simply means
that the map is a quotient map. Therefore we will take our definition of a 
\textit{regular covering map} between topological spaces to be that the map
is a covering map in the usual sense and also a quotient map via a free
action by group of homeomorphisms. Recall that a covering map $%
f:X\rightarrow Y$ by definition has the property that every $x\in X$ is
contained in an \textit{evenly covered} open set $U$, i.e. $f^{-1}(U)$ is a
disjoint collection of open sets, the restriction to any of which is a
homeomorphism onto $U$.

\begin{definition}
Suppose $G$ is a group of bijections on a set $Y$. The PD-domain $P$ of the
action is the set of all $(x,y)\in Y\times Y$ such that if $y=g(x)$ for some 
$g\in G$ then $x=y$. If $R$ is a symmetric, invariant open set containing
the diagonal in $Y\times Y$ such that $R^{2}\subset P$ then $R$ is called a
root domain.
\end{definition}

Put another way, the PD-domain consists of all ordered pairs $(y,z)$ such
that if $y\neq z$, $z$ does not lie in the orbit of $y$. Clearly the
PD-domain is a symmetric, invariant subset of $Y\times Y$ containing the
diagonal. If the action is free then $x=y$ is strengthened to $g=1$.

If $R$ is a root domain then of course any symmetric open set containing the
diagonal that is contained in $R$ is again a root domain.

\begin{example}
Consider the action of $\mathbb{Z}$ on $\mathbb{R}$ generated by $x\mapsto
x+1$. As can easily be checked, the PD-domain $P$ of this action consists of
the plane, removing all lines that are shifts of the diagonal line
vertically by an integer different from $0$. But it is simpler to look at
the $P$-balls, which determine $P$. $B(0,P)$ consists of $\mathbb{R}$ with
all non-zero integers removed. For simplicity, give $\mathbb{R}$ its
standard metric and suppose that $R=E_{\varepsilon }$ is a root domain for
some $\varepsilon >0$. Since the metric is geodesic, $B(x,R^{2})=B(x,2%
\varepsilon )$, and we must have $B(0,2\varepsilon )\subset B(0,P)$, which
forces $\varepsilon \leq \frac{1}{2}$. That is, even though $P$ is in a
sense very large (it is dense in $\mathbb{R}^{2}$), the requirement that $%
R^{2}\subset P$ forces $R$ to be relatively small.
\end{example}

\begin{definition}
Suppose $G$ acts properly discontinuously on a metrizable topological space $%
Y$ with quotient map $f:Y\rightarrow X=Y/G$. A symmetric open set $E$
containing the diagonal in $X\times X$ is called evenly covered with respect
to a root domain $R$ if $E\subset f(R)$. If $E$ is evenly covered with
respect to some $R$, we simply say that $E$ is evenly covered.
\end{definition}

If $R_{1}\subset R_{2}$ are root domains and $E$ is evenly covered with
respect to $R_{1}$ and $R_{2}$ then by elementary set theory 
\begin{equation}
E_{R_{1}}^{\ast }=E_{R_{2}}^{\ast }\cap R_{1}\subset E_{R_{2}}^{\ast }\text{.%
}  \label{nest}
\end{equation}

\begin{proposition}
\label{domain}Let $G$ be a group of homeomorphisms on a metrizable
topological space $Y$ that acts properly discontinuously with quotient map $%
f:Y\rightarrow X=Y/G$. Let $P$ be the PD-domain of the action. Then

\begin{enumerate}
\item If $(x,y)\in P$ and $x\neq y$ then $f(x)\neq f(y)$.

\item If $E$ is a symmetric open set containing the diagonal in $X$ and $R$
is a root domain in $Y$, then $E_{R}^{\ast }:=f^{-1}(E)\cap R$ is a
symmetric, invariant open set containing the diagonal in $Y$, called the
lift of $E$ with respect to $R$. Moreover, if $E$ is evenly covered with
respect to $R$ then $f(E_{R}^{\ast })=E$.
\end{enumerate}
\end{proposition}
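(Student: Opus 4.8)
For Part 1, I would simply unwind the definition of the PD-domain together with the fact that the point-preimages of the quotient map are the orbits of $G$, i.e. $f(x)=Gx$. Thus $f(x)=f(y)$ holds exactly when $Gx=Gy$, equivalently when $y=g(x)$ for some $g\in G$. If $(x,y)\in P$, then by the definition of $P$ the existence of such a $g$ forces $x=y$; hence when $x\neq y$ no such $g$ exists, so $f(x)\neq f(y)$. This step is purely formal and uses nothing beyond the definitions.

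For the first assertion of Part 2, I would verify each of the four listed properties of $E_{R}^{\ast}=f^{-1}(E)\cap R$ in turn, keeping in mind the conventions that $f^{-1}(E)$ abbreviates $(f\times f)^{-1}(E)$ and $g(W)$ abbreviates $(g\times g)(W)$. Openness is immediate, since $f$ is continuous (being a quotient map), so $f^{-1}(E)$ is open, and $R$ is open by hypothesis. Symmetry and containment of the diagonal are inherited from the corresponding properties of $E$ (a symmetric open set containing the diagonal in $X\times X$) and of $R$. For invariance, the key observation is that $f\circ g=f$ for every $g\in G$, so that $(f\times f)\circ(g\times g)=f\times f$; this yields $(g\times g)(f^{-1}(E))=f^{-1}(E)$, and combined with the invariance of $R$ built into the definition of a root domain, the intersection $E_{R}^{\ast}$ is invariant. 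I note that this assertion does not actually use the defining condition $R^{2}\subset P$ of a root domain; only that $R$ is symmetric, invariant, open, and contains the diagonal.

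For the final assertion $f(E_{R}^{\ast})=E$, I would prove two inclusions. The inclusion $f(E_{R}^{\ast})\subseteq E$ is automatic, since $f(E_{R}^{\ast})\subseteq f(f^{-1}(E))\subseteq E$, because $(f\times f)((f\times f)^{-1}(E))\subseteq E$ for any map. The reverse inclusion $E\subseteq f(E_{R}^{\ast})$ is the only place where the even-covering hypothesis enters, and I regard it as the heart of the statement. Given $(a,b)\in E$, the hypothesis $E\subseteq f(R)$ furnishes a pair $(x,y)\in R$ with $(f(x),f(y))=(a,b)$; but then $(x,y)\in f^{-1}(E)$ as well, so $(x,y)\in f^{-1}(E)\cap R=E_{R}^{\ast}$, whence $(a,b)=(f(x),f(y))\in f(E_{R}^{\ast})$. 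The main (indeed only) obstacle is conceptual rather than technical: one must correctly track the product-notation conventions and recognize that all the content is already encoded in the definition of \emph{evenly covered}. Everything else is formal verification.
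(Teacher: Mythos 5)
Your proposal is correct and follows essentially the same route as the paper: Part 1 by unwinding the definitions of the quotient map and the PD-domain, invariance of $E_{R}^{\ast}$ via $f\circ g=f$ combined with invariance of $R$, the forward inclusion $f(E_{R}^{\ast})\subset E$ by elementary set theory, and the reverse inclusion by pulling $(a,b)\in E\subset f(R)$ back to a pair in $R$ that automatically lies in $f^{-1}(E)$. Your side remark that the first assertion of Part 2 never uses $R^{2}\subset P$ is accurate and consistent with the paper's argument.
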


\begin{proof}
If $(x,y)\in P$ and $f(x)=f(y)$ then there is some $g\in G$ such that $%
g(x)=y $, which is impossible by definition of $P$ unless $x=y$.

For the second part, note that if $W$ is any subset of $X\times X$ then $%
f^{-1}(W)$ is invariant with respect to the action of $G$. In fact, if $%
(x,y)\in f^{-1}(W)$ then $(f(x),f(y))\in W$. But for any $g\in G$, $%
f(g(x))=f(x)$ and $f(g(y))=f(y)$. Since $(f(x),f(y))\in W$, $(g(x),g(y))\in
f^{-1}(W)$. Invariance of $E_{R}^{\ast }$ now follows from the fact that $R$
is invariant. From elementary set theory, $f(E_{R}^{\ast })=f(f^{-1}(E)\cap
R)\subset E\cap f(R)\subset E$. Finally, suppose that $(x,y)\in E\subset
f(R) $; that is, there is some $(x^{\prime },y^{\prime })\in R$ such that $%
f(x^{\prime })=x$ and $f(y^{\prime })=y$. But since $(x,y)\in E$, $%
(x^{\prime },y^{\prime })\in f^{-1}(E)$ and hence $(x^{\prime },y^{\prime
})\in E_{R}^{\ast }$. That is, $(x,y)\in f(E_{R}^{\ast })$.
\end{proof}

\begin{proposition}
\label{evenly}Let $G$ be a group of homeomorphisms on a metrizable
topological space $Y$ that acts properly discontinuously with quotient map $%
f:Y\rightarrow X=Y/G$. If $E$ is evenly covered in $X$ with respect to some
root domain $R$, then $E$-balls are evenly covered by $E_{R}^{\ast }$-balls.
\end{proposition}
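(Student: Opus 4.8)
The plan is to fix a point $x \in X$ together with a point $\tilde{x} \in f^{-1}(x)$, and to show that the open set $B(x,E)$ is evenly covered, with the sheets lying over it being exactly the balls $B(\tilde{x},E_{R}^{\ast })$ as $\tilde{x}$ ranges over the fiber $f^{-1}(x)$. Note first that $B(x,E)$ and each $B(\tilde{x},E_{R}^{\ast })$ are open, being slices of the open sets $E$ and $E_{R}^{\ast }$; and recall from Proposition \ref{domain} that $E_{R}^{\ast }$ is symmetric and invariant, and that $f(E_{R}^{\ast })=E$ precisely because $E$ is evenly covered with respect to $R$.

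First I would show that $f$ maps $B(\tilde{x},E_{R}^{\ast })$ onto $B(x,E)$. The inclusion $f(B(\tilde{x},E_{R}^{\ast }))\subset B(x,E)$ is immediate from $E_{R}^{\ast }\subset f^{-1}(E)$. For the reverse inclusion, given $y\in B(x,E)$ I use $f(E_{R}^{\ast })=E$ to produce a pair $(\tilde{a},\tilde{b})\in E_{R}^{\ast }$ with $f(\tilde{a})=x$ and $f(\tilde{b})=y$; choosing $g\in G$ with $g(\tilde{a})=\tilde{x}$ and applying invariance of $E_{R}^{\ast }$ moves this pair to $(\tilde{x},g(\tilde{b}))\in E_{R}^{\ast }$ with $f(g(\tilde{b}))=y$, so $y$ is attained. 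The same invariance trick, applied to an arbitrary $\tilde{y}\in f^{-1}(B(x,E))$, shows that $f^{-1}(B(x,E))=\bigcup_{\tilde{x}\in f^{-1}(x)}B(\tilde{x},E_{R}^{\ast })$.

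The key structural step is to exploit the root-domain condition $R^{2}\subset P$. For injectivity of $f$ on a single ball: if $\tilde{y},\tilde{z}\in B(\tilde{x},E_{R}^{\ast })$ then, by symmetry of $R$, $(\tilde{y},\tilde{z})\in R^{2}\subset P$, so if $f(\tilde{y})=f(\tilde{z})$, i.e.\ $\tilde{z}=g(\tilde{y})$ for some $g\in G$, the definition of the PD-domain $P$ forces $\tilde{y}=\tilde{z}$. For disjointness of sheets: if $B(\tilde{x}_{1},E_{R}^{\ast })\cap B(\tilde{x}_{2},E_{R}^{\ast })\neq \varnothing$ with $f(\tilde{x}_{1})=f(\tilde{x}_{2})=x$, a common point again places $(\tilde{x}_{1},\tilde{x}_{2})\in R^{2}\subset P$, and since the $\tilde{x}_{i}$ lie in a common orbit, $P$ forces $\tilde{x}_{1}=\tilde{x}_{2}$. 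Thus the balls over distinct fiber points are disjoint, and $f$ restricts to a continuous bijection from each of them onto $B(x,E)$.

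Finally, since $f$ is the quotient map of a group action it is open, so each continuous open bijection $f|_{B(\tilde{x},E_{R}^{\ast })}\colon B(\tilde{x},E_{R}^{\ast })\rightarrow B(x,E)$ is a homeomorphism; together with the disjoint-union decomposition of $f^{-1}(B(x,E))$ above, this is exactly the assertion that $B(x,E)$ is evenly covered by the $E_{R}^{\ast }$-balls. I expect the only real subtlety to be the repeated invariance argument that pins the lifted pairs into the specific ball $B(\tilde{x},E_{R}^{\ast })$ rather than an arbitrary sheet; everything else follows formally from $f(E_{R}^{\ast })=E$ and $R^{2}\subset P$.
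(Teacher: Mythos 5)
Your proposal is correct and follows essentially the same route as the paper's proof: use $(E_{R}^{\ast })^{2}\subset R^{2}\subset P$ together with Proposition \ref{domain} for disjointness of sheets and injectivity on each ball, and use $f(E_{R}^{\ast })=E$ plus invariance of $E_{R}^{\ast }$ to move a lifted pair into the chosen ball for surjectivity. The only additions beyond the paper's argument are the (routine) verification that $f^{-1}(B(x,E))$ is exactly the union of the fiber balls and the remark that openness of the quotient map upgrades the continuous bijection to a homeomorphism.
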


\begin{proof}
Suppose $y,y^{\prime }\in f^{-1}(x)$ and there is some $w\in B(y,E_{R}^{\ast
})\cap B(y^{\prime },E_{R}^{\ast })$. Then $(y,y^{\prime })\in \left(
E_{R}^{\ast }\right) ^{2}=(f^{-1}(E)\cap R)^{2}\subset R^{2}\subset P$,
where $P$ is the PD-domain. Since $f(y)=x=f(y^{\prime })$, by Proposition %
\ref{domain}, $y=y^{\prime }$. Now let $h$ be the restriction of $f$ to some 
$B(y,E_{R}^{\ast })$ with $y\in f^{-1}(x)$. By Proposition \ref{domain}, $%
f(B(y,E_{R}^{\ast }))\subset B(x,E)$. Let $w\in B(x,E)$. By Proposition \ref%
{domain} there exist $\left( y^{\prime },z^{\prime }\right) \in E_{R}^{\ast
} $ with $f(y^{\prime })=x$ and $f(z^{\prime })=w$. Since $f(y^{\prime
})=x=f(y)$ there is some $g\in G$ such that $g(y^{\prime })=y$; letting $%
z:=g(z^{\prime })$ we have $f(z)=f(g(z^{\prime }))=f(z^{\prime })=w$. Since $%
E_{R}^{\ast }$ is invariant, $z\in B(y,E_{R}^{\ast })$, showing that $h$ is
onto $B(x,E)$. Finally, suppose that $z_{1},z_{2}\in B(y,E_{R}^{\ast })$ and 
$f(z_{1})=f(z_{2})$. Then $(z_{1},z_{2})\in (E_{R}^{\ast })^{2}\subset
R^{2}\subset P$ and by Proposition \ref{domain}, $z_{1}=z_{2}$.
\end{proof}

We will use the next proposition frequently without reference.

\begin{proposition}
Suppose $f:Y\rightarrow X=Y/G$ is a quotient map between metric spaces. Then 
$f$ is a regular covering map if and only if $G$ acts properly
discontinuously and has a root domain.
\end{proposition}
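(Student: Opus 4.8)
The plan is to prove the two implications separately, using Propositions \ref{domain} and \ref{evenly} for the ``if'' direction and a direct construction for the ``only if'' direction.

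For the ``if'' direction, assume $G$ acts properly discontinuously with a root domain $R$. First I would observe that proper discontinuity already forces the action to be free: if $g(y)=y$ and $U\ni y$ is the open set from condition (1), then $y\in g(U)\cap U$, so $g=1$. Next, since $f$ is a quotient map via a group of homeomorphisms it is continuous and open, so $f\times f$ is open and $E:=f(R)$ is a symmetric open neighborhood of the diagonal in $X\times X$ that is evenly covered with respect to $R$ (trivially $E\subset f(R)$). Proposition \ref{evenly} then shows that for each $x\in X$ the balls $B(y,E_R^{\ast})$, $y\in f^{-1}(x)$, are pairwise disjoint and each is mapped bijectively onto $B(x,E)$ by $f$; openness and continuity of $f$ upgrade these bijections to homeomorphisms. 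Using $E\subset f(R)$ and the invariance of $E_R^{\ast}$ (Proposition \ref{domain}) I would check that $f^{-1}(B(x,E))$ is exactly the union of these balls, so $B(x,E)$ is an evenly covered open neighborhood of $x$. Hence $f$ is a covering map, and being also a quotient via a free action it is a regular covering map.

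For the ``only if'' direction, assume $f$ is a regular covering map. Condition (2) of proper discontinuity is immediate since $X$ is metric, hence Hausdorff. For condition (1), given $y$ I would take $U$ to be a sheet over an evenly covered neighborhood of $f(y)$ containing $y$: if $g(U)\cap U\neq\varnothing$, a common point $w=g(u)$ with $u,w\in U$ satisfies $f(u)=f(w)$, so $u=w$ by injectivity of $f|_U$, giving a fixed point and thus $g=1$ by freeness. So the action is properly discontinuous.

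It remains to produce a root domain, which I expect to be the main obstacle. The first step is to show the PD-domain $P$ is open. Since $X$ is Hausdorff, the fibre product $Y\times_X Y=(f\times f)^{-1}(\Delta_X)$ is closed in $Y\times Y$, and because $f$ is a local homeomorphism the diagonal $\Delta_Y$ is open in $Y\times_X Y$ (over a sheet $U$ one has $(U\times U)\cap(Y\times_X Y)=\Delta_U$); as $\Delta_Y$ is also closed ($Y$ Hausdorff) it is clopen there, so $(Y\times_X Y)\setminus\Delta_Y$ is closed in $Y\times Y$. Its complement is precisely $P$ (using $f(a)=f(b)\Leftrightarrow b\in Ga$), so $P$ is open, symmetric and invariant. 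The delicate point is then to manufacture an \emph{invariant} open symmetric $R\supset\Delta_Y$ with $R^2\subset P$; one cannot simply ``halve'' a neighborhood of the diagonal, and the metric on $Y$ need not be $G$-invariant. The idea I would pursue is to transport the gauge downstairs: the function $\rho(x)=\sup\{r:B(x,r)\text{ is evenly covered}\}$ is positive and $1$-Lipschitz on $X$, so $\lambda:=\min(1,\rho/2)$ gives evenly covered balls $B(x,\lambda(x))$, and $\lambda\circ f$ is a $G$-invariant continuous gauge on $Y$. Fixing a $G$-invariant choice of sheets over each ball $B(x,\lambda(x))$, I would define $R$ to consist of those pairs $(a,b)$ with $d(f(a),f(b))<\tfrac12\min(\lambda(f(a)),\lambda(f(b)))$ that lie in a common chosen sheet over the balls centred at both $f(a)$ and $f(b)$. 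Invariance, symmetry, openness and $\Delta_Y\subset R$ then follow from the $G$-invariance of the gauge and of the chosen sheets; the key point for $R^2\subset P$ is that for a chain $a\to b\to c$ with $f(a)=f(c)=x$, both the $a$--side sheet of the first pair and the $c$--side sheet of the second pair live over the \emph{same} ball $B(x,\lambda(x))$ and both contain $b$, so disjointness of sheets forces them to coincide and injectivity of $f$ on that sheet gives $a=c$. The genuinely subtle part, which I would treat most carefully, is arranging the sheet choice to be simultaneously $G$-invariant and rigid enough when $X$ is not locally connected (e.g. totally disconnected bases such as solenoid fibres), so that ``same sheet'' is globally consistent and $R$ does not accidentally connect distinct points of a single fibre.
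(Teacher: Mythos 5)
Your ``if'' direction and your verification of proper discontinuity in the ``only if'' direction are correct and essentially coincide with the paper's argument (the paper likewise treats the converse as an immediate consequence of Proposition \ref{evenly}, and uses the same one-sheet argument for condition (1) of proper discontinuity). The genuine gap is exactly where you suspected it: the construction of the root domain. Two things are missing. First, you assert without proof that a $G$-invariant choice of sheets over each ball $B(x,\lambda(x))$ exists; this is in fact true (take the orbit $\{g(W_{0})\}_{g\in G}$ of a single sheet $W_{0}$: freeness and injectivity of $f$ on $W_{0}$ make these pairwise disjoint, and transitivity of $G$ on fibers makes them cover the preimage of the ball), but it needs an argument. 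Second, and fatally, your $R$ is not shown to be open, and with adversarial sheet choices it genuinely is not. Membership of $(a,b)$ in your $R$ is tested against the chosen decompositions of the two balls centred at $f(a)$ and $f(b)$, and these decompositions were chosen independently for each centre. When the balls are disconnected, the decomposition over $B(f(a^{\prime}),\lambda(f(a^{\prime})))$ for $a^{\prime}$ arbitrarily close to $a$ can mix-and-match pieces of the sheets over $B(f(a),\lambda(f(a)))$, so a pair lying in a common sheet over the latter can have arbitrarily close neighbours $(a^{\prime},b^{\prime})$ that are separated over the former. (Concretely: $X$ a Cantor set, $Y=X\times\{0,1\}$, $G=\mathbb{Z}/2$, with locally constant sheet choices varying wildly in the centre.) Since a root domain is by definition open, this is not cosmetic, and passing to the interior of your $R$ does not obviously retain the diagonal. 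Your argument that $R^{2}\subset P$ is fine, and in fact rather clean, precisely because there both pairs refer to the single decomposition over the one ball $B(x,\lambda(x))$.

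The paper sidesteps the coherence problem by indexing the patches by points upstairs rather than by centres downstairs: with $\varepsilon_{x}$ the maximal evenly covered radius at $x$ (your $\rho$), it lets $U_{y}$ be the sheet over $B(f(y),\varepsilon_{f(y)}/3)$ containing $y$ and defines $R:=\{(w,z):w,z\in U_{y}\text{ for some }y\in Y\}=\bigcup_{y\in Y}U_{y}\times U_{y}$. This is a union of open boxes, hence open with no compatibility required between decompositions over different balls; invariance follows from $g(U_{y})=U_{g(y)}$; and $R^{2}\subset P$ follows from the $1$-Lipschitz estimate $\varepsilon_{x_{1}}\geq\varepsilon_{x_{2}}-d(x_{1},x_{2})$, which forces two overlapping boxes $U_{y_{1}}$, $U_{y_{2}}$ into a single sheet over one common evenly covered ball. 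If you want to keep your formulation, the repair is to replace ``common chosen sheet over the balls centred at $f(a)$ and $f(b)$'' by ``common $U_{y}$ for some $y\in Y$,'' which is precisely the paper's $R$.
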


\begin{proof}
Suppose that $f$ is a regular covering. Since $Y$ is assumed to be metric,
hence Hausdorff, we need only verify the first condition in the definition
of proper discontinuous action. Let $y\in Y$. Then there is some evenly
covered open set $V$ containing $f(y)$. Let $U$ be an open subset of $%
f^{-1}(V)$ containing $y$ such that $U$ contains $x$ and the restriction of $%
f$ to $U$ is 1-1. If $z\in g(U)\cap U$ then $z$ and $g^{-1}(z)$ both lie in $%
U$ and by choice of $U$, $z=g^{-1}(z)$ and $g=1$.

For every $x\in X$ there is some maximal $\varepsilon _{x}>0$ such that $%
B(x,\varepsilon _{x})$ is evenly covered. By maximality and the triangle
inequality, for any $x_{1},x_{2}\in X$, 
\begin{equation}
\varepsilon _{x_{1}}\geq \varepsilon _{x_{2}}-d(x_{1},x_{2})\text{.}
\label{teqf}
\end{equation}

For $y\in Y$ there is a unique open set $U_{y}$ containing $y$ such that the
restriction of $f$ to $U_{y}$ is a homeomorphism onto $B(f(y),\frac{%
\varepsilon _{f(y)}}{3})$. Define $R:=\{(w,z):w,z\in U_{y}$ for some $y\in
Y\}$, which is clearly a symmetric open set containing the diagonal. Now
suppose that $g\in G$. Then $f(g(U_{y}))=f(U_{y})=B(f(y),\frac{\varepsilon
_{f(y)}}{3})$ and since the restriction of $f$ to $U_{y}$ is a bijection
onto $B(f(y),\frac{\varepsilon _{f(y)}}{3})$, so is the restriction of $f$
to $g(U_{y})$. That is, $g(U_{y})=U_{g(y)}$ and $R$ is invariant.

Now suppose that $(w_{1},w_{2})\in R^{2}$. That is, for some $z$, $%
(w_{1},z),(w_{2},z)\in R$. This in turn means that for some $y_{1},y_{2}$, $%
(w_{i},z)\in U_{y_{i}}$, for $i=1,2$. Therefore $f(w_{i}),f(z)\in B(y_{i},%
\frac{\varepsilon _{y_{i}}}{3})$. From Formula (\ref{teqf}) we have $%
\varepsilon _{f(z)}\geq \varepsilon _{f(y_{i})}-d(f(z),f(y_{i}))>\varepsilon
_{f(y_{i})}-\frac{\varepsilon _{f(y_{i})}}{3}=\frac{2\varepsilon _{f(y_{i})}%
}{3}$. We also have $d(f(w_{i}),f(z))<\frac{2\varepsilon _{f(y_{i})}}{3}$
and therefore $f(w_{i})\in B(f(z),\varepsilon _{f(z)})$. Since the latter
ball is evenly covered, there is an open set $V$ containing $z$ such that
the restriction of $f$ to $V$ is 1-1. Since $U_{y_{i}}$ contains $z$ and
maps into $B(f(z),\varepsilon _{f(z)})$ it must be contained in $V$.
Therefore both $w_{1}$ and $w_{2}$ lie in $V$ and it is impossible for a
non-trivial $g\in G$ to have the property that $g(w_{1})=w_{2}$. That is $%
R^{2}$ is contained in the PD-domain and by definition $R$ is a root domain.

The converse is an immediate consequence of Proposition \ref{evenly}.
\end{proof}

\begin{theorem}[Uniformizing Regular Covering Maps]
\label{uniformize}Let $f:Y\rightarrow X=Y/G$ be a regular covering map
between metrizable spaces. Then:

\begin{enumerate}
\item If $Y$ is given a (compatible) uniform structure such that some root
domain is an entourage in $Y$ (such a uniformity always exists) then

\begin{enumerate}
\item $Y$ has a countable basis $\mathcal{R}$ consisting of root domains.

\item The set of all $f(R)$ with $R\in \mathcal{R}$ is a basis for a uniform
structure on $X$ compatible with the quotient topology.
\end{enumerate}

\item If $X$ has a uniform structure compatible with the quotient topology
such that $f(R)$ is an entourage for some root domain $R$ in $Y$ (such a
uniformity always exists), then:

\begin{enumerate}
\item There is a countable basis $\mathcal{E}$ for the uniformity of $X$
consisting of evenly covered entourages with respect to $R$.

\item For any root domain $R$, the set of all $E_{R}^{\ast }$ for $E\in 
\mathcal{E}$ is an invariant basis for a (compatible) uniform structure on $%
Y $, called the lifted uniformity, such that $G$ acts discretely.

\item The lifted uniformity is unique in the sense that if $R^{\prime }$ is
any other root domain such that $f(R^{\prime })$ is an entourage in $X$ then
the lifted uniformities with respect to $R$ and $R^{\prime }$ are uniformly
equivalent.
\end{enumerate}
\end{enumerate}
\end{theorem}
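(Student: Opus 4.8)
The plan is to treat Parts 1 and 2 as two directions of the same transfer mechanism between the uniformity upstairs on $Y$ and downstairs on $X$, built from three workhorse facts that I would establish first and use throughout. Since $f$ is a covering map it is open, so $f\times f$ is open and $f(R)$ is an open, symmetric neighborhood of the diagonal in $X\times X$ whenever $R$ is. Next, by Proposition \ref{domain}(1) together with $R^{2}\subset P$, the map $f$ is injective on every root-domain ball $B(y,R)$: if $z_{1},z_{2}\in B(y,R)$ and $f(z_{1})=f(z_{2})$ then $(z_{1},z_{2})\in R^{2}\subset P$ forces $z_{1}=z_{2}$; combined with openness this makes $f|_{B(y,R)}$ a homeomorphism onto its image. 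Finally, Proposition \ref{evenly} says that when $E\subset f(R)$ is evenly covered, $f|_{B(y,E^{\ast }_{R})}$ is a bijection onto $B(f(y),E)$, so the $E^{\ast }_{R}$-balls are exactly the sheets over the $E$-balls.

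For Part 1, I start from the given root domain $R_{0}$ and push neighborhoods down and then back up. Since $X$ is metrizable, I choose a countable basis $\{E_{n}\}$ of open symmetric neighborhoods of the diagonal in $X$ with each $E_{n}\subset f(R_{0})$ (intersect a metric basis with the open set $f(R_{0})$). Setting $R_{n}:=(E_{n})^{\ast }_{R_{0}}=f^{-1}(E_{n})\cap R_{0}$, Proposition \ref{domain} makes each $R_{n}$ symmetric, invariant, and open; since $R_{0}\subset R_{0}^{2}\subset P$ we get $R_{n}^{2}\subset R_{0}^{2}\subset P$, so each $R_{n}$ is a root domain, giving the countable family $\mathcal{R}$ for 1(a). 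That $\mathcal{R}$ is a \emph{basis} for the given uniformity on $Y$ uses the homeomorphism property above: $B(y,R_{n})=(f|_{B(y,R_{0})})^{-1}(B(f(y),E_{n}))$, so the $R_{n}$-balls pull the shrinking $E_{n}$-balls back through a local uniform isomorphism on root-domain balls (which is exactly what the lifted uniformity provides) and hence refine every entourage. For 1(b), each $f(R)$ is an open symmetric neighborhood of the diagonal; the only nonformal axiom is the triangle inequality, which uses invariance crucially: for a root domain $R'$ with $(R')^{2}\subset R$ one checks $f(R')\circ f(R')\subset f((R')^{2})\subset f(R)$ by lifting a composable pair through $R'$ and applying $g(R')=R'$. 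Compatibility with the quotient topology follows from $B(f(y),f(R))=f(B(y,R))$, again a consequence of invariance, which exhibits the $f(R)$-balls as a neighborhood basis.

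For Part 2 the direction is reversed. Given that $f(R)$ is an entourage of the metrizable uniformity on $X$, intersecting a countable metric basis with $f(R)$ yields the countable basis $\mathcal{E}$ of $R$-evenly-covered entourages for 2(a). For 2(b) I set $E^{\ast }_{R}=f^{-1}(E)\cap R$; each is symmetric, invariant, and open by Proposition \ref{domain}, so the family is invariant and the action isomorphic. That it is a \emph{basis} for a uniformity rests on the inclusion $(E'^{\ast }_{R})^{2}\subset E^{\ast }_{R}$ for $E'\in\mathcal{E}$ with $(E')^{2}\subset E$: the inclusion into $f^{-1}(E)$ is immediate, while the inclusion into $R$ is proved via the sheet description of Proposition \ref{evenly} and injectivity of $f$ on $R$-balls, identifying the square with the unique sheet over $B(f(y),E)$ containing $y$. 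Discreteness is then easy: for $F=E^{\ast }_{R}\subset R\subset P$, any $g$ with $(y,g(y))\in F$ for some $y$ satisfies $(y,g(y))\in P$, forcing $g=1$, so $S_{F}(G)=\{1\}$ and $N_{F}(G)=1$. Compatibility with the original topology on $Y$ follows as in Part 1 from the local-homeomorphism description of the $E^{\ast }_{R}$-balls. For the uniqueness statement 2(c), given another root domain $R'$ with $f(R')$ an entourage, I choose a common basis of entourages $E$ with $E\subset f(R)\cap f(R')$; for such $E$ both $B(y,E^{\ast }_{R})$ and $B(y,E^{\ast }_{R'})$ are the sheet of $f^{-1}(B(f(y),E))$ containing $y$, hence equal (for $E$ small, via Proposition \ref{evenly} and Inclusion (\ref{nest}) applied to $R\cap R'$), so the two lifted uniformities share a basis and coincide.

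The main obstacle I expect is the square inclusion $(E'^{\ast }_{R})^{2}\subset E^{\ast }_{R}$ in 2(b), together with the parallel basis claim in 1(a): the $f^{-1}(E)$-part is formal, but showing the square stays inside the root domain $R$ — rather than merely inside $R^{2}\subset P$ — requires the sheet-matching of Proposition \ref{evenly} plus injectivity of $f$ on root-domain balls, and this is where the even-covering hypothesis does the real work. Everything else reduces to openness of $f\times f$, invariance of root domains, and routine verification of the uniform-space axioms.
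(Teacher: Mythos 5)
Your overall architecture --- transferring entourages back and forth via $E_{R}^{\ast }=f^{-1}(E)\cap R$ and $f(R)$, with Propositions \ref{domain} and \ref{evenly} as the workhorses, the triangle inequality as the only non-formal axiom, and $R^{2}\subset P$ driving injectivity and discreteness --- is the same as the paper's. But there are two places where the mechanism you propose does not close. The first is Part 1(a). You build $\mathcal{R}$ by pulling a countable basis $\{E_{n}\}$ of diagonal neighborhoods of $X$ back to $R_{n}=f^{-1}(E_{n})\cap R_{0}$. These are indeed invariant root domains, but nothing forces them to be a basis for the \emph{given} uniformity on $Y$: you need every entourage of $Y$ to contain some $R_{n}$, and the given uniformity need not be comparable, away from the diagonal, to anything pulled back from $X$. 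Concretely, take $Y=\mathbb{R}$, $G=\mathbb{Z}$, $X=S^{1}$, and give $Y$ the compatible metric $d(x,y)=|x^{3}-y^{3}|$. The standard $R_{0}=E_{1/4}$ is still an entourage of this uniformity (if $|x^{3}-y^{3}|<\delta $ and $|x-y|\geq 1/4$ then $x^{2}+y^{2}<8\delta $, which is impossible for small $\delta $), so the hypothesis of Part 1 holds; but every $R_{n}$ contains all pairs $(k,k+\eta )$ with $k\in \mathbb{Z}$ for some fixed $\eta >0$, and no such set lies in $\{(x,y):|x^{3}-y^{3}|<1\}$. Your justification (``$f$ is a local uniform isomorphism on root-domain balls'') is precisely the content of the lifted uniformity of Part 2, which is not yet available and which in general is \emph{finer} than, not equal to, the given one. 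The paper's proof of 1(a) is a one-liner that sidesteps all of this: intersect each member of the given countable basis of $Y$ with a single root domain; the resulting sets are entourages of the given uniformity contained in the original basis elements, hence a basis, and are root domains because they sit inside one.

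The second issue is the inclusion $(F_{R}^{\ast })^{2}\subset E_{R}^{\ast }$ in 2(b). You correctly isolate ``landing back inside $R$'' as the crux, but the sheet-matching you propose does not deliver it. If $(y,c),(c,z)\in F_{R}^{\ast }$, then $B(c,F_{R}^{\ast })$ and the portion of $B(y,E_{R}^{\ast })$ lying over $B(f(c),F)$ are two sections of $f$ over $B(f(c),F)$ that agree at $c$; to identify them you must compare a point of $B(c,R)$ with a point of $B(y,R)$ having the same image, and such a pair is only known to lie in $R^{3}$, while the root-domain hypothesis controls only $R^{2}\subset P$ (and $B(f(c),F)$ need not be connected, so no open-closed argument rescues this). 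The sheet description is thus one power of $R$ short of what you need, and some further input --- for instance shrinking the root domain in the spirit of Inclusion (\ref{nest}) so that the square of the smaller lift is forced into the larger root domain --- is required. The remaining parts (1(b) via invariance of $S$ with $S^{2}\subset R$, 2(a), discreteness from $E_{R}^{\ast }\subset R\subset P$, and 2(c) reduced to nested root domains) do follow the paper's line.
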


\begin{proof}
For existence in the first part we may use the \textquotedblleft fine
uniformity\textquotedblright\ on $Y$; since $Y$ is metrizable this simply
consists of all symmetric neighborhoods of the diagonal in $Y\times Y$ (it
is easy to check that this is a compatible uniformity). Root domains are
open symmetric subsets containing the diagonal, hence are entourages. For
Part 1a, just intersect each set in the original countable basis with a
single root domain $R$.

Now suppose that $R\in \mathcal{R}$. Since every $f(R)$ is open in the
quotient topology, to prove the basis statement we need only prove the
\textquotedblleft triangle inequality\textquotedblright . Suppose $S$ is a
root domain with $S^{2}\subset R$. Suppose that $(x,y)\in f(S)^{2}$; so
there exist $(x,z),(y,z)\in f(S)$. This in turn means there are $(x^{\prime
},z^{\prime }),(y^{\prime \prime },z^{\prime \prime })\in S$ such that $%
f(x^{\prime })=x$, $f(z^{\prime })=f(z^{\prime \prime })=z$ and $f(y^{\prime
\prime })=y$. The middle equation means that there is some $g\in G$ such
that $g(z^{\prime \prime })=z^{\prime }$, and since $S$ is invariant,
letting $y^{\prime }:=g(y^{\prime \prime })$ we have $(z^{\prime },y^{\prime
})\in S$. But then $(x^{\prime },y^{\prime })\in S^{2}\subset R$. Therefore $%
(x,y)\in f(R)$, showing that $f(S)^{2}\subset f(R)$.

For Part 2, existence follows from Part 1. For Part 2a we simply intersect
each set in a countable basis for $X$ with $f(R)$ for some root domain $R$.

Now every $E_{R}^{\ast }$ is a symmetric open set containing the diagonal in 
$Y\times Y$, and therefore we need only show the following to prove that the
set of all $E_{R}^{\ast }$ is a basis for a uniform structure: If $F$ is an
evenly covered entourage with $F^{2}\subset E$ then $(F_{R}^{\ast
})^{2}\subset E_{R}^{\ast }$. Let $(a,b)\in (F_{R}^{\ast })^{2}$; so there
are $(a,c),(b,c)\in F_{R}^{\ast }$. This means that $(f(a),f(b)),(f(b),f(c))%
\in f(F_{R}^{\ast })=F$ (see Proposition \ref{domain}.2). In other words, $%
(f(a),f(b))\in F^{2}\subset E$. That is, $(a,b)\in f^{-1}(E)$, and since $%
(a,b)\in F_{R}^{\ast }=f^{-1}(F)\cap R$, $(a,b)\in R$, and by definition $%
(a,b)\in E_{R}^{\ast }$.

Now let $U$ be open in $Y$ and $y\in Y$. Since $f(U)$ is open, there is some
evenly covered entourage $E$ such that $B(f(y),E)\subset f(U)$. By
Proposition \ref{evenly} the restriction $h$ of $f$ to $B(y,E_{R}^{\ast })$
is a bijection onto $B(f(y),E)$. Note that $E_{R}^{\ast }=f^{-1}(E)\cap R$
is open and therefore $V:=B(y,E_{R}^{\ast })\cap U$ is open. Now $%
W:=f(U)\cap f(V)$ is open, contains $f(y)$ and is contained in $B(f(y),E)$.
But then there is an entourage $F\subset E$ in $X$ such that $%
B(f(y),F)\subset W$. Since $h$ is a homeomorphism from $B(y,E_{R}^{\ast })$
onto $B(f(y),E)$, $B(y,F_{R}^{\ast })=h^{-1}(B(f(y),F))$ is contained in $U$.

We already know that the sets $E_{R}^{\ast }$ are invariant, so to finish
the proof of Part 2b we need only show that $G$ acts discretely, which will
follow if we prove that for any evenly covered $E$, $N_{E_{R}^{\ast
}}(G)=\{1\}$. But this is immediate from Proposition \ref{evenly}: If for
some $y\in Y$ and $g\in G$, $(y,g(y))\in E_{R}^{\ast }$ then $g(y)\in
B(y,E_{R}^{\ast })$. But $f(y)=f(g(y))$ and the restriction of $f$ to $%
B(y,E_{R}^{\ast })$ is 1-1, so $y=g(y)$ and $g=1$.

For Part 2c, by taking intersections we may assume that the two root domains
satisfy $R^{\prime }\subset R$. Again by taking intersections, we can take
for our basis elements $E_{R}^{\ast }$ and $E_{R^{\prime }}^{\ast }$ of the
lifted uniformities of $R$ and $R^{\prime }$, respectively, using entourages 
$E$ that are evenly covered by both $R$ and $R^{\prime }$. That is, $%
E\subset f(R^{\prime })$. Suppose that $(a,b)\in E_{R}^{\ast }=f^{-1}(E)\cap
R\subset f^{-1}(f(R^{\prime }))\cap R$, so $(f(a),f(b))\in f(R^{\prime })$
and therefore there exist $(a^{\prime },b^{\prime })\in R^{\prime }$ such
that $f(a^{\prime })=f(a)$ and $f(b^{\prime })=f(b)$. Since $R^{\prime }$ is
invariant, as before we may find $b^{\prime \prime }$ such that $f(b^{\prime
\prime })=f(b)$ and $(a,b^{\prime \prime })\in R^{\prime }$. Since $(a,b)\in
R$, $(b,b^{\prime \prime })\in RR^{\prime }\subset R^{2}\subset P$, where $P$
is the PD-Domain. By Proposition \ref{domain}, $b=b^{\prime \prime }$ and
therefore $(a,b)\in R^{\prime }$. That is, $(a,b)\in E_{R^{\prime }}^{\ast }$%
, showing that $E_{R}^{\ast }\subset E_{R^{\prime }}^{\ast }$. Since $%
R^{\prime }\subset R$, the reverse inclusion is also true, completing the
proof of the theorem.
\end{proof}

\begin{remark}
We summarize how to uniformize a regular covering map $f:Y\rightarrow X=Y/G$%
. First give $Y$ a uniform structure such that some root domain is an
entourage. Take the quotient uniformity on $X=Y/G$, and then lift the
quotient uniformity back to $Y$, from which one obtains a (possibly not
strictly) finer uniformity than the original, which has an invariant basis
and with respect to which $G$ acts discretely. The uniformities on $X$ and $%
Y $ are uniquely determined by the original uniformity on $Y$. When $Y$ is
compact then of course there is only one compatible uniformity on each of $X$
and $Y$, but Theorem \ref{uniformize} still provides the important
information that one can find an invariant basis with respect to which $G$
acts discretely.
\end{remark}

\begin{notation}
In general, root domains, which include all symmetric open sets containing
the diagonal that are contained in a given root domain, need not be
entourages. When $Y$ is a uniform space and $G$ acts discretely and
isomorphically on $Y$, a root domain in $Y$ that is also an entourage will
be called a root entourage.
\end{notation}

The next lemma, which we will use without reference, brings us full circle
concerning uniformizing properly discontinuous actions.

\begin{lemma}
\label{full}Suppose $G$ acts discretely and isomorphically on a uniform
space $Y$. Then

\begin{enumerate}
\item $G$ acts properly discontinuously on the topological space $Y$.

\item $Y$ has a basis of root entourages.

\item The uniform structure on $Y$ is the lift of the quotient uniformity on 
$X=Y/G$ with respect to any root entourage $R$ in $Y$.
\end{enumerate}
\end{lemma}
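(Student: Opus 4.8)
The plan is to treat the three parts in order, using discreteness to produce a single invariant entourage $E$ with $S_E(G)=\{1\}$ as the common engine. Since $G$ acts discretely there is an invariant entourage $E$ with $N_E(G)=\{1\}$, and as $S_E(G)\subset N_E(G)$ this means no nontrivial $g\in G$ satisfies $(x,g(x))\in E$ for any $x$. The observation I will reuse throughout is that this forces $E\subset P$, where $P$ is the PD-domain: if $(x,y)\in E$ and $y=g(x)$, then $(x,g(x))\in E$ gives $g=1$, so $x=y$.

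For Part 1, I would verify the paper's two conditions for proper discontinuity directly. For condition (1), I choose an open symmetric entourage $F$ with $F^2\subset E$ (open symmetric entourages form a base, a standard fact about uniform spaces), and for $y\in Y$ set $U=B(y,F)$, which is open because it is the preimage of the open set $F$ under the continuous map $z\mapsto (y,z)$. If $z\in g(U)\cap U$ with $z=g(w)$ and $w\in U$, then $(w,y),(y,z)\in F$ give $(w,g(w))=(w,z)\in F^2\subset E$, so $g\in S_E(G)=\{1\}$. Hausdorffness of $X=Y/G$ (condition (2)) follows from the stated fact that discrete actions are properly discontinuous (Proposition 22 of \cite{PQ}); alternatively, since discrete actions have uniformly discrete orbits, one checks that orbits are closed and hence the quotient uniformity separates points.

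For Part 2, the inclusion $E\subset P$ does almost all the work. Given any entourage $D$, I pick an open symmetric invariant entourage $R$ with $R^2\subset E\cap D$; such $R$ exist because the action is isomorphic (invariant entourages form a base) and the interior of a symmetric invariant entourage is again a symmetric invariant entourage that is an entourage. Then $R\subset R^2\subset D$ and $R^2\subset E\subset P$, so $R$ is a root domain, and being an entourage, a root entourage contained in $D$; hence root entourages form a basis. For Part 3, I fix a root entourage $R$ and let $f:Y\rightarrow X=Y/G$ be the (bi-uniformly continuous) quotient map, so the lifted uniformity has basic entourages $A_R^{\ast}=f^{-1}(A)\cap R$ for evenly covered $A$ in $X$; I show the original and lifted uniformities refine each other. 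Each $A_R^{\ast}=f^{-1}(A)\cap R$ is an original entourage since $f^{-1}(A)$ is (uniform continuity) and $R$ is, which gives one inclusion. For the reverse, let $D_0$ be any original entourage, choose an invariant entourage $D$ with $D\subset D_0\cap R$, and set $A:=f(D)$, which is an entourage of $X$ that is evenly covered because $D\subset R$ forces $A\subset f(R)$. I claim $A_R^{\ast}=f^{-1}(f(D))\cap R\subset D$: if $(x,y)\in f^{-1}(f(D))\cap R$ then $(x,y)\in R$ and $x=g_1(x')$, $y=g_2(y')$ for some $(x',y')\in D$; putting $y'':=g_1(y')$ gives $(x,y'')\in D\subset R$ with $f(y'')=f(y)$, so $(y,y'')\in R^2\subset P$ and Proposition \ref{domain} forces $y=y''$, whence $(x,y)=(x,y'')\in D\subset D_0$. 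Thus every original entourage contains a lifted one, and the two uniformities coincide.

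The main obstacle I anticipate is Part 1's condition (2): Hausdorffness of $X=Y/G$ is the only place where one must argue about the global structure of orbits rather than make a purely local entourage-level computation, and it is cleanest to lean on the cited Proposition 22 of \cite{PQ} (or to first establish that uniformly discrete orbits are closed). The remaining delicacy is bookkeeping: ensuring at each step that ``open, symmetric, and invariant'' can be arranged simultaneously when passing to a basis, which rests on the standard fact that the interior of an entourage is an entourage together with isomorphy of the action.
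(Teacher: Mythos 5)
Your proposal is correct and follows essentially the same route as the paper: the key inclusion $E\subset P$ for an invariant entourage with $N_{E}(G)=\{1\}$, root entourages obtained from invariant entourages $R$ with $R^{2}\subset E$, and the PD-domain computation showing $f^{-1}(f(D))\cap R\subset D$ for an invariant entourage $D\subset R$. The only differences are cosmetic — you verify condition (1) of proper discontinuity directly instead of citing Proposition 22 of \cite{PQ}, and you spell out the openness/invariance bookkeeping that the paper leaves implicit.
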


\begin{proof}
For the first part we need only observe that the orbits via a discrete
action are uniformly discrete, hence closed, so the quotient is Hausdorff.
For the second part, let $E$ be any invariant entourage in $Y$ such that $%
N_{E}(G)=\{1\}$ and suppose that $(x,g(x))\in E$ for some $x\in X$ and $g\in
G$. Then by definition, $g\in N_{E}=\{1\}$, so $E$ is contained in the
PD-domain. But then any invariant entourage $R$ such that $R^{2}\subset E$
is a root domain. The second part follows.

For the third part let $f:Y\rightarrow X=Y/G$ be the quotient map, suppose
that $R$ is a root entourage in $Y$ and $F\subset R$ is an invariant
entourage in $Y$. Since $f(F)^{\ast }=f^{-1}(f(F))\cap R$ is an intersection
of entourages, $f(F)^{\ast }$ is an entourage. We will show $f(F)^{\ast
}\subset F$ (in fact they are equal), completing the proof . If $(x,y)\in
f(F)^{\ast }$ then $(x,y)\in R$ and there is some $(x^{\prime },y^{\prime
})\in F$ such that $f(x^{\prime })=f(x)$ and $f(y^{\prime })=f(y)$. Now
there exists some $g\in G$ such that $g(x^{\prime })=x$, and since $F$ is
invariant, $(x,g(y^{\prime }))\in F\subset R$. Therefore $(y,g(y^{\prime
}))\in R^{2}$, which is contained in the PD-domain. Since $f(y)=f(y^{\prime
})=f(g(y^{\prime }))$, Proposition \ref{domain} implies $y=g(y^{\prime })$.
That is, $(x,y)\in F$.
\end{proof}

\section{Discrete Covers}

\begin{lemma}[General Chain Lifting Lemma]
\label{cl}Suppose $f:Y\rightarrow X=Y/G$ is a discrete cover, $R$ is a root
entourage in $Y$ and $E$ is an evenly covered entourage with respect to $R$.
If $\alpha $ is an $E$-chain in $X$ starting at $x_{0}$ and $\widetilde{x_{0}%
}\in f^{-1}(x_{0})$ then there is a unique $E_{R}^{\ast }$-chain $\widetilde{%
\alpha }$ starting at $\widetilde{x_{0}}$, called the lift of $\alpha $,
such that $f(\widetilde{\alpha })=\alpha $. Moreover, $[\alpha ]_{E}=[\beta
]_{E}$ if and only if $[\widetilde{\alpha }]_{E_{R}^{\ast }}=[\widetilde{%
\beta }]_{E_{R}^{\ast }}$, and in particular if $[\alpha ]_{E}=[\beta ]_{E}$
then $\widetilde{\alpha }$ and $\widetilde{\beta }$ end at the same point in 
$Y$.
\end{lemma}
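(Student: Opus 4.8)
The plan is to prove the lemma in two stages: first construct the lift and verify its uniqueness, then establish the homotopy biconditional, from which the endpoint assertion is immediate. For existence and uniqueness I would induct on the length of $\alpha=\{x_0,\dots,x_n\}$. Suppose $\widetilde{x_0},\dots,\widetilde{x_i}$ have been defined with $f(\widetilde{x_j})=x_j$ and $(\widetilde{x_{j-1}},\widetilde{x_j})\in E_R^*$. Since $(x_i,x_{i+1})\in E$ we have $x_{i+1}\in B(x_i,E)$, and by Proposition \ref{evenly} the restriction of $f$ to $B(\widetilde{x_i},E_R^*)$ is a bijection onto $B(x_i,E)$; I would let $\widetilde{x_{i+1}}$ be the unique preimage of $x_{i+1}$ in that ball. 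Then $(\widetilde{x_i},\widetilde{x_{i+1}})\in E_R^*$ and $f(\widetilde{x_{i+1}})=x_{i+1}$, so $\widetilde\alpha$ is an $E_R^*$-chain lifting $\alpha$. Uniqueness falls out of the same induction: any $E_R^*$-chain lifting $\alpha$ from $\widetilde{x_0}$ has its $(i+1)$-st point in $B(\widetilde{x_i},E_R^*)$ lying over $x_{i+1}$, and bijectivity forces it to equal $\widetilde{x_{i+1}}$.

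For the biconditional, the direction $[\widetilde\alpha]_{E_R^*}=[\widetilde\beta]_{E_R^*}\Rightarrow[\alpha]_E=[\beta]_E$ is the easy one: since $f(E_R^*)=E$ by Proposition \ref{domain}, the general fact that a map carrying $E_R^*$ into $E$ sends $E_R^*$-homotopic chains to $E$-homotopic chains gives $[\alpha]_E=[f(\widetilde\alpha)]_E=[f(\widetilde\beta)]_E=[\beta]_E$. For the converse I would reduce to the case in which $\beta$ arises from $\alpha$ by a single basic move and, by symmetry, treat the removal of an interior point: $\alpha=\{\dots,a,b,c,\dots\}$ and $\beta=\{\dots,a,c,\dots\}$ with $(a,c)\in E$. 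The two lifts agree on the common prefix ending at some $\widetilde a$ by uniqueness, so it suffices to show that the point $\widetilde c$ reached from $\widetilde b$ along $\widetilde\alpha$ satisfies $(\widetilde a,\widetilde c)\in E_R^*$. Granting this, deleting $\widetilde b$ from $\widetilde\alpha$ is an $E_R^*$-basic move producing exactly $\widetilde\beta$ (again by uniqueness of the lift on the remaining tail), so $[\widetilde\alpha]_{E_R^*}=[\widetilde\beta]_{E_R^*}$.

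I expect the claim $(\widetilde a,\widetilde c)\in E_R^*$ to be the main obstacle. One always has $f(\widetilde a,\widetilde c)=(a,c)\in E$, hence $(\widetilde a,\widetilde c)\in f^{-1}(E)$, and from the two steps $(\widetilde a,\widetilde b),(\widetilde b,\widetilde c)\in E_R^*$ one gets $(\widetilde a,\widetilde c)\in (E_R^*)^2\subset R^2\subset P$; the real work is to upgrade this to membership in $R$, equivalently to show that $\widetilde c$ coincides with the unique preimage of $c$ in the ball $B(\widetilde a,E_R^*)$. This is exactly where the root-entourage hypothesis is essential: I would use the even covering of $E$-balls by $E_R^*$-balls (Proposition \ref{evenly}) to compare the candidate preimage of $c$ sitting in $B(\widetilde a,E_R^*)$ with the one reached through $B(\widetilde b,E_R^*)$, and invoke Proposition \ref{domain} together with the defining inclusion $R^2\subset P$ to rule out their being distinct points of a common fiber, forcing them to agree. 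Once homotopy is preserved in both directions, the final assertion is automatic: an $E_R^*$-homotopy fixes the endpoints of a chain, so $[\alpha]_E=[\beta]_E$ yields $[\widetilde\alpha]_{E_R^*}=[\widetilde\beta]_{E_R^*}$ and hence $\widetilde\alpha$ and $\widetilde\beta$ terminate at the same point of $Y$.
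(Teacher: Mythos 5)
Your architecture is exactly that of the paper: existence and uniqueness of the lift by iterating Proposition \ref{evenly}, the easy direction of the biconditional from $f(E_{R}^{\ast})=E$, and the hard direction by reducing to a single basic move and showing that the two candidate preimages of the surviving point coincide. You have also correctly isolated the crux --- upgrading $(\widetilde{a},\widetilde{c})\in (E_{R}^{\ast})^{2}\cap f^{-1}(E)$ to $(\widetilde{a},\widetilde{c})\in E_{R}^{\ast}$ --- which is precisely where the paper's own proof does its work (the paper treats the inverse move, adding a point, but the content is the same).

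That crux, however, is exactly where your proposal stops being a proof and becomes a plan, and the plan as stated does not close. You propose to compare the preimage $\widetilde{c}'$ of $c$ lying in $B(\widetilde{a},E_{R}^{\ast})$ with the point $\widetilde{c}$ reached through $\widetilde{b}$, and to conclude $\widetilde{c}'=\widetilde{c}$ from Proposition \ref{domain} together with $R^{2}\subset P$. But the chain you actually possess is $\widetilde{c}'\rightarrow\widetilde{a}\rightarrow\widetilde{b}\rightarrow\widetilde{c}$, three links of $E_{R}^{\ast}$, so all you get is $(\widetilde{c}',\widetilde{c})\in (E_{R}^{\ast})^{3}\subset R^{3}$, while the definition of a root domain only guarantees $R^{2}\subset P$. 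The missing power is not bookkeeping: take $f:\mathbb{R}\rightarrow S^{1}=\mathbb{R}/\mathbb{Z}$, $R=E_{1/2}$ (a legitimate root domain, as in the paper's own example) and $E=f(R)$, which is evenly covered with respect to $R$. The $E$-chains $\{0,0.4,0.8\}$ and $\{0,0.8\}$ in $S^{1}$ differ by a single basic move, yet their lifts at $0$ end at $0.8$ and $-0.2$ respectively. So the step you flag as the main obstacle genuinely fails under the stated hypotheses; closing it requires strengthening them, e.g.\ demanding $R^{3}\subset P$ or that $E^{2}$ (not merely $E$) be contained in $f(R)$, after which your two candidates land in the PD-domain and Proposition \ref{domain} finishes as you intend. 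For what it is worth, the paper's proof asserts at the corresponding moment that the analogous pair lies in $(E_{R}^{\ast})^{2}$, although the chain it exhibits also has three links --- so you have faithfully reproduced the paper's argument, gap included, but you have not supplied the idea that would actually repair it.
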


\begin{proof}
The existence and uniqueness of $\widetilde{\alpha }$ is immediate from
iteration using the fact that the fact that $E$-balls are evenly covered by $%
E_{R}^{\ast }$-balls (Proposition \ref{evenly}). Now suppose that $\alpha
=\{x_{0},...,x_{n}\}$ has unique lift $\widetilde{\alpha }=\{\widetilde{x_{0}%
},...,\widetilde{x_{n}}\}$ and $\beta
=\{x_{0},...,x_{i},x,x_{i+1},...,x_{n}\}$; that is, $\beta $ differs from $%
\alpha $ by the basic move of adding a point $x$. Note that $x,x_{i+1}\in
B(x_{i},E)$. Therefore there are unique points $\widetilde{x},\widetilde{z}%
\in B(\widetilde{x_{i}},E_{R}^{\ast })$ such that $f(\widetilde{z})=x_{i+1}$
and $f(\widetilde{x})=x$. By uniqueness, $\widetilde{z}=\widetilde{x_{i+1}}$%
. On the other hand, the restriction of $f$ to $B(\widetilde{x},E_{R}^{\ast
})$ is a bijection onto $B(x,E)$, which contains $x_{i}$ and $x_{i+1}$.
Therefore there is some $\widetilde{w}\in B(\widetilde{x},E_{R}^{\ast })$
such that $f(\widetilde{w})=x_{i+1}$. But then $\left( \widetilde{x_{i+1}},%
\widetilde{w}\right) \in (E_{R}^{\ast })^{2}=(f^{-1}(E)\cap R)^{2}\subset
R^{2}\subset P$, where $P$ is the PD-domain. Since $f(\widetilde{x_{i+1}}%
)=x_{i+1}=f(\widetilde{w})$, Proposition \ref{domain} implies that $%
\widetilde{x_{i+1}}=\widetilde{w}$. Since $(\widetilde{w},\widetilde{x})\in
E_{R}^{\ast }$, $\left( \widetilde{x_{i+1}},\widetilde{x}\right) \in
E_{R}^{\ast }$. That is, adding $\widetilde{x}$ to $\widetilde{\alpha }$ is
a basic $E_{R}^{\ast }$-move. Removing a point is simply the inverse
operation, completing the proof that if $[\alpha ]_{E}=[\beta ]_{E}$ then $[%
\widetilde{\alpha }]_{E_{R}^{\ast }}=[\widetilde{\beta }]_{E_{R}^{\ast }}$.
The converse follows from the fact that $f(E_{R}^{\ast })=E$.
\end{proof}

\begin{remark}
The above lemma is actually the third version of a Chain Lifting Lemma (see 
\cite{PW} and \cite{PS}); we will discuss this in more detail just prior to
Theorem \ref{quoteq}.
\end{remark}

\begin{proposition}
\label{cpt}If $f:Y\rightarrow X=Y/G$ is a discrete cover between compact
metric spaces then $G$ is finite. Suppose in addition that $X$ is connected.
Then $Y$ has finitely many components $C$, each of which is uniformly open,
and the restriction $f_{C}$ to any of which surjective. In particular $%
f_{C}:C\rightarrow X=C/S_{C}$ is a discrete cover, where $S_{C}$ is the
stabilizer subgroup of $C$.
\end{proposition}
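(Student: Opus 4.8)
The plan is to proceed in four stages: prove that $G$ is finite; count the ``chain components at a fixed scale $E$'' of $Y$; run a stabilization argument that identifies these with the genuine topological components once the scale is small enough; and finally invoke Lemma~\ref{cc} to get the covering structure on each component. I will fix a root entourage $R$ in $Y$ (which exists by Lemma~\ref{full}) and work throughout with the basis of evenly covered entourages $E$ of $X$ and their lifts $E_R^{\ast}$, which form a basis for the uniformity of $Y$ by Theorem~\ref{uniformize} and Lemma~\ref{full}.

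First I would show $G$ is finite. Since the action is discrete it is free and each orbit is uniformly discrete, so there is an entourage $E$ such that every $E$-ball contains at most one point of the orbit $G\widetilde{x}_0$ of a chosen point $\widetilde{x}_0$. Because $Y$ is compact it is totally bounded, so finitely many $E$-balls cover $Y$; each meets $G\widetilde{x}_0$ in at most one point, whence $|G|=|G\widetilde{x}_0|$ is finite. Next, for each evenly covered $E$ the $E_R^{\ast}$-chain components partition $Y$ into sets that are uniformly $E_R^{\ast}$-open, hence clopen, so by compactness there are only finitely many of them. I claim each such chain component $D$ surjects onto $X$: as $X$ is a continuum it is $E$-chain connected, so any $x\in X$ is joined to $f(\widetilde{x}_0)$ (for $\widetilde{x}_0\in D$) by an $E$-chain whose lift (Lemma~\ref{cl}) is an $E_R^{\ast}$-chain in $D$ ending over $x$, giving $x\in f(D)$. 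Fixing a basepoint $x_0\in X$, every $E_R^{\ast}$-chain component therefore meets the fiber $f^{-1}(x_0)$, which being an orbit of the free action has exactly $|G|$ points; since the chain components are disjoint, there are at most $|G|$ of them.

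The crux is passing from chain components at a fixed scale to genuine topological components; this is exactly where the lack of local connectivity of $X$ bites, since a priori a topological component of $Y$ may be strictly smaller than any single $E_R^{\ast}$-chain component. The key observation is monotonicity: if $E'\subset E$ then $(E')_R^{\ast}\subset E_R^{\ast}$, so the $(E')_R^{\ast}$-partition refines the $E_R^{\ast}$-partition, and the number of pieces is nondecreasing as the scale shrinks while remaining bounded by $|G|$. Hence this count attains its maximum at some evenly covered $E_0$, and for every evenly covered $E\subset E_0$ the partition coincides with that of $E_0$ (a refinement with the same finite number of pieces changes nothing). Since the lifts $E_R^{\ast}$ form a basis for the uniformity of $Y$, the chain component of a point, which equals the intersection $\bigcap U^{E_R^{\ast}}$ over this basis, is by stabilization exactly the single $(E_0)_R^{\ast}$-chain component containing it; and in the compact setting chain components coincide with topological components. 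Thus the components of $Y$ are precisely these finitely many $(E_0)_R^{\ast}$-chain components, so they are finite in number, uniformly open, and each surjects onto $X$ by the preceding paragraph.

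Finally, each component $C$ is an $(E_0)_R^{\ast}$-chain component for the invariant entourage $(E_0)_R^{\ast}$, so it has Property ST by Lemma~\ref{cc}(1); it is uniformly open and $f_C$ is onto, so Lemma~\ref{cc}(2) shows $f_C$ is bi-uniformly continuous and that $f_C:C\rightarrow X=C/S_C$ is a quotient map. Because $S_C$ is a subgroup of $G$ it acts discretely and isomorphically on $C$, so $f_C$ is a discrete cover, completing the argument. The only genuinely delicate point is the stabilization step in the third paragraph; the remaining pieces are routine assemblies of Lemmas~\ref{cl}, \ref{cc}, \ref{full} and Theorem~\ref{uniformize}.
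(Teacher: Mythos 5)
Your proof is correct, and its skeleton matches the paper's: finiteness of $G$ from compactness plus uniform discreteness of orbits, surjectivity via the General Chain Lifting Lemma, a bound on the number of components by the cardinality of a fiber, and Lemma~\ref{cc} to finish. The genuine divergence is in the middle, precisely where you flag ``the crux.'' The paper never introduces your monotone-stabilization argument on the counts of $E_R^{\ast}$-chain components. Instead it works directly with a topological component $C$: for $y\in C$ and any $x\in X$, it lifts arbitrarily fine chains from $f(y)$ to $x$ and applies a pigeonhole to the finite fiber $f^{-1}(x)$ --- some single $z\in f^{-1}(x)$ receives arbitrarily fine chains from $y$, hence lies in the chain component of $y$, which in the compact setting is $C$. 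That gives surjectivity of $f_C$ and finiteness of the set of components in one stroke, and uniform openness then comes essentially for free: finitely many compact components are each open, and $\bigcup_i C_i\times C_i$ is an entourage for which each is uniformly open. Your stabilization route is also valid (the count is nondecreasing under refinement, bounded by $|G|$, hence stabilizes at some $E_0$, and the stabilized $(E_0)_R^{\ast}$-chain components are the quasi-components, hence the components of the compact space $Y$), and it buys the slightly sharper statement that the components are already the chain components at the single scale $E_0$; but the paper's pigeonhole on a finite fiber is shorter and avoids the stabilization machinery altogether. One small point worth making explicit in your write-up: the identification of chain components with topological components in a compact metric space (components equal quasi-components, and clopen sets are uniformly open by compactness) is used but not proved by either argument, and deserves a sentence.
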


\begin{proof}
If $\{g_{i}\}$ were an infinite collection of distinct elements of $G$ then
taking some $y\in Y$, without loss of generality we could suppose that $%
y_{i}=g_{i}(y)$ is a convergent sequence consisting of distinct points,
since $G$ acts freely. But this is impossible because the action of $G$ is
discrete.

For the next statement we first prove that the restriction of $f$ to $C$ is
surjective. Let $R$ be a root entourage of $Y$ and $E$ be evenly covered
with respect to $R$. Since $X$ is connected, if $w=f(y)$ for some $y\in C$
then for any $x\in X$ there are arbitrarily fine chains in $X$ from $w$ to $%
x $. By the General Chain Lifting Lemma these chains (more precisely those
that are finer than $E$-chains) must lift to arbitrarily fine chains
starting at $y$ ending at some point in $f^{-1}(x)$. But the latter set is
finite, so there must be a point $z\in f^{-1}(x)$ such that there are
arbitrarily fine chains from $y$ to $z$. But this means that $z\in C$, and
since $f(z)=x$ this shows that the restriction of $f$ to any component is
surjective. Therefore each component contains some point in $f^{-1}(x)$ and
since the latter set is finite, there are only finitely many components.

Since there are only finitely many (compact) components, each component $%
C_{i}$ is open. Define $U\subset Y\times Y$ to the be the union of the sets $%
C_{i}\times C_{i}$; then $U$ is an entourage in $Y$ ($Y$ is compact) and as
is easily checked each component is uniformly $U$-open.

The final statement is an immediate consequence of Lemma \ref{cc}.
\end{proof}

\begin{proposition}
\label{liftprop}Let $f:Y\rightarrow X=Y/G$ be a discrete cover and suppose $%
E $ is evenly covered with respect to a root entourage $R$ in $Y$. Then for
any $y\in Y$, $E_{R}^{\ast }$-chain $\alpha $ from $\ast $ to $y$ in $Y$,
and $g\in G$, $g(y)$ is the endpoint of the unique lift of the $E$-chain $%
f(\alpha )$ to $Y$ at $g(\ast )$.
\end{proposition}

\begin{proof}
Since $E_{R}^{\ast }$ is invariant, $g(\alpha )$ is an $E_{R}^{\ast }$-chain
from $g(\ast )$ to $g(y)$. On the other hand, $f(g(\alpha ))=f(\alpha )$ and
therefore $g(\alpha )$ is the unique lift of $f(\alpha )$ to $Y$ at $g(\ast
) $.
\end{proof}

\begin{definition}
\label{pdc}If $f:Y\rightarrow X=Y/G$ is a discrete cover, an entourage $E$
in $X$ is called properly covered if for some root entourage $R$ in $Y$, if $%
E\subset f(R)$ (i.e. $E$ is evenly covered with respect to $R$) and $Y$ is $%
E_{R}^{\ast }$-chain connected. In this case we will say $E$ is properly
covered with respect to $R$. If $X$ has a properly covered entourage $E$
then $f$ is called a proper discrete cover.
\end{definition}

Note that if $Y$ is chain connected then every evenly covered entourage is
properly covered.

\begin{theorem}
\label{eqone}Let $f_{i}:Y_{i}\rightarrow X=Y_{i}/G_{i}$ be discrete covers
of uniform spaces. Suppose $f_{1}\leq f_{2}$, i.e. there is some uniformly
continuous surjection $f:Y_{2}\rightarrow Y_{1}$ such that $f_{2}=f_{1}\circ
f$. Define $\theta _{f}:G_{2}\rightarrow G_{1}$ by letting $\theta _{f}(g)$
be the unique element of $G_{1}$ such that $\theta _{f}(g)(\ast )=f(g(\ast
)) $. Then:

\begin{enumerate}
\item Suppose that $R_{2}$ is a root entourage in $Y_{2}$ and let $%
R_{1}:=f(R_{2})$. Then

\begin{enumerate}
\item $R_{1}$ is a root entourage in $Y_{1}$.

\item If $E$ is evenly covered with respect to $R_{2}$ then $E$ is evenly
covered with respect to $R_{1}$, and $E_{R_{1}}^{\ast }=f(E_{R_{2}}^{\ast })$%
.

\item If $\alpha $ is any $E_{R_{2}}^{\ast }$-chain in $Y_{2}$ from $\ast $
to $y$ and $g\in G_{2}$ then $\theta _{f}(g)(f(y))=f(g(y))$, which is the
endpoint of the unique lift of the $E$-chain $f_{2}(\alpha )$ to $Y_{1}$ at $%
f(g(\ast ))$.

\item If $E$ is properly covered with respect to $R_{2}$ then $E$ is
properly covered with respect to $R_{1}$.
\end{enumerate}

\item $\theta _{f}$ is a surjective homomorphism, and if $E$ is properly
covered with respect to $R_{2}$ we have the compatibility condition $f\circ
g=\theta _{f}(g)\circ f$.

\item $f:Y_{2}\rightarrow Y_{1}=Y_{2}/K_{f}$ is a discrete cover, where $%
K_{f}$ is a normal subgroup of $G_{2}$.

\item $f_{1}$ is equivalent to the induced quotient $\pi
:Y_{2}/K_{f}\rightarrow (Y_{2}/K_{f})/(G_{2}/K_{f})$.
\end{enumerate}
\end{theorem}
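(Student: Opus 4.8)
The plan is to make every part flow from a single \emph{compatibility relation} $f\circ g=\theta _{f}(g)\circ f$, whose pointwise form is Part~1c and which is obtained by chain lifting. First I would check that $\theta _{f}$ is well defined and surjective, which needs no chains. Since $f_{1}\circ f=f_{2}$, the map $f$ carries each $f_{2}$-fiber into an $f_{1}$-fiber, so $f(g(\ast ))$ lies in $f_{1}^{-1}(f_{2}(\ast ))=G_{1}\ast $; because a discrete action is free, there is a unique $\theta _{f}(g)\in G_{1}$ with $\theta _{f}(g)(\ast )=f(g(\ast ))$, so $\theta _{f}$ is well defined. Surjectivity is immediate from surjectivity of $f$: since $f_{2}^{-1}(f_{2}(\ast ))=f^{-1}(G_{1}\ast )$, we get $\theta _{f}(G_{2})\ast =f(G_{2}\ast )=f(f^{-1}(G_{1}\ast ))=G_{1}\ast $, and freeness of $G_{1}$ forces $\theta _{f}(G_{2})=G_{1}$.

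For Part~1 I fix a root entourage $R_{2}$, set $E:=f_{2}(R_{2})$ (so $E$ is evenly covered with respect to $R_{2}$), and put $R_{1}:=f(R_{2})$; symmetry of $R_{1}$ and containment of the diagonal are immediate from surjectivity of $f$. The structural point is that $f$ is locally a uniform homeomorphism: comparing the even-covering bijections of Proposition~\ref{evenly} for $f_{1}$ and $f_{2}$ (using a pre-existing root entourage of $Y_{1}$ from Lemma~\ref{full}) with $f_{2}=f_{1}\circ f$ shows that $f$ carries each $E_{R_{2}}^{\ast }$-ball bijectively onto an $E$-ball's $f_{1}$-preimage ball, hence $f$ is bi-uniformly continuous and $R_{1}=f(R_{2})$ is an entourage. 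Even-covering with respect to $R_{1}$ follows from $E\subset f_{2}(R_{2})=f_{1}(R_{1})$, and the identity $E_{R_{1}}^{\ast }=f_{1}^{-1}(E)\cap R_{1}=f(f_{2}^{-1}(E)\cap R_{2})=f(E_{R_{2}}^{\ast })$ (Proposition~\ref{domain}) gives~1b. For the local compatibility~1c, given an $E_{R_{2}}^{\ast }$-chain $\alpha $ from $\ast $ to $y$, invariance of $E_{R_{2}}^{\ast }$ makes $g(\alpha )$ an $E_{R_{2}}^{\ast }$-chain from $g(\ast )$ to $g(y)$, so $f(g(\alpha ))$ is an $E_{R_{1}}^{\ast }$-chain lying over $f_{2}(\alpha )$ and starting at $f(g(\ast ))=\theta _{f}(g)(\ast )$; by the uniqueness clause of the General Chain Lifting Lemma (Lemma~\ref{cl}) for $f_{1}$ it is the lift of $f_{2}(\alpha )$ based there, with endpoint $f(g(y))$, while Proposition~\ref{liftprop} identifies that same endpoint as $\theta _{f}(g)(f(y))$, giving~1c. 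Feeding~1c back into the fiber description of $f$ yields invariance of $R_{1}$ and $R_{1}^{2}\subset P$, completing~1a; and~1d follows because $f$ surjectively maps $E_{R_{2}}^{\ast }$-chains to $E_{R_{1}}^{\ast }$-chains, transporting chain connectedness from $Y_{2}$ to $Y_{1}$.

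Part~2 is where the chain hypothesis bites: the homomorphism identity $\theta _{f}(gh)=\theta _{f}(g)\theta _{f}(h)$ and the global relation $f\circ g=\theta _{f}(g)\circ f$ both amount to~1c evaluated at \emph{arbitrary} points, which requires joining $\ast $ to each point of $Y_{2}$ by an $E_{R_{2}}^{\ast }$-chain, i.e. that $E$ be properly covered with respect to $R_{2}$. \textbf{This propagation of $\theta _{f}$ from the basepoint to all of $Y_{2}$ is the main obstacle}: $\theta _{f}$ is defined only at $\ast $, and every structural statement about $R_{1}$ and about $f$ as a quotient rests on transporting it along chains via Lemma~\ref{cl}, so the bookkeeping of lifts, their endpoints, and the precise role of chain connectedness is the delicate part.

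Parts~3 and~4 are then formal. Setting $K_{f}:=\ker \theta _{f}$, a normal subgroup of $G_{2}$, the compatibility relation shows the point preimages of $f$ are exactly the $K_{f}$-orbits: if $f(y)=f(y^{\prime })$ then $y^{\prime }=g(y)$ for some $g\in G_{2}$ (same $f_{2}$-fiber), and $\theta _{f}(g)(f(y))=f(g(y))=f(y)$ forces $\theta _{f}(g)=1$ by freeness, while the converse is immediate. As a subgroup of $G_{2}$, $K_{f}$ acts discretely and isomorphically, and $f$ is bi-uniformly continuous with these orbits as point preimages, so Remark~\ref{quotient} identifies $f$ with the discrete cover $Y_{2}\rightarrow Y_{2}/K_{f}$, giving~3. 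Finally, applying Proposition~\ref{inducedp} to $f_{2}=f_{1}\circ f$ with the normal subgroup $K_{f}$ identifies $f_{1}$ with the induced quotient $\pi :Y_{2}/K_{f}\rightarrow (Y_{2}/K_{f})/(G_{2}/K_{f})$, which is~4.
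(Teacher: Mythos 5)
Your overall architecture---bi-uniform continuity of $f$ via the even-covering local bijections, the identity $E_{R_{1}}^{\ast }=f(E_{R_{2}}^{\ast })$, compatibility via uniqueness of chain lifts, and then $K_{f}=\ker \theta _{f}$ fed into Remark \ref{quotient} and Proposition \ref{inducedp}---is the paper's. But there is a genuine circularity in your treatment of Part 1. You prove 1c before 1a and then assert that ``feeding 1c back'' yields invariance of $R_{1}$ and $R_{1}^{2}\subset P$. The problem is that your proof of 1c invokes the uniqueness clause of Lemma \ref{cl} and Proposition \ref{liftprop} for $f_{1}$ \emph{with respect to} $R_{1}$, and both of those results presuppose that $R_{1}$ is a root entourage---which is exactly the content of 1a. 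Substituting a pre-existing root entourage $R$ of $Y_{1}$ from Lemma \ref{full} does not repair this, since there is no reason the given $E$ is evenly covered with respect to $R$, nor that $f(R_{2})\subset R$. Moreover, even granting 1c, the deduction of $R_{1}^{2}\subset P$ is not a formality: $f(R_{2})^{2}\neq f(R_{2}^{2})$ in general, so one must pull an alleged pair $(y,g_{1}(y))\in R_{1}^{2}$ back to $Y_{2}$, use invariance of $R_{2}$ to replace the two preimages of the middle point by a single one and so produce $(y^{\prime },w^{\prime })\in R_{2}^{2}$ lying in the PD-domain of $Y_{2}$, and then use $f_{2}(y^{\prime })=f_{2}(w^{\prime })$ and freeness of $G_{2}$ to force $y^{\prime }=w^{\prime }$ and hence $g_{1}=1$. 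That direct argument is how the paper establishes 1a, before and independently of 1c; it is absent from your proposal and cannot be replaced by a one-line appeal to 1c.

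The remainder tracks the paper, with one genuine simplification worth noting: because here $R_{1}$ is exactly $f(R_{2})$, the identity $E_{R_{1}}^{\ast }=f(E_{R_{2}}^{\ast })$ really is elementary set theory from $f_{2}=f_{1}\circ f$, whereas the paper derives it from the more delicate computation used to prove bi-uniform continuity of $f$ (where only $f(R^{\prime })\subset R$ is available). You still need that harder computation, with a pre-existing root entourage of $Y_{1}$, just to know $f(R_{2})$ is an entourage at all, so nothing is saved overall. Two smaller points: you fix $E:=f_{2}(R_{2})$, but 1b--1d are assertions about \emph{every} $E$ evenly covered with respect to $R_{2}$, so the argument should be run for an arbitrary such $E$; and your surjectivity argument for $\theta _{f}$ via $f(f^{-1}(G_{1}\ast ))=G_{1}\ast $ is correct and equivalent to the paper's.
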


\begin{proof}
We start by showing that $f$ is bi-uniformly continuous. Let $R$ be a root
entourage in $Y_{1}$. Since $f$ is uniformly continuous there is a root
entourage $R^{\prime }$ in $Y_{2}$ such that $f(R^{\prime })\subset R$.
Suppose that $F$ is an entourage in $X$ that is evenly covered with respect
to both $R$ and $R^{\prime }$. Since by Theorem \ref{uniformize} the set of
all $F_{R^{\prime }}^{\ast }$ is a basis for the uniform structure on $Y_{2}$%
, the proof will be complete if we show that 
\begin{equation}
f(F_{R^{\prime }}^{\ast })=F_{R}^{\ast }  \label{star2}
\end{equation}%
and hence $f(F_{R^{\prime }}^{\ast })$ is an entourage in $Y_{1}$. Since $%
f(R^{\prime })\subset R$ and $f_{1}\circ f=f_{2}$, 
\begin{equation}
f(F_{R^{\prime }}^{\ast })\subset f(R^{\prime })\cap f(f_{2}^{-1}(F))\subset
R\cap f_{1}^{-1}(F)=F_{R}^{\ast }\text{.}  \label{star}
\end{equation}%
For the reverse inclusion, let $(c,d)\in F_{R}^{\ast }$, i.e. $d\in
B(c,F_{R}^{\ast })$. Since $f$ is surjective there is some $c^{\prime }\in
Y_{2}$ such that $f(c^{\prime })=c$. From Proposition \ref{evenly} we know
that $F$-balls are evenly covered by both $F_{R}^{\ast }$ and $F_{R^{\prime
}}^{\ast }$-balls. If $h_{1}$ denotes the restriction of $f_{1}$ to $%
B(c,F_{R}^{\ast })$, then the restriction of $h_{1}^{-1}\circ f_{2}$ to $%
B(c^{\prime },F_{R^{\prime }})$ is a bijection onto $B(c,F_{R}^{\ast })$.
Therefore there is some $d^{\prime }\in B(c^{\prime },F_{R^{\prime }}^{\ast
})$ such that $h_{1}^{-1}(f_{2}(d^{\prime }))=d$. But by Inclusion (\ref%
{star}), $f(B(c^{\prime },F_{R^{\prime }}^{\ast }))\subset B(c,F_{R}^{\ast
}) $ and therefore $f(d^{\prime })\in B(c,F_{R}^{\ast })$. Since $h_{1}$ is
a bijection and $f_{1}(f(d^{\prime }))=f_{2}(d^{\prime })=f_{1}(d)$, it must
be that $f(d^{\prime })=d$.

Returning to the proof of Part 1a, we now know that $R_{1}$ is an entourage
and therefore we need only show that $R_{1}^{2}$ is contained in the
PD-domain of $Y_{1}$. Suppose that $(y,g_{1}(y))\in R_{1}^{2}$ for some $%
y\in Y_{1}$ and $g_{1}\in G_{1}$. This means that $(y,z),(z,g_{1}(y))\in
R_{1}$ for some $z\in Y_{1}$. Since $R_{1}=f(R_{2})$ this in turn means
there exist $(y^{\prime },z^{\prime }),(z^{\prime \prime },w^{\prime \prime
})\in R_{2}$ such that $f(y^{\prime })=y$, $f(z^{\prime })=f(z^{\prime
\prime })=z$, and $f(w^{\prime \prime })=g_{1}(y)$. Since 
\begin{equation*}
f_{2}(z^{\prime })=f_{1}(f(z^{\prime }))=f_{1}(z)=f_{1}(f(z^{\prime \prime
}))=f_{2}(z^{\prime \prime })\text{,}
\end{equation*}%
there is some $g_{2}\in G_{2}$ such that $g_{2}(z^{\prime \prime
})=z^{\prime }$. Since $R_{2}$ is invariant, letting $w^{\prime
}:=g_{2}(w^{\prime \prime })$, we have that $(w^{\prime },z^{\prime })\in
R_{2}$. Therefore $(y^{\prime },w^{\prime })\in R_{2}^{2}$, which is
contained in the PD-domain $P$ of $Y_{2}$. Now 
\begin{equation*}
f_{2}(y^{\prime })=f_{1}(f(y^{\prime
}))=f_{1}(y)=f_{1}(g_{1}(y))=f_{1}(f(w^{\prime \prime }))=f_{2}(w^{\prime
\prime })=f_{2}(w^{\prime })\text{.}
\end{equation*}%
Therefore there is some $h\in G_{2}$ such that $h(y^{\prime })=w^{\prime }$.
Since $(y^{\prime },w^{\prime })\in P$ this means that $h=1$ and hence $%
y^{\prime }=w^{\prime }$. This in turn means that $y=f(y^{\prime
})=f(w^{\prime })=g_{1}(y)$, so $g_{1}=1$, finishing the proof of Part 1a

For Part 1b, note that if $E\subset f_{2}(R_{2})$ then $E\subset
f_{1}(f(R_{2}))\subset f_{1}(R_{1})$. Next note that Equation (\ref{star2})
depended only on $f(R^{\prime })\subset R$ (and the entourage in question
being evenly covered by both) and therefore it applies to $R^{\prime
}:=R_{2} $ and $R:=R_{1}=f(R_{2})$.

For Part 1c, note that since $f_{2}(\alpha )=f_{2}(g(\alpha ))$ and $%
f_{1}\circ f=f_{2}$, the $E_{R_{1}}^{\ast }$-chain $f(g(\alpha ))$ is the
unique lift of $f_{2}(\alpha )$ to $Y_{1}$ at $f(g(\ast ))$, which ends at $%
f(g(y))$. On the other hand, by Proposition \ref{liftprop}, $\theta
_{f}(g)(f(y))$ is also the endpoint of the unique lift of $f_{1}(f(\alpha
))=f_{2}(\alpha )$ to $Y_{1}$ at $\theta _{f}(g)(\ast )=f(g(\ast ))$.
Therefore, $\theta _{f}(g(f(y)))=f(g(y))$.

If $E$ is properly covered with respect to $R_{2}$ then since $f$\ is
surjective, given $y_{1}\in Y_{1}$ we may find $y_{2}\in f^{-1}(y_{1})$ and
an $E_{R_{2}}^{\ast }$-chain $\alpha $ from $\ast $ to $y_{2}$. But then $%
f(\alpha )$ is an $f(E_{R_{2}}^{\ast })=E_{R_{1}}^{\ast }$-chain from $\ast $
to $y_{1}$. That is, $E$ is properly covered with respect to $R_{1}$.

For the second part, suppose $g_{1},g_{2}\in G_{2}$. By definition of $%
\theta _{f}$ and Part 1c, using $\{\ast \}$ as an $E_{R_{2}}^{\ast }$-chain
from $\ast $ to $\ast $, 
\begin{equation*}
\theta _{f}(g_{1}g_{2})(\ast )=f(g_{1}(g_{2}(\ast )))=\theta
_{f}(g_{1})(f(g_{2}(\ast )))=\theta _{f}(g_{1})(\theta _{f}(g_{2})(\ast ))%
\text{,}
\end{equation*}%
showing that $\theta _{f}$ is a homomorphism (since the action is free).

For surjectivity of $\theta _{f}$, suppose that $h\in G_{1}$. Since $f$ is
surjective there is some $x\in Y_{2}$ such that $f(x)=h(\ast )$. Since $%
f_{2}(x)=f_{1}(f(x))=f_{1}(h(\ast ))=\ast $ there is a unique $h_{2}\in
G_{2} $ such that $h_{2}(\ast )=x$. By definition, $\theta _{f}(h_{1})(\ast
)=f(h_{1}(\ast ))=f(x)=h(\ast )$. Since the action is free, $\theta
_{f}(h_{1})=h$. If $E$ is properly covered with respect to $R_{2}$ then the $%
E_{R_{2}}^{\ast }$-chain $\alpha $ in the statement of Part 1c always
exists, and compatibility follows.

For the last part, since $f$ is bi-uniformly continuous we need only show
that the point preimages of $f$ are the orbits of $K_{f}$ (see Remark \ref%
{quotient} and Proposition \ref{inducedp}), where $K_{f}$ is the kernel of $%
\theta _{f}$. If $x=g(y)$ then by Part 1c, $f(x)=f(g(y))=\theta
_{f}(g)(f(y))=f(y)$. Conversely, suppose that $f(x)=f(y)$. Then 
\begin{equation*}
f_{2}(x)=f_{1}(f(x))=f_{1}(f(y))=f_{2}(y)
\end{equation*}%
and therefore there is some $g\in G_{2}$ such that $x=g(y)$.
\end{proof}

We will now revisit the situation $\phi _{E}:X_{E}\rightarrow X$. By
Proposition 16.1 in \cite{BPUU}, the restriction of $\phi _{E}$ to any $%
E^{\ast }$-ball is 1-1. Now if $([\alpha ]_{E},[\beta ]_{E})\in \left(
E^{\ast }\right) ^{2}$, $[\alpha ]_{E}$ and $[\beta ]_{E}$ lie in some $%
E^{\ast }$-ball and therefore it is impossible for there to be some $g\neq 1$
in $\pi _{E}(X)$ with $g([\alpha ]_{E})=[\beta ]_{E}$. That is, $R=E^{\ast }$
is a root entourage, and in fact it is a canonical choice of root entourage
for $\phi _{E}$. Now if $F\subset E$, an immediate consequence of the
definition of $F^{\ast }$ is that $F^{\ast }=f^{-1}(F)\cap E^{\ast }$. In
other words, in the current terminology, $F^{\ast }=F_{E^{\ast }}^{\ast }$.
For simplicity and consistency we will continue to use the notation $F^{\ast
}$ for the lift of $F$ with respect to the root entourage $E^{\ast }$. In
this case the Chain Lifting Lemma has the following stronger form (\cite{PW}%
): If $\alpha =\{\ast =x_{0},...,x_{n}\}$ is an $F$-chain in $X$ with $%
F\subset E$ then the unique lift of $\alpha $ to $X_{E}$ at $\ast $ consists
of the $F^{\ast }$-chain 
\begin{equation}
\widetilde{\alpha }%
=\{[x_{0}]_{E},[x_{0},x_{1}]_{E},...,[x_{0},...,x_{n}]_{E}=[\alpha ]_{E}\}%
\text{.}  \label{SCL}
\end{equation}%
See also Equation (\ref{clf}), from which this stronger version follows. In
particular, $\widetilde{\alpha }$ ends at $[\alpha ]_{E}$. In order to avoid
confusion among what will ultimately amount to three different chain lifting
lemmas, we will refer to this collection of statements as the \textit{%
Special Chain Lifting Lemma (for }$\phi _{E}$). This statement implies that
every $[\alpha ]_{E}$ is joined to $\ast $ by the $E^{\ast }$-chain $%
\widetilde{\alpha }$, i.e. $X_{E}$ is $E^{\ast }$-connected. We summarize
these observations in a corollary that we will use below without reference:

\begin{corollary}
If $X$ is a uniform space and $E$ is an entourage, then for the map $\phi
_{E}:X_{E}\rightarrow X$, $E^{\ast }$ (as defined in \cite{BPUU}) is a root
entourage and $E$ is properly covered by the entourage $E^{\ast }$, which is
equal to $E_{E^{\ast }}^{\ast }$ as defined in the current paper.
\end{corollary}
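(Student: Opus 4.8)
The plan is to peel the corollary into its three assertions---that $E^{\ast }$ is a root entourage for $\phi _{E}$, that $E^{\ast }=E_{E^{\ast }}^{\ast }$, and that $E$ is properly covered by $E^{\ast }$---and to dispatch each using only the properties of $\phi _{E}$ recalled just above the statement. Write $G=\pi _{E}(X)$ and let $P$ be the PD-domain of the action of $G$ on $X_{E}$. I would first record two facts used repeatedly: each $F^{\ast }$ (for $F\subset E$) is an invariant, symmetric, open entourage of $X_{E}$ containing the diagonal, and the preconcatenation action of $G$ on $X_{E}$ is free. The latter is a one-line computation: if $[\lambda \ast \alpha ]_{E}=[\alpha ]_{E}$ then appending $\overline{\alpha }$ gives $[\lambda ]_{E}=[\lambda \ast \alpha \ast \overline{\alpha }]_{E}=[\alpha \ast \overline{\alpha }]_{E}=1$.

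For the root entourage claim the only substantive point is $(E^{\ast })^{2}\subset P$, since $E^{\ast }$ is already an invariant symmetric open entourage containing the diagonal. I would argue as follows: if $(p,q)\in (E^{\ast })^{2}$ then $p$ and $q$ lie in a common $E^{\ast }$-ball, and if moreover $q=g(p)$ for some $g\in G$ then $\phi _{E}(p)=\phi _{E}(g(p))=\phi _{E}(q)$ because $g$ is a deck transformation; the recalled fact that $\phi _{E}$ is injective on every $E^{\ast }$-ball (Proposition 16.1 of \cite{BPUU}) then forces $p=q$, so $(p,q)\in P$. The same injectivity shows that any $g\in S_{E^{\ast }}(G)$ fixes a point and hence, by freeness, is trivial; thus $G$ acts discretely and $E^{\ast }$ is a (canonical) root entourage for $\phi _{E}$. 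This is the heart of the corollary and the single place where Proposition 16.1 is needed.

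For the identity $E^{\ast }=E_{E^{\ast }}^{\ast }$ I would simply unwind Proposition \ref{domain}.2, which gives $E_{E^{\ast }}^{\ast }=\phi _{E}^{-1}(E)\cap E^{\ast }$; the inclusion $E^{\ast }\subset \phi _{E}^{-1}(E)$ is immediate because a pair $(p,q)\in E^{\ast }$ has endpoints $(a,b)\in E$, i.e.\ $(\phi _{E}(p),\phi _{E}(q))\in E$. This is just the $F=E$ instance of the general identity $F^{\ast }=\phi _{E}^{-1}(F)\cap E^{\ast }$.

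Finally, proper covering (Definition \ref{pdc}) asks that $E$ be evenly covered by $E^{\ast }$ and that $X_{E}$ be $E_{E^{\ast }}^{\ast }$-chain connected. For even covering a short computation gives $\phi _{E}(E^{\ast })=E\cap (U\times U)$, where $U=\phi _{E}(X_{E})$ is the $E$-chain component of $\ast $: given $(a,b)\in E$ with $a$ the endpoint of some $\alpha $, the alternative description of $E^{\ast }$ yields $([\alpha ]_{E},[\alpha \ast \{a,b\}]_{E})\in E^{\ast }$ with $\phi _{E}$-image $(a,b)$. Hence $E\subset \phi _{E}(E^{\ast })$ exactly when $\phi _{E}$ is onto, i.e.\ when $X$ is $E$-chain connected---automatic for a continuum, and in any case the condition already implicit in calling $\phi _{E}:X_{E}\rightarrow X$ a quotient map. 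For chain connectivity I would invoke the previous step to replace $E_{E^{\ast }}^{\ast }$ by $E^{\ast }$ and then cite the Special Chain Lifting Lemma: for any $E$-chain $\alpha =\{\ast =x_{0},\dots ,x_{n}\}$ the lift $\widetilde{\alpha }=\{[x_{0}]_{E},[x_{0},x_{1}]_{E},\dots ,[\alpha ]_{E}\}$ is an $E^{\ast }$-chain joining $\ast $ to $[\alpha ]_{E}$, so $X_{E}$ is $E^{\ast }$-chain connected. I expect no obstacle here beyond bookkeeping; all the genuine content sits in the injectivity argument of the second paragraph.
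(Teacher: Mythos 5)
Your proposal is correct and follows essentially the same route as the paper, which establishes this corollary in the paragraph immediately preceding it: injectivity of $\phi _{E}$ on $E^{\ast }$-balls (Proposition 16.1 of \cite{BPUU}) gives $(E^{\ast })^{2}\subset P$, the identity $F^{\ast }=\phi _{E}^{-1}(F)\cap E^{\ast }$ is read off from the definition, and the Special Chain Lifting Lemma gives $E^{\ast }$-chain connectedness of $X_{E}$. You are somewhat more explicit than the paper about freeness of the action, discreteness (needed for the term ``root entourage''), and the even-covering inclusion $E\subset \phi _{E}(E^{\ast })$, but these are elaborations of the same argument rather than a different approach.
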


\begin{theorem}
\label{quoteq}Let $f:Y\rightarrow X=Y/G$ be a discrete cover with root
entourage $R$ and properly covered entourage $E$. Define $%
f_{E}:X_{E}\rightarrow Y$ as follows. For any $[\alpha ]_{E}\in X_{E}$, let $%
f_{E}([\alpha ]_{E})$ be the endpoint of the unique lift $\widetilde{\alpha }
$ of the $E$-chain $\alpha $ to $Y$ at $\ast $. Then $f_{E}$ is a uniformly
continuous surjection such that $\phi _{E}=f\circ f_{E}$. In particular, $%
f\leq \phi _{E}$ and $f$ is equivalent to the induced quotient $\pi
:X_{E}/K\rightarrow \left( X_{E}/K\right) /\left( \pi _{E}(X)/K\right) $ for
some normal subgroup $K$ of $\pi _{E}(X)$.
\end{theorem}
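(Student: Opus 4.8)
The plan is to establish, in order, that $f_E$ is well-defined, that $\phi_E = f \circ f_E$, that $f_E$ is surjective, and that $f_E$ is uniformly continuous; the concluding assertion then follows by applying Theorem \ref{eqone} to the pair $f$ and $\phi_E$. Well-definedness is immediate from the General Chain Lifting Lemma (Lemma \ref{cl}): if $[\alpha]_E = [\beta]_E$, then the lifts $\widetilde{\alpha}$ and $\widetilde{\beta}$ to $Y$ at $\ast$ end at the same point, so the endpoint depends only on the $E$-homotopy class. The factorization $\phi_E = f \circ f_E$ is equally direct: if $a$ is the endpoint of $\alpha$, then $\phi_E([\alpha]_E) = a$, while $f_E([\alpha]_E)$ is the endpoint $\widetilde{a}$ of $\widetilde{\alpha}$, and since $f(\widetilde{\alpha}) = \alpha$ we get $f(\widetilde{a}) = a$.

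For surjectivity I would use proper coverage. Since $E$ is properly covered with respect to $R$, the space $Y$ is $E_R^{\ast}$-chain connected, so an arbitrary $y \in Y$ is joined to $\ast$ by an $E_R^{\ast}$-chain $\widetilde{\alpha}$. Then $\alpha := f(\widetilde{\alpha})$ is an $E$-chain in $X$ (because $f(E_R^{\ast}) = E$ by Proposition \ref{domain}), so $[\alpha]_E \in X_E$; as $\widetilde{\alpha}$ is a lift of $\alpha$ at $\ast$, the uniqueness clause of Lemma \ref{cl} identifies it with the lift used to define $f_E$, whence $f_E([\alpha]_E) = y$.

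The substantial work is uniform continuity, which I expect to be the principal obstacle. The entourages $F^{\ast}$ with $F \subset E$ form a basis for the uniformity of $X_E$, and the entourages $F_R^{\ast}$ form a basis for that of $Y$, so it suffices to show $(f_E \times f_E)(F^{\ast}) \subset F_R^{\ast}$ for $F \subset E$ evenly covered with respect to $R$. Given $([\alpha]_E, [\beta]_E) \in F^{\ast}$, I would use the equivalent description of $F^{\ast}$ to write, up to $E$-homotopy, $\beta = \alpha \ast \{a,b\}$ with $(a,b) \in F$, so that $f_E([\alpha]_E) = \widetilde{a}$ and $f_E([\beta]_E)$ is the endpoint of the lift of this single extra step based at $\widetilde{a}$. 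The delicate point is that this lifted step lands not merely in $B(\widetilde{a}, E_R^{\ast})$ but in $B(\widetilde{a}, F_R^{\ast})$: since $F$ is itself evenly covered with respect to $R$, Proposition \ref{evenly} makes the restriction of $f$ to $B(\widetilde{a}, F_R^{\ast})$ a bijection onto $B(a, F)$, yielding a unique preimage $\widetilde{b}$ of $b$ there; and because $F_R^{\ast} \subset E_R^{\ast}$, uniqueness of lifts inside $B(\widetilde{a}, E_R^{\ast})$ identifies this $\widetilde{b}$ with the endpoint of the lifted step. Hence $(f_E([\alpha]_E), f_E([\beta]_E)) = (\widetilde{a}, \widetilde{b}) \in F_R^{\ast}$, giving uniform continuity.

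Finally I would conclude via Theorem \ref{eqone}. Both $f$ and $\phi_E$ are discrete covers — the former by hypothesis, the latter because $E^{\ast}$ is a root entourage (by the preceding corollary) witnessing that the free preconcatenation action of $\pi_E(X)$ is discrete, so that $N_{E^{\ast}}(\pi_E(X)) = 1$. Since $f_E$ is a uniformly continuous surjection with $\phi_E = f \circ f_E$, we have $f \leq \phi_E$, and Theorem \ref{eqone} (taking $f_1 = f$, $f_2 = \phi_E$, and $f_E$ as the comparison map) yields that $f$ is equivalent to the induced quotient $\pi : X_E/K \to (X_E/K)/(\pi_E(X)/K)$, where $K = \ker \theta_{f_E}$ is the normal subgroup of $\pi_E(X)$ provided by that theorem.
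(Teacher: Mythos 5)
Your proposal is correct and follows essentially the same route as the paper's proof: well-definedness and surjectivity via chain lifting and proper coverage, uniform continuity by showing $f_{E}(F^{\ast })\subset F_{R}^{\ast }$ for $F\subset E$, and the final claim via Theorem \ref{eqone}. The only differences are cosmetic: the paper obtains $(\widetilde{a},\widetilde{b})\in F_{R}^{\ast }$ directly from $F_{R}^{\ast }=f^{-1}(F)\cap R$ together with $(\widetilde{a},\widetilde{b})\in E_{R}^{\ast }\subset R$, rather than through the bijectivity of $f$ on $F_{R}^{\ast }$-balls, and your citation of the General Chain Lifting Lemma for well-definedness is the apt one since the lift in question is to $Y$.
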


\begin{proof}
By the Special Chain Lifting Lemma, $f_{E}$ is well defined. Since $E$ is
properly covered, for any $y\in Y$ there is an $E_{R}^{\ast }$-chain $\beta $
from $\ast $ to $y$. Then $f(\beta )$ is an $E$-chain from $\ast $ to $f(y)$
in $X$. But then $\beta $ is the lift of $f(\beta )$ to $Y$ at $\ast $ and
by definition $f([\beta ]_{E})=y$. That is, $f_{E}$ is surjective. By
definition, $\phi _{E}=f\circ f_{E}$.

To prove that $f$ is uniformly continuous we will show that if $F\subset E$
then $f(F^{\ast })\subset F_{R}^{\ast }$ (the latter of which is a basis
element in $Y$ by Lemma \ref{full}). Suppose $([\alpha ]_{E},[\beta
]_{E})\in F^{\ast }$; so there are $E$-chains $\alpha =\{\ast
=x_{0},...,x_{n}=a\}$ and $\beta =\{\ast =x_{0},...,x_{n}=a,b\}$ with $%
(a,b)\in F$. Now $f_{E}([\alpha ]_{E})$ is the endpoint of the unique lift $%
\widetilde{\alpha }=\{\ast =\widetilde{x_{0}},...,\widetilde{a}\}$ of $%
\alpha $ to $Y$ at $\ast $, and by uniqueness the lift $\widetilde{\beta }$
of $\beta $ to $Y$ at $\ast $ must be $\{\ast =\widetilde{x_{0}},...,%
\widetilde{a},\widetilde{b}\}$ for some $\widetilde{b}\in f^{-1}(b)$. So $%
f_{E}([\alpha ]_{E},[\beta ]_{E})=(\widetilde{a},\widetilde{b})\in f^{-1}(F)$%
. Now $\widetilde{\alpha }$ and $\widetilde{\beta }$ are $E_{R}^{\ast }$%
-chains and therefore $(\widetilde{a},\widetilde{b})\in E_{R}^{\ast }\subset
R$. Therefore $f([\alpha ]_{E},[\beta ]_{E})=(\widetilde{a},\widetilde{b}%
)\in f^{-1}(F)\cap R=F_{R}^{\ast }$. This shows that $f\leq \phi _{E}$, and
the last statement is an immediate consequence of Theorem \ref{eqone}.4.
\end{proof}

\begin{remark}
As mentioned previously, $E^{\ast }$ is the canonical choice for a root
entourage in $X_{E}$. From Theorem \ref{quoteq} and Theorem \ref{unique} we
now see that if $f:Y\rightarrow X=Y/G$ is a proper discrete cover then for
any properly covered entourage $E$ there is a canonical choice for a root
entourage in $Y$, namely $R:=f(E^{\ast })$, and $E$ is properly covered with
respect to $R$.
\end{remark}

\begin{theorem}
\label{corr}Suppose $X$ is a continuum and $E$ is an entourage. Then there
exists a finitely generated subgroup $H$ of $\pi _{E}(X)$ that is the
stabilizer subgroup of a uniformly open (hence closed) set $J\subset X_{E}$
containing the basepoint such that the following is true. If $f:Y\rightarrow
X/G$ is a discrete cover of $X$ by a continuum and $E$ is evenly covered
with respect to some root entourage $R$ in $Y$, then for some normal
subgroup $K$ of $H$, $f$ is equivalent to the induced quotient $\pi
:J/K\rightarrow \left( J/K\right) /\left( H/K\right) $.
\end{theorem}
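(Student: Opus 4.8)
The plan is to construct the universal pair $(J,H)$ directly from the fundamental inverse system and then show every cover factors through it. First I would fix an entourage $F$ with $F^6 \subset E$ and set $J := U_\ast^{F^*}$, the $F^*$-chain component of the basepoint in $X_E$. Since $F^*$ is invariant under the $\pi_E(X)$-action, $J$ has Property ST by Lemma \ref{cc}, is uniformly $F^*$-open (hence closed), and $H := S_J$ is its stabilizer. Using the Special Chain Lifting Lemma I would identify $H$ concretely: a loop class $[\lambda]_E$ lies in $J$ iff there is an $F^*$-chain from $\ast$ to $[\lambda]_E$; projecting such a chain by $\phi_E$ (which carries $F^*$ into $F$) gives an $F$-loop $\mu$, and by uniqueness of lifts the chain is the lift of $\mu$, so $[\lambda]_E=[\mu]_E$. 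Conversely every $F$-loop lifts to an $F^*$-chain from $\ast$. Hence $H = \theta_{EF}(\pi_F(X))$, the image of $\pi_F(X)$ in $\pi_E(X)$ under the bonding map, matching the viewpoint flagged in Remark \ref{close}.

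The main obstacle is proving that $H = \theta_{EF}(\pi_F(X))$ is finitely generated, and this is exactly where the delicate homotopy bookkeeping of Remark \ref{close} enters and explains the choice $F^6 \subset E$. I would use compactness of $X$ to fix a finite $F$-net $\{q_0 = \ast, \ldots, q_m\}$ together with fixed $F$-chains $\sigma_i$ from $\ast$ to each $q_i$, and take as candidate generators the finitely many lasso classes built from the $\sigma_i$ and short jumps between net points. Given an $F$-loop $\lambda$ representing an element of $H$, snapping each vertex of $\lambda$ to a nearby net point is the first stage of loss: since each vertex moves within $F$, consecutive snapped points are $F^3$-close, so one obtains $F^3$-chains that are $F$-close to $\lambda$. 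A compactness argument in $X$ then extracts from these a limiting $F^4$-chain, and because the approximants are $F$-close to this limit they are $F^6$-homotopic, hence $E$-homotopic, to it. Tracking these two stages of loss lets me write $[\lambda]_E$ as a finite product of the candidate generator classes, which gives finite generation; this is precisely the finiteness that later feeds the $n!$ count of Theorem \ref{factor}.

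With $(J,H)$ fixed, the factorization of a given cover is comparatively routine. Given a discrete cover $f : Y \to X = Y/G$ by a continuum with $E$ evenly covered with respect to a root entourage $R$, the continuum $Y$ is $E_R^*$-chain connected, so $E$ is properly covered and Theorem \ref{quoteq} supplies a uniformly continuous surjection $f_E : X_E \to Y = X_E/K'$ with $\phi_E = f \circ f_E$ and $K' \trianglelefteq \pi_E(X)$. I would then show $f_E|_J : J \to Y$ is surjective: any $y \in Y$ is joined to $\ast$ by an $F_R^*$-chain $\gamma$ (as $Y$ is a continuum and $F \subset E$ is evenly covered with respect to $R$), whose projection $f(\gamma)$ is an $F$-chain in $X$; its lift to $X_E$ at $\ast$ is an $F^*$-chain and so lands in $J$, while by uniqueness in the General Chain Lifting Lemma $f_E$ sends its endpoint to the endpoint $y$ of $\gamma$.

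Finally I would set $K := H \cap K'$, which is normal in $H$ since $K' \trianglelefteq \pi_E(X)$. Applying Lemma \ref{cc} to the two chain-component restrictions yields quotient maps $\phi_E|_J : J \to X = J/H$ and $f_E|_J : J \to Y = J/K$; the latter holds because the point preimages of $f_E|_J$ are exactly the $K$-orbits, as Property ST forces the gluing element of $K'$ to stabilize $J$ and hence to lie in $H \cap K'$. Since $\phi_E|_J = f \circ f_E|_J$, Proposition \ref{inducedp} identifies $f$ with the induced quotient $\pi : J/K \to (J/K)/(H/K)$, which completes the proof. The only genuinely hard step is the finite generation of $H$; everything after it is an orchestration of the chain-lifting and quotient lemmas already established.
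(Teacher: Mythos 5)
Your overall architecture matches the paper's: an $F$-net with $F^{6}\subset E$, decomposition of $F$-loops into short loops through the net, a compactness argument for finiteness, and a factorization of the given cover via Theorem \ref{quoteq}, Lemma \ref{cc} and Proposition \ref{inducedp}. But there is a genuine gap at the step you yourself flag as the only hard one. You take $J$ to be the $F^{\ast }$-chain component $I=\phi _{EF}(X_{F})$ of the basepoint and $H=S_{J}=\theta _{EF}(\pi _{F}(X))=:L$, and you claim that $L$ is finitely generated. Your decomposition does not prove this: snapping the vertices of an $F$-loop $\lambda $ to net points and splitting at repeated points produces short \emph{$F^{3}$-loops} through the net, and the $E$-homotopy classes of these short loops are your generators. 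Such a class need not lie in $L$, because an $F^{3}$-loop need not be refinable within its $E$-homotopy class to an $F$-loop --- this is exactly the refinability failure discussed in Remark \ref{close} and illustrated by Example \ref{solenoid}. So your argument only shows that $L$ is \emph{contained in} the finitely generated group generated by those classes; since a subgroup of a finitely generated group need not be finitely generated, you cannot conclude that $L$ itself is. The finite generation of $H$ is not a cosmetic requirement: Theorem \ref{factor} applies Hall's bound on the number of index-$n$ normal subgroups of a finitely generated group to precisely this $H$.

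The paper's proof is structured to dodge exactly this point: it keeps the larger group $H=\left\langle \Gamma \right\rangle \supseteq L$ generated by the finitely many classes of short loops, and correspondingly enlarges the set, taking $J:=H(I)=\dbigcup\limits_{k\in H}k(I)$. It then checks that $J$ is uniformly $F^{\ast }$-open, that $H=S_{J}$, and that $J$ has Property ST (via the computation $h_{2}^{-1}kh_{1}\in L\subset H$), so that the restriction $\rho $ of $\phi _{E}$ to $J$ is a discrete cover $\rho :J\rightarrow X=J/H$ and Theorem \ref{eqone} delivers the induced-quotient description. Your factorization steps --- surjectivity of the restriction of $f_{E}$ onto $Y$ using $F_{R}^{\ast }$-chain connectedness of the continuum $Y$, the choice $K:=H\cap K_{f_{E}}$, and the appeal to Lemma \ref{cc} and Proposition \ref{inducedp} --- are sound and mirror the paper's, but they must be run with the enlarged pair $(J,H)$ rather than with $(I,L)$; with your choice the conclusion of the theorem, in particular the finite generation of the stabilizer subgroup, is not established.
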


\begin{proof}
To simplify matters for the reader we will outline the proof first. Let $F$
be an entourage in $X$ such that $F^{6}\subset E$ and cover $X$ by a finite
collection of $F$-balls $\mathcal{B}=\{B(c_{i},F)\}_{i=1}^{M}$ that includes 
$c_{0}:=\ast $. Let $L:=\theta _{EF}(\pi _{F}(X))$. That is, $L$ consists of
all $[\lambda ]_{E}\in \pi _{E}(X)$ such that we may take $\lambda $ to be
an $F$-loop. The first step will be to replace any such $\lambda $, up to $%
F^{3}$-homotopy, by an $F^{3}$-loop, each of the points of which is equal to
some $c_{i}$. We will then use the discrete version of a trick from
classical homotopy theory to write, up to $F^{3}$-homotopy, this modified
loop as a product of loops, each of which has at most $2M+1$ points, each
equal to some $c_{i}$. Then compactness and the \textquotedblleft $E^{n}$%
-close $E^{m}$-loops are $E^{m+n}$-homotopic\textquotedblright\ trick
(Remark \ref{close}) show that there are only finitely many such loops, i.e. 
$L$ is contained in some finitely generated group $H$. We then see that $L$
is the stabilizer subgroup of $I:=\phi _{EF}(X_{F})$, and define $J$ to be $%
H(I)=\dbigcup\limits_{k\in H}k(I)$, which has $H$ as its stabilizer
subgroup. It then follows that that the restriction $\rho $ of $\phi _{E}$
to $J$ is a discrete cover of $X$ via the action of the finitely generated
group $H$. Then Theorem \ref{eqone} finishes the proof.

Here are the details. We first claim there is a finite set $\Gamma =\{\gamma
_{i}\}$ in $\pi _{E}(X)$ such that every element of $L$ is a product of
elements of $\Gamma $. In other words, $L$ is contained in the subgroup $H$
of $\pi _{E}(X)$ generated by $\Gamma $.

We subclaim that any $F$-loop $\lambda :=\{\ast =x_{0},...,x_{n}=\ast \}$ is 
$F^{3}$ homotopic to an $F^{3}$-loop 
\begin{equation*}
\lambda ^{\prime }:=\{\ast
=c_{i_{0}},c_{i_{1}},...,c_{i_{n-1}},c_{i_{n}}=\ast \}\text{.}
\end{equation*}%
The iterative argument is similar to one in the proof of Theorem 37 in \cite%
{BPUU}), but for completeness and due to changes in notation, we will prove
it here. For every $0<j<n$, $x_{j}\in B(c_{i_{j}},F)$ for some $c_{i_{j}}$.
Each of the following moves is a basic $F^{2}$-move: 
\begin{equation*}
\lambda \rightarrow \{x_{0},x_{1},x_{1},x_{2},...,x_{n}\}\rightarrow
\{x_{0},x_{1},c_{i_{1}},x_{1},x_{2},...,x_{n}\}
\end{equation*}%
\begin{equation*}
\rightarrow \{x_{0},c_{i_{1}},x_{1},x_{2},...,x_{n}\}\rightarrow
\{x_{0},c_{i_{1}},x_{2},...,x_{n}\}\text{.}
\end{equation*}%
Note also that for any $j$, we have $%
(c_{i_{j}},x_{j}),(x_{j},x_{j+1}),(x_{j+1},c_{i_{j+1}})\in F$ and therefore $%
(c_{i_{j}},x_{j+1})\in F^{2}$ and $(c_{i_{j}},c_{i_{j+1}})\in F^{3}$. Now
each of the following is a basic $F^{3}$-move:%
\begin{equation*}
\{x_{0},c_{i_{1}},x_{2},x_{3}...,x_{n}\}\rightarrow
\{x_{0},c_{i_{1}},c_{i_{2}},x_{2},x_{3}...,x_{n}\}
\end{equation*}%
\begin{equation*}
\rightarrow \{x_{0},c_{i_{1}},x_{2},c_{i_{2}},x_{3},...,x_{n}\}\text{.}
\end{equation*}%
Proceeding iteratively finishes the proof of the subclaim.

Now suppose that $n>M$, i.e. there is a repeated point in $\lambda ^{\prime
} $. That is, for some smallest $i_{j}$ and $i_{k}>i_{j}$, $%
c_{i_{j}}=c_{i_{k}} $. Then $\lambda ^{\prime }$ is $F^{3}$-homotopic to the
concatenation of 
\begin{equation*}
\lambda ^{\prime \prime }=\{\ast
=c_{i_{0}},...,c_{i_{j}},c_{i_{j}+1},...,c_{i_{k}}=c_{i_{j}},c_{i_{j}-1},...,c_{i_{0}}=\ast \}
\end{equation*}%
and 
\begin{equation*}
\lambda ^{\prime \prime \prime }=\{\ast
=c_{i_{0}},...,c_{i_{j}}=c_{i_{k}},c_{i_{k+1}},...,c_{i_{n}}=\ast \}\text{.}
\end{equation*}%
Here the discrete homotopies are all \textquotedblleft
retractions\textquotedblright\ of the form $[\tau \ast \overline{\tau }%
]_{F^{3}}=[t]_{F^{3}}$, where $t$ is the endpoint of $\tau $. That is, $%
[\lambda ^{\prime }]_{F^{3}}=[\lambda ^{\prime \prime }]_{F^{3}}[\lambda
^{\prime \prime \prime }]_{F^{3}}$, where $\lambda ^{\prime \prime }$ has at
most $2M+1$ points and $\lambda ^{\prime \prime \prime }$ has strictly fewer
points than $\lambda ^{\prime \prime }$. Repeating this process shows that $%
[\lambda ]_{F^{3}}$ is a product of $F^{3}$-equivalence classes of $F^{3}$%
-loops having at most $2M+1$ points. The proof of our claim will be finished
if we show that the set $\Gamma $ of $E$-homotopy equivalence classes of
such loops is finite. Suppose $\{\lambda _{m}\}_{m=1}^{\infty }$ is a
sequence of $F^{3}$-loops each having at most $2M+1$ points such that no two
are $F^{3}$-homotopic. By duplicating points if necessary (which doesn't
change the $F^{3}$-homotopy class) we can suppose that $\lambda _{m}=\{\ast
=z_{0}^{m},...,z_{2M}^{m}=\ast \}$. Taking a subsequence if necessary we may
assume that for all $j$, $z_{j}^{m}\rightarrow z_{j}$ for some $z_{j}$. Then 
$\lambda =\{\ast =z_{0},...,z_{2K}=\ast \}$ is an $\overline{F^{3}}$-loop
and hence is an $F^{5}$-loop. In fact, since $(z_{i},z_{i+1})$ is in the
closure of $F^{3}$, we may find $(z_{i}^{\prime },z_{i+1}^{\prime })\in
F^{3} $ such that $z_{i}^{\prime }\in B(z_{i},F)$ and $z_{i+1}^{\prime }\in
B(z_{i+1},F)$. But then $(z_{i},z_{i+1})\in F^{5}$.

Now for all $j$ and all large enough $m$, $(z_{j}^{m},z_{j})\in F$.
Referring again to Remark \ref{close} we see that that $\lambda _{m}$ is $%
F^{6}$-homotopic to $\lambda $ and therefore for all large $m,n$, $\lambda
_{m}$ is $F^{6}$-homotopic, hence $E$-homotopic, to $\lambda _{n}$. This is
a contradiction, proving the claim that $\Gamma $ is finite.

Now define $I:=\phi _{EF}(X_{F})$. We claim that $I$ is the $F^{\ast }$%
-chain component of $\ast $ in $X_{E}$. Suppose that $[\alpha ]_{E}\in I$,
which means we can assume that $\alpha $ is an $F$-chain, and suppose that
for some $[\beta ]_{E}$, $([\alpha ]_{E},[\beta ]_{E})\in F^{\ast }$. This
means $[\overline{\alpha }\ast \beta ]_{E}=[a,b]_{E}$ with $(a,b)\in F$.
Note that $[\beta ]_{E}=[\alpha \ast \{a,b\}]_{E}$ and $\alpha \ast \{a,b\}$
is an $F$-chain, showing that $[\beta ]_{E}\in I$. That is, $I$ is uniformly 
$F^{\ast }$-open and therefore contains the $F^{\ast }$-chain component of $%
\ast $ in $X_{E}$. On the other hand, if $[\alpha ]_{E}\in I$, then by the
Special Chain Lifting Lemma, the lift of $\alpha $ to $X_{E}$ at the
basepoint is an $F^{\ast }$-chain that ends at $[\alpha ]_{E}$. That is, $I$
is $F^{\ast }$-chain connected and is therefore the $F^{\ast }$-chain
component of $\ast $ in $X_{E}$.

We next claim that $L$ is the stabilizer subgroup of $I$. In fact, if $%
[\lambda ]_{E}\in L$, this means that $\lambda $ is an $F$-chain and $%
[\lambda ]_{E}([\ast ]_{E})=[\lambda ]_{E}\in I$. The claim now follows from
Lemma \ref{cc}.

Now let $J:=H(I)=\{g(x):g\in H$ and $x\in I\}$. Put another way, $J$ is the
union of the translates $g(I)$ with $g\in H$. Note that since $F^{\ast }$ is
invariant with respect to $\pi _{E}(X)$, and $I$ is uniformly $F^{\ast }$%
-open, $J$ is uniformly $F^{\ast }$-open. Since $X$ is (chain) connected, $%
\phi _{F}:X_{F}\rightarrow X$ is onto, and since $X=\phi _{F}(X_{F})=\phi
_{E}(\phi _{F}(X_{F}))=\phi _{E}(I)\subset \phi _{E}(J)$, the restriction $%
\rho $ of $\phi _{E}$ to $J$ is onto.

We next claim that $H$ is the stabilizer subgroup $S_{J}$ of $J$ and $J$ has
Property ST, so we may apply Lemma \ref{cc} to conclude that $\rho
:J\rightarrow X=J/H$ is a discrete cover. If $x=g(y)\in J$ with $y\in I$ and 
$g\in H$, and $h\in H$, then $h(x)=hg(y)\in J$, so $h(J)\subset J$. Now let $%
w\in J$. Since $h$ is a bijection there is some $z\in X_{E}$ such that $%
h(z)=w$, and hence $h^{-1}(w)=z$. But by what we just showed, $z\in J$,
showing that $J=h(J)$, i.e. $H\subset S_{J}$.

We next note that $R:=E^{\ast }\cap \left( J\times J\right) $ is a root
domain for $\rho $ (since as we have previously observed, $E^{\ast }$ is a
root domain for $\phi _{E}$). We claim that $F$ is evenly covered with
respect to $R$. In fact, suppose that $(a,b)\in F$. Since $\phi _{E}(F^{\ast
})=F$ there exist $([\alpha ]_{E},[\beta ]_{E})\in F^{\ast }$ such that $%
(a,b)=(\phi _{E}([\alpha ]_{E},[\beta ]_{E})$. Since $\rho $ is surjective
there is some $z\in J$ such that $\rho (z)=a$ and therefore there is some $%
g\in \pi _{E}(X)$ such that $g([\alpha ]_{E})=z$. Since $F^{\ast }$ is
invariant, $(z,g([\beta ]_{E})\in F^{\ast }$. Since $J$ is uniformly $%
F^{\ast }$-open, $g([\beta ]_{E})\in J$. Therefore $(z,g([\beta ]_{E}))\in
F^{\ast }\cap (J\times J)\subset R$, showing that $F\subset \rho (R)$.

Now suppose $k\in \pi _{E}(X)$ and $k(a)=b$ for some $a,b\in J$ (which
includes the case when $k\in S_{J}$). Suppose $a=h_{1}(a^{\prime })$ and $%
b=h_{2}(b^{\prime })$ with $h_{1},h_{2}\in H$ and $a^{\prime },b^{\prime
}\in I$. That is, $k(h_{1}(a^{\prime }))=h_{2}b^{\prime }\Rightarrow
h_{2}^{-1}\circ k\circ h_{1}(a^{\prime })=b^{\prime }$; so $%
h_{2}^{-1}kh_{1}=\lambda \in L\subset H$. That is, $k=h_{2}\lambda
h_{1}^{-1}\in H$, proving both that $H=S_{J}$ and that $J$ has Property ST.

Note that since $F\subset E$, $F$ is also evenly covered with respect to $f$%
. Let $f_{J}:J\rightarrow Y$ be the restriction of the discrete covering map 
$f_{E}:X_{E}\rightarrow Y$ from Theorem \ref{quoteq}. That is, $%
f_{E}([\alpha ]_{E})$ is the endpoint of the lift of $\alpha $ to $Y$ at $%
\ast $. We claim that $f_{J}$ is surjective. In fact, there is some $%
F_{R}^{\ast }$-chain $\beta $ from $\ast $ to any $y\in Y$. But then $\alpha
:=f(\beta )$ is an $F$-chain, and $\alpha $ has a unique lift to an $F^{\ast
}$-chain $\widetilde{\alpha }$ to $X_{E}$ at $\ast $, which ends at $[\alpha
]_{E}$. But since $I$ is uniformly $F^{\ast }$-open, $\widetilde{\alpha }$
must stay in $I$ and therefore $[\alpha ]_{E}\in I\subset J$. By definition
of $f_{E}$, $f_{E}([\alpha ]_{E})=y$ and so $f_{J}$ is surjective. Since $%
f_{J}$ is uniformly continuous and satisfies $f\circ f_{J}=\rho $, we have
that $f\leq \rho $. The proof is now finished by Theorem \ref{eqone}.
\end{proof}

\begin{proof}[Proof of Theorem \protect\ref{factor}]
Suppose that a continuum $X$ has $m>n!$ $n$-fold covers by continuua. Let $E$
be an entourage in $X$ that is evenly covered by all of them. Then the
finitely generated group $H$ from Theorem \ref{corr} has $m>n!$ normal
subgroups of index $n$, contradicting a classical algebraic theorem proved
by Hall (\cite{H}, Section 2).
\end{proof}

\begin{remark}
\label{3mfd}There is a natural question: when is a covering map of the form $%
\phi _{E}:X_{E}\rightarrow X$, i.e. when can we take $K_{f_{E}}$ to be the
trivial group? This question is mostly answered for compact smooth manifolds 
\cite{PS}: For any compact smooth manifold $M$ of dimension at least 3, the
\textquotedblleft entourage covers\textquotedblright , i.e. $\phi
_{E}:M_{E}\rightarrow M$, where $E$ is an entourage $E$ has a natural
property that ensures $M_{E}$ is connected, are precisely those covering
maps corresponding to subgroups $G$ of $\pi _{1}(M)$ that are the normal
closures of finite sets (Theorem 8, \cite{PS}). For the one and only compact
manifold of dimension 1, the only entourage covers are the trivial cover and
the universal cover (Example 68, \cite{PS}). For compact surfaces, the
characterization of entourage covers is an open question.
\end{remark}

\section{Generalized Regular Covering Maps}

\begin{definition}
\label{guc}If $G$ is complete and acts isomorphically and prodiscretely
(resp. discretely) on a uniform space $X$ then the quotient map $\phi
:X\rightarrow X/G$ is called a \textit{generalized regular covering map}
(resp. \textit{a discrete regular covering map or simply a discrete cover})
with \textit{deck group} $G$.
\end{definition}

Note that if $G$ acts discretely then $G$, as a (uniformly) discrete group,
is automatically complete. For those reading \cite{PQ} we note that what we
more appropriately are calling \textquotedblleft generalized regular
covering maps\textquotedblright\ in the present paper were simply called
\textquotedblleft covers of uniform spaces\textquotedblright\ in \cite{PQ}.
We also note that Lemma 39 in that paper inadvertently leaves out the word
\textquotedblleft closed\textquotedblright\ prior to \textquotedblleft
subgroup\textquotedblright .

We will need the following theorem summarizing some results about inverse
systems of quotients, much of it derived from \cite{PQ}. We note that in the
special case when $G$ is the fundamental group of a Poincar\'{e} space $Y$, $%
X=\widetilde{Y}$, and the indexing set is $\mathbb{N}$, McCord (\cite{Mc})
proved Theorem \ref{snark}.2 (Theorem 5.8) and the first part of Theorem \ref%
{snark}.4 (Theorem 5.12). Those proofs in fact do not need the stronger
assumptions of \cite{Mc} and carry over directly to the current setting.

\begin{theorem}
\label{snark}Suppose $G$ acts isomorphically on a uniform space $X$. Suppose
that $\mathcal{K}=\{K_{\alpha }\}$ is a directed set of normal subgroups of $%
G$ ordered by reverse inclusion. Let $X_{\alpha }:=X/K_{\alpha }$ and define 
$\phi _{\alpha \beta }:X_{\beta }\rightarrow X_{\alpha }$ by $\phi _{\alpha
\beta }(K_{\beta }x)=K_{\alpha }x$. Define $G_{\alpha }:=G/K_{\alpha }$ and $%
\theta _{\alpha \beta }:G_{\beta }\rightarrow G_{\alpha }$ by $\theta
_{\alpha \beta }(gK_{\beta })=gK_{\alpha }$. Finally, let $\phi
:X\rightarrow \lim \underleftarrow{X_{\alpha }}:=\overline{X}$ and $\theta
:G\rightarrow \underleftarrow{G_{\alpha }}:=\overline{G}$ be the unique maps
determined by the quotient mappings $\phi _{\alpha }:X\rightarrow X_{\alpha
}=X/G_{\alpha }$ and $\theta _{\alpha }:G\rightarrow G_{\alpha }$. Then

\begin{enumerate}
\item The systems $\{X_{\alpha },\phi _{\alpha \beta }\}$ and $\{G_{\alpha
},\theta _{\alpha \beta }\}$ comprise an isomorphic inverse system of
quotients. (As defined in \cite{PQ}, this means that the natural
compatibility condition $\phi _{\alpha \beta }\circ g=\theta _{\alpha \beta
}(g)\circ \phi _{\alpha \beta }$ is satisfied for all $\alpha \leq \beta $
and $g\in G_{\beta }$.) Moreover, the projections $\phi ^{\alpha }:\overline{%
X}\rightarrow X_{\alpha }$ and $\theta ^{\alpha }:\overline{G}\rightarrow G$
are surjective for all $\alpha $.

\item The map $\phi $ is uniformly continuous with dense image in $\overline{%
X}$, with the inverse limit uniformity.

\item The map $\theta $ is a homomorphism with dense image in $\overline{G}$%
, with the inverse limit topology, which is the same as the topology of
uniform convergence.

\item If $G$ acts freely then the following are equivalent:

\begin{enumerate}
\item $\cap _{\alpha }K_{\alpha }=\{1\}$

\item The maps $\phi $ and $\theta $ are injective.

\item The map $\phi $ or the map $\theta $ is injective.
\end{enumerate}

\item If every collection of orbits $\{K_{\alpha }x_{\alpha }\}$ such that $%
K_{\alpha }x_{\alpha }\subset K_{\beta }x_{\beta }$ whenever $\beta \leq
\alpha $ has non-empty intersection then $\phi $ and $\theta $ are
surjective. This is in particular always true if the orbits are compact.

\item If $\phi $ is surjective and for every entourage $F$ in $X$ there is
some $K_{\alpha }$ such that $K_{\alpha }\subset U_{F}(G)$, and $G$ acts
freely, then $\phi $ is equivalent to the quotient map $\pi :X\rightarrow
X/K $, where $K=\cap _{\alpha }K_{\alpha }$. If in addition $G$ acts
discretely (resp. each $K_{\alpha }$ is complete and $G$ acts prodiscretely)
then $\phi $ is a discrete (resp. generalized) regular covering map.
\end{enumerate}
\end{theorem}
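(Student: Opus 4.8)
The plan is to treat the six parts in increasing order of difficulty, since Parts 1--3 are essentially inverse-limit bookkeeping while the real content lies in Parts 5 and 6. For Part 1 the compatibility identity $\phi_{\alpha\beta}\circ g = \theta_{\alpha\beta}(g)\circ\phi_{\alpha\beta}$ follows by writing $g = hK_\beta$ and evaluating both sides on an orbit $K_\beta x$, using that $G_\beta$ acts on $X_\beta = X/K_\beta$ by $hK_\beta(K_\beta x) = K_\beta h(x)$ and that $K_\alpha \supseteq K_\beta$ when $\alpha \le \beta$. For surjectivity of the projections I would avoid any Mittag--Leffler argument and instead use the defining relations $\phi_\alpha = \phi^\alpha\circ\phi$ and $\theta_\alpha = \theta^\alpha\circ\theta$: since each quotient map $\phi_\alpha$ and $\theta_\alpha$ is surjective, $\phi^\alpha$ and $\theta^\alpha$ are already surjective on the image of $\phi$, resp.\ $\theta$. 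Parts 2 and 3 then follow formally: uniform continuity of $\phi$ (and the homomorphism property of $\theta$) is the universal property of the inverse limit applied to the $\phi_\alpha$ (resp.\ the homomorphisms $\theta_\alpha$), while density is immediate from the surjectivity of each coordinate map together with directedness of $\mathcal{K}$, because a basic entourage of the inverse limit constrains only a single coordinate. The identification of the inverse-limit topology on $\overline{G}$ with the topology of uniform convergence I would simply cite from \cite{PQ}.

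For Part 4, assuming $G$ acts freely, (a)$\Rightarrow$(b) is direct: $\theta(g)=\theta(h)$ forces $g^{-1}h\in\bigcap_\alpha K_\alpha=\{1\}$, and $\phi(x)=\phi(y)$ gives, for each $\alpha$, some $k_\alpha\in K_\alpha$ with $k_\alpha(x)=y$, whereupon freeness makes all $k_\alpha$ equal to a single $k\in\bigcap_\alpha K_\alpha=\{1\}$. The implication (b)$\Rightarrow$(c) is trivial, and (c)$\Rightarrow$(a) is the contrapositive: a nontrivial $g\in\bigcap_\alpha K_\alpha$ would satisfy $\theta(g)=\theta(1)$ and, by freeness, $\phi(g(x))=\phi(x)$ with $g(x)\ne x$, contradicting injectivity of either map.

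For Part 5 the key observation is that a compatible sequence in $\overline{X}$ is exactly a family of orbits $K_\alpha x_\alpha$ that is \emph{nested}: the compatibility $\phi_{\alpha\beta}(K_\beta x_\beta)=K_\alpha x_\beta=K_\alpha x_\alpha$ together with $K_\beta\subseteq K_\alpha$ gives $K_\beta x_\beta\subseteq K_\alpha x_\alpha$ for $\beta\ge\alpha$. The hypothesis supplies a point $x$ in the intersection, and $K_\alpha x = K_\alpha x_\alpha$ for all $\alpha$ means $\phi(x)$ is the given point, so $\phi$ is surjective. For $\theta$ I would run the parallel argument on the nested cosets $g_\alpha K_\alpha\subseteq G$, transporting them to the nested $X$-orbits $K_\alpha(g_\alpha(\ast))$ via the basepoint orbit map; the intersection hypothesis then produces $g\in G$ with $\theta(g)=(g_\alpha K_\alpha)$. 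This transport is cleanest when the action is free, so that $g\mapsto g(\ast)$ is injective on orbits, which is the only case the paper actually uses; compactness of the orbits yields the intersection hypothesis by the finite intersection property.

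Part 6 is where the work is. Since $G$ acts freely, the point preimages of $\phi$ are exactly the orbits of $K=\bigcap_\alpha K_\alpha$: if $k_\alpha(x)=y$ with $k_\alpha\in K_\alpha$ for all $\alpha$ then freeness collapses these to one $k\in K$. By Remark \ref{quotient} it then suffices to prove that $\phi$ is bi-uniformly continuous onto $\overline{X}$ (which equals $\phi(X)$ by surjectivity), and this is the main obstacle. The hard part is showing $\phi(E)$ is an entourage for each entourage $E$ in $X$; here I would choose $F$ with $F^3\subseteq E$ and use the hypothesis to pick $K_\alpha\subseteq U_F(G)$, so that every $k\in K_\alpha$ moves points by at most $F$. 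Given a basic entourage $(\phi^\alpha)^{-1}(\phi_\alpha(F))$ and a pair $(\phi(x),\phi(y))$ in it, the relation $(K_\alpha x, K_\alpha y)\in\phi_\alpha(F)$ produces $k_1,k_2\in K_\alpha$ with $(k_1(x),k_2(y))\in F$; since $k_1,k_2\in U_F(G)$ we get $(x,y)\in F^3\subseteq E$, whence $(\phi(x),\phi(y))\in\phi(E)$. Thus $\phi(E)\supseteq(\phi^\alpha)^{-1}(\phi_\alpha(F))$ is an entourage, $\phi$ is bi-uniformly continuous, and $\phi$ is equivalent to $\pi\colon X\to X/K$. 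Finally, $K$ inherits a discrete (resp.\ prodiscrete) action as a subgroup of $G$, and in the prodiscrete case $K=\bigcap_\alpha K_\alpha$ is a closed, hence complete, subgroup when each $K_\alpha$ is complete, so $\pi$ is a discrete (resp.\ generalized) regular covering map in the sense of Definition \ref{guc}.
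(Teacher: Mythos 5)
Your proposal follows essentially the same route as the paper's proof. Parts 1--3 are the same universal-property and surjectivity bookkeeping (the paper outsources most of it to \cite{PQ} and to McCord's arguments, which you instead write out explicitly), Part 4 is the same freeness argument, and your Part 6 --- choosing $F$ with $F^{3}\subset E$, picking $K_{\alpha }\subset U_{F}(G)$, and showing that the basic entourage $\left( \phi ^{\alpha }\right) ^{-1}(\phi _{\alpha }(F))$ is contained in $\phi (E)$, then identifying the point preimages of $\phi $ with the orbits of $K$ via freeness and invoking Remark \ref{quotient} --- is word for word the paper's argument for bi-uniform continuity and for the quotient identification. The one place you fall short of the theorem as stated is Part 5: your proof of surjectivity of $\theta $ transports the nested cosets $g_{\alpha }K_{\alpha }$ to nested orbits through a basepoint and then uses freeness to collapse the resulting elements to a single $g$, but Part 5 does not assume the action is free; the paper sidesteps this by citing Proposition 45.3 of \cite{PQ}. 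As you note, the free case is the only one the paper actually uses (in the proof of Theorem \ref{corres} the relevant groups are subgroups of $\pi _{P}(X)$, which acts freely and has compact orbits), so this is a limitation in generality rather than an error in the applications.
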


\begin{proof}
That the systems comprise an isomorphic inverse system of quotients follows
from Proposition 47 in \cite{PQ}. Since for every $\beta \leq \alpha $, $%
\phi _{\alpha \beta }\circ \phi ^{\beta }=\phi _{\alpha }$ and $\theta
_{\alpha \beta }\circ \theta ^{\beta }=\theta _{\alpha }$, the mappings $%
\phi $ and $\theta $ are guaranteed by the universal property of the inverse
limit and defined by $\phi (x):=(K_{\alpha }x)$ and $\theta (g):=(gK_{\alpha
})$. We have for all $\alpha $, $\phi ^{\alpha }\circ \phi =\phi _{\alpha }$
and $\theta ^{\alpha }\circ \theta =\theta _{\alpha }$ and since $\phi
_{\alpha }$ and $\theta _{\alpha }$ are surjective, so are $\phi ^{\alpha }$
and $\theta ^{\alpha }$. That $\phi $ is uniformly continuous was proved in
Proposition 47.3 in \cite{PQ}. Part 2 is a well-known (in modern times)
consequence of the surjectivity of the maps $\phi ^{\alpha }$; and the proof
used by McCord (Theorem 5.8, \cite{Mc}) is now standard.

Likewise, that $\theta $ is a homomorphism with dense image in the inverse
limit topology is standard. The proof that the two topologies on $G$ are the
same may be found in the proof of Theorem 44, \cite{PQ} (see the proof of
this statement for the group $K_{\beta }$ in the penultimate paragraph).

For the fourth part, McCord's proof works here as well (this has only to do
with inverse limits of sets and abstract groups acting on them by
bijections), see also Proposition 45 in \cite{PQ}. The fifth statement
follows from Proposition 45.3 in \cite{PQ} (plus the well-known fact due to
Cantor that the intersection of a nested collection of non-empty compact
sets is non-empty).

The sixth part significantly improves Proposition 47.4 in \cite{PQ}; we
adapt the proof here. We start by showing that $\phi $ is bi-uniformly
continuous. Let $E$ be an entourage in $X$; we will show that an entourage
of the form $\left( \phi ^{\alpha }\right) ^{-1}(\phi _{\alpha }(F))$ is
contained in $\phi (E)$. Let $F$ be an entourage such that $F^{3}\subset E$
and let $\alpha $ be such that $K_{\alpha }\subset U_{F}(G)$. The elements
of $\left( \phi ^{\alpha }\right) ^{-1}(\phi _{\alpha }(F))$ are of the form 
$((K_{\beta }x,K_{\beta }y))$ with $(K_{\alpha }x,K_{\alpha }y)\in \phi
_{\alpha }(F)$. This means that for some $h,k\in K_{\alpha }$, $%
(h(x),k(y))\in F$. Since $h,k\in U_{F}(G)$ we also have $(h(x),x),(k(y),y)%
\in F$. Therefore $(x,y)\in F^{3}\subset E$. But then $((K_{\beta
}x,K_{\beta }y))=\phi ((x,y))\in \phi (E)$, completing the proof that $\phi $
is bi-uniformly continuous.

Next note that by definition, $\phi (x)=\phi (y)$ if and only if for all $%
\alpha $, $\phi _{\alpha }(x)=\phi _{\alpha }(y)$. This in turn is
equivalent to the fact that for some $g_{\alpha }\in K_{\alpha }$, $%
g_{\alpha }(x)=y$. Since $G$ acts freely this implies that $g_{\alpha
}=g_{\beta }$ for all $\alpha ,\beta $ and $g_{\alpha }\in K$ with $%
g_{\alpha }(x)=y$. Conversely, if $g\in K$ with $g(x)=y$ then for every $%
\alpha $, $g\in K_{\alpha }$ and $g(x)=y$. That is, the orbits of $K$ are
precisely the point pre-images of $\phi $ and since $\phi $ is bi-uniformly
continuous, the first statement in the sixth part is proved (see Remark \ref%
{quotient}). For the very last statement, the only question concerns the
completeness of $K$, which is automatic for discrete actions.
\end{proof}

\begin{definition}
\label{reso}Let $X,G,\mathcal{K}$ be as in the statement of Theorem \ref%
{snark}, and suppose that $G\in \mathcal{K}$ and all groups in $\mathcal{K}$
are complete. Then the resulting inverse system is called the $\mathcal{K}$%
-resolution of the quotient $\pi :X\rightarrow X/G$ and $\overline{X}$ is
called the $\mathcal{K}$-completion of $X$. We have the following special
cases:

\begin{enumerate}
\item Let $\mathcal{K}$ be the collection of all closed normal subgroups of $%
G$ of finite index. We refer to $\overline{X}$ as the profinite completion
of $X$ (with respect to $G$) and the $\mathcal{K}$-resolution as the
profinite resolution of $\pi $.

\item When $\pi :X\rightarrow X/G$ is a generalized regular covering map and 
$\mathcal{K}$ is the collection of $N_{E}$ for entourages $E$ in some
basis,in $X$ (which are open hence closed and therefore complete) then $\phi 
$ is uniform homeomorphism. We may identify the quotient map $\pi
:X\rightarrow X/G$ with the quotient map $\overline{\pi }:\overline{X}%
\rightarrow \overline{G}$ and we simply call the $\mathcal{K}$-resolution
the resolution of $\pi $, and each of the induced quotients is a discrete
cover (see Theorem 48 in \cite{PQ}).

\item When $\mathcal{K}$ is the collection of all closed normal subgroups of 
$G$ then we refer to the $\mathcal{K}$-resolution as the full resolution
(although we do not need this concept for this paper).
\end{enumerate}
\end{definition}

\begin{example}
Let $\mathbb{Z}$ act on $\mathbb{R}$ in the usual way with quotient the
circle $S^{1}$. Then the profinite resolution of the quotient $\pi :\mathbb{%
R\rightarrow R}/\mathbb{Z}=S^{1}$ is the quotient $\overline{\pi }:\Sigma
\rightarrow S^{1}\ $via the action of the profinite completion of $\mathbb{Z}
$ on the so-called universal solenoid $\Sigma $, which is the inverse limit
of all compact regular covers of $S^{1}$. This quotient is precisely the
\textquotedblleft K-universal cover\textquotedblright\ of $S^{1}$ considered
as a compact topological group in \cite{BPCG} and is also the compact
universal cover in the sense of the present paper. See also Example \ref%
{solenoid}.
\end{example}

\begin{proposition}
\label{uni!}Suppose that $f:Y\rightarrow X=Y/G$ is a generalized regular
covering map and $Z$ is a chain connected uniform space. If there are
(possibly not basepoint-preserving) uniformly continuous functions $%
g,g^{\prime }:Z\rightarrow Y$ such that $h:=f\circ g=f\circ g^{\prime }$
then $g=g^{\prime }$ (mod $G$). Moreover, $g^{\prime }=g$ if and only if for
any choice $\ast $ of basepoint in $Z$ such that $h$ is basepoint
preserving, $g(\ast )=g^{\prime }(\ast )$.
\end{proposition}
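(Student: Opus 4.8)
The plan is to reduce everything, via freeness of the action, to the production of a single deck transformation $\gamma\in G$ with $g'=\gamma\circ g$; I would establish this first for discrete covers by a local-constancy argument and then pass to the general prodiscrete case through the resolution. Since a prodiscrete action is free (Lemma 33, \cite{PQ}), the equation $f(g(z))=f(g'(z))$ forces, for each $z\in Z$, a \emph{unique} $\gamma_{z}\in G$ with $g'(z)=\gamma_{z}(g(z))$; the proposition then reduces to showing $z\mapsto\gamma_{z}$ is constant. For the final statement, once $g'=\gamma\circ g$ is known, a basepoint $\ast$ with $g(\ast)=g'(\ast)=\gamma(g(\ast))$ forces $\gamma=1$ by freeness, hence $g=g'$, while the converse is trivial.

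First I would treat the case where $f$ is a discrete cover. Fix a root entourage $R$ in $Y$ (these form a basis by Lemma \ref{full}); $R$ is invariant with $R^{2}$ inside the PD-domain $P$. Uniform continuity of $g,g'$ yields an entourage $D$ in $Z$ with $g(D)\subset R$ and $g'(D)\subset R$. The key claim is that $\gamma_{z}=\gamma_{z'}$ whenever $(z,z')\in D$. Writing $w:=g(z')$ and $\delta:=\gamma_{z}^{-1}\gamma_{z'}$, invariance of $R$ applied to $(g'(z),g'(z'))\in R$ gives $(g(z),\delta(w))\in R$; combined with $(w,g(z))\in R$ this produces $(w,\delta(w))\in R^{2}\subset P$, so the defining property of $P$ forces $w=\delta(w)$ and hence $\delta=1$ by freeness. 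Thus $z\mapsto\gamma_{z}$ is constant on $D$-balls, and since $Z$ is chain connected (hence $D$-chain connected) it is constant along any $D$-chain, hence constant on $Z$.

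For the general case I would invoke the resolution of $f$ (Definition \ref{reso}.2). With an invariant basis $\{E_{\alpha}\}$, set $N_{\alpha}:=N_{E_{\alpha}}(G)$, $Y_{\alpha}:=Y/N_{\alpha}$, $G_{\alpha}:=G/N_{\alpha}$, with quotient maps $\eta_{\alpha}:Y\rightarrow Y_{\alpha}$ and induced discrete covers $f_{\alpha}:Y_{\alpha}\rightarrow X$ satisfying $f_{\alpha}\circ\eta_{\alpha}=f$; here $Y\rightarrow\underleftarrow{\lim }Y_{\alpha}$ is a uniform homeomorphism and $G=\underleftarrow{\lim }G_{\alpha}$. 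Then $g_{\alpha}:=\eta_{\alpha}\circ g$ and $g'_{\alpha}:=\eta_{\alpha}\circ g'$ are uniformly continuous lifts of $h$ through the discrete cover $f_{\alpha}$, so the previous paragraph gives a unique $\bar{\gamma}_{\alpha}\in G_{\alpha}$ with $g'_{\alpha}=\bar{\gamma}_{\alpha}\circ g_{\alpha}$. Feeding the relations $\phi_{\alpha\beta}\circ g_{\beta}=g_{\alpha}$, $\phi_{\alpha\beta}\circ g'_{\beta}=g'_{\alpha}$, and $\phi_{\alpha\beta}\circ\bar{\gamma}_{\beta}=\theta_{\alpha\beta}(\bar{\gamma}_{\beta})\circ\phi_{\alpha\beta}$ (Theorem \ref{snark}.1) into the uniqueness of $\bar{\gamma}_{\alpha}$ yields $\theta_{\alpha\beta}(\bar{\gamma}_{\beta})=\bar{\gamma}_{\alpha}$ for $\alpha\leq\beta$, so $(\bar{\gamma}_{\alpha})$ defines an element $\gamma\in\underleftarrow{\lim }G_{\alpha}=G$. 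Since $\eta_{\alpha}\circ g'=\bar{\gamma}_{\alpha}\circ\eta_{\alpha}\circ g=\eta_{\alpha}\circ(\gamma\circ g)$ for all $\alpha$ and the $\eta_{\alpha}$ jointly separate points of $Y$, I conclude $g'=\gamma\circ g$, i.e. $g=g'$ (mod $G$).

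The hard part is conceptual rather than computational: a genuinely prodiscrete action has no single root entourage, so the one-scale computation of the second paragraph degrades to $\gamma_{z}^{-1}\gamma_{z'}\in N_{F}(G)$ for a fine $F$, which need not be trivial, and products of such elements along ever-finer chains cannot be controlled. Descending to the finite levels $G_{\alpha}$, where the action is truly discrete, restores a root entourage and hence genuine local constancy, after which the isomorphic inverse-system relations of Theorem \ref{snark} reassemble the level-wise $\bar{\gamma}_{\alpha}$ into the single $\gamma$. The only step requiring real care is the compatibility $\theta_{\alpha\beta}(\bar{\gamma}_{\beta})=\bar{\gamma}_{\alpha}$, but this is immediate from the discrete-case uniqueness together with the bonding relations.
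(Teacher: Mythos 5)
Your proof is correct, and its overall architecture matches the paper's: settle the discrete case first, then pass to a general prodiscrete $G$ through the resolution and recover the conclusion level by level. The discrete case itself is handled by a different mechanism, though. The paper normalizes basepoints so that $g(\ast)=g'(\ast)$, takes an $F$-chain $\alpha$ in $Z$ from $\ast$ to $z$, and observes that $g(\alpha)$ and $g'(\alpha)$ are both lifts of the $E$-chain $h(\alpha)$ starting at the same point, so uniqueness in the General Chain Lifting Lemma forces $g(z)=g'(z)$; the ``mod $G$'' statement is then obtained afterward by composing with the deck element matching the two basepoints. You instead exploit freeness to define the pointwise discrepancy $\gamma_{z}\in G$ with $g'(z)=\gamma_{z}(g(z))$ and show it is constant on $D$-balls directly from invariance of the root entourage $R$ and the inclusion $R^{2}\subset P$, then propagate along $D$-chains. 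Your route avoids the Chain Lifting Lemma entirely and produces the deck transformation $\gamma$ without any basepoint normalization; the price is a slightly heavier prodiscrete step, since you must check that the level-wise elements $\bar{\gamma}_{\alpha}$ form a thread via the compatibility relations of Theorem \ref{snark} and then use completeness of $G$ (i.e.\ $G=\underleftarrow{\lim}\,G_{\alpha}$, Corollary 34 of \cite{PQ}) to realize $\gamma$ in $G$, whereas the paper's basepoint-normalized version gets $g_{i}=g'_{i}$ outright at each level and concludes immediately. Both arguments use chain connectedness of $Z$ in the same essential way, and your local-constancy computation is a clean, correct substitute for the lifting argument.
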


\begin{proof}
We begin with the assumption that we have chosen a basepoint $\ast $ in $Z$
such that $g$ and $g^{\prime }$ are both basepoint preserving, and will show
that $g=g^{\prime }$. Suppose first that $f$ is a discrete covering map and
let $E$ be an evenly covered entourage in $X$ with respect to a root
entourage $R$ in $Y$. Let $F$ be an entourage in $Z$ such that $%
g(F),g^{\prime }(F)\subset E_{R}^{\ast }$. For any $z\in Z$, let $\alpha $
be be an $F$-chain from $\ast $ to $z$. Since $f(E_{R}^{\ast })=E$, $%
h(\alpha )$ is an $E$-chain in $X$, which therefore has a unique lift $%
\widetilde{\alpha }$ to $Y$ at $\ast $. Since $g(\alpha )$ and $g^{\prime
}(\alpha )$ are $E_{R}^{\ast }$-chains and hence lifts of $h(\alpha )$,
they, and their endpoints $g(z)$ and $g^{\prime }(z)$, must be equal.

Now suppose that $f$ is a generalized regular covering map; so the
resolution of $f$ is an inverse system spaces $\{Y_{i},f_{ij}\}$ such that
the induced quotients $f_{i}:Y_{i}=Y/K_{i}\rightarrow X=Y_{i}/G_{i}$ are
discrete covering maps. Let the basepoint $\ast $ in $Y_{i}$ be $\pi
_{i}(\ast )$, and note that the projections $\pi _{i}:Y\rightarrow X_{i}$
are surjective and uniformly continuous. We have basepoint-preserving
surjective compositions $g_{i}:=\pi _{i}\circ g:Z\rightarrow Y_{i}$ and $%
g_{i}^{\prime }:=\pi _{i}\circ g\prime :Z\rightarrow Y_{i}$, with $%
f_{i}\circ g_{i}=f=f_{i}\circ g_{i}^{\prime }$. From what we proved above, $%
g_{i}=g_{i}^{\prime }$ for all $i$. Denoting elements of $Y$ by $(y_{i})$
with $y_{i}\in Y_{i}$, we have that for all $i$ and $z\in Z$, $%
g(z)=(g_{i}(z))=(g_{i}^{\prime }(z))=g^{\prime }(z)$.
\end{proof}

Next, suppose that $g$ is basepoint preserving but $g^{\prime }(\ast )=\ast
^{\prime }\in f^{-1}(\ast )$. There is a unique $k\in G$ such that $k(\ast
)=\ast ^{\prime }$. Now $k\circ g^{\prime }(\ast )=\ast $ and by what we
proved above, $k\circ g^{\prime }=g$; that is, $g=g^{\prime }$ (mod $G$).
The last part of the proposition is now immediate.

\begin{corollary}
\label{eqeq}Let $f_{i}:Y_{i}\rightarrow X=Y/G_{i}$ be generalized regular
covering maps. Then the actions of $G_{1}$ and $G_{2}$ are equivalent if and
only if that there is a uniform homeomorphism $h:Y_{1}\rightarrow Y_{2}$
such that $f_{2}\circ h=f_{1}$ (i.e. $f_{1}$ and $f_{2}$ are equivalent in
the classical sense for regular covering maps).
\end{corollary}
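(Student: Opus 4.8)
The plan is to prove the two implications separately, treating ``$f_{2}\circ h=f_{1}$ implies equivalence of the actions'' as the substantive direction (the current notion is \emph{a priori} stronger, as Example \ref{strong} shows) and placing the entire weight of the argument on the uniqueness of lifts, Proposition \ref{uni!}. Throughout I assume, as the surrounding discussion intends, that $Y_{1}$ and $Y_{2}$ are chain connected, so that Proposition \ref{uni!} applies with $Z=Y_{1}$ or $Z=Y_{2}$; this hypothesis is genuinely needed, since Example \ref{strong} exhibits the failure of the equivalence precisely in the disconnected case.

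For the direction ``$f_{2}\circ h=f_{1}$ implies the actions are equivalent,'' I would start from the uniform homeomorphism $h$ and define, for each $g\in G_{1}$, the conjugate $h\circ g\circ h^{-1}:Y_{2}\to Y_{2}$. A one-line computation using $f_{2}\circ h=f_{1}$ and $f_{1}\circ g=f_{1}$ gives $f_{2}\circ(h\circ g\circ h^{-1})=f_{2}$, so this map lies over $f_{2}$. Since $f_{2}$ is a quotient map, $G_{2}$ is transitive on each fiber, so there is a unique $\theta(g)\in G_{2}$ agreeing with $h\circ g\circ h^{-1}$ at the basepoint $\ast$ of $Y_{2}$. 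Both $\theta(g)$ and $h\circ g\circ h^{-1}$ are uniformly continuous self-maps of $Y_{2}$ lying over $f_{2}$ and agreeing at $\ast$, so Proposition \ref{uni!} forces $h\circ g\circ h^{-1}=\theta(g)\in G_{2}$, that is $h\circ g=\theta(g)\circ h$. The relation $\theta(gg')=\theta(g)\theta(g')$ then follows by matching at $\ast$ and reapplying the uniqueness clause of Proposition \ref{uni!}, and the symmetric construction $k\mapsto h^{-1}\circ k\circ h$ (using $f_{1}\circ(h^{-1}\circ k\circ h)=f_{1}$ and chain connectedness of $Y_{1}$) produces a two-sided inverse. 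Hence $\theta:G_{1}\to G_{2}$ is an isomorphism and $(h,\theta)$ is an equivalence in the sense of Definition \ref{equivdef}.

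For the converse, given an equivalence $(h,\theta)$ I would first invoke Lemma \ref{induced} to obtain a uniform homeomorphism $\overline{h}:X\to X$ of the base with $f_{2}\circ h=\overline{h}\circ f_{1}$; the classical equivalence $f_{2}\circ h'=f_{1}$ is then exactly the statement that this induced base map can be trivialized. Since $h$ carries $G_{1}$-orbits to $G_{2}$-orbits, the map $\overline{h}^{-1}$ carries $f_{2}$ to $f_{1}\circ h^{-1}=\overline{h}^{-1}\circ f_{2}$, whose fibers are again the $G_{2}$-orbits; thus $\overline{h}^{-1}$ preserves the covering structure of $f_{2}$ and lifts to a uniform homeomorphism $\Psi:Y_{2}\to Y_{2}$ with $f_{2}\circ\Psi=\overline{h}^{-1}\circ f_{2}$. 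Setting $h':=\Psi\circ h$ gives $f_{2}\circ h'=\overline{h}^{-1}\circ f_{2}\circ h=\overline{h}^{-1}\circ\overline{h}\circ f_{1}=f_{1}$, the desired classical equivalence. (When the equivalence is taken over the fixed base, so that $\overline{h}=\mathrm{id}_{X}$, this step is vacuous and $h'=h$.) This direction is essentially routine once $\overline{h}$ is extracted, the only input beyond Lemma \ref{induced} being the lift of the structure-preserving base homeomorphism $\overline{h}^{-1}$.

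The main obstacle is the step in the first implication where a merely fiber-preserving uniform homeomorphism $h\circ g\circ h^{-1}$ must be promoted to an actual deck transformation in $G_{2}$: \emph{a priori} the group of self-homeomorphisms of $Y_{2}$ lying over $f_{2}$ can be strictly larger than $G_{2}$, and this is exactly what occurs in the disconnected Example \ref{strong}. The resolution is entirely the uniqueness of lifts (Proposition \ref{uni!}), whose hypothesis of chain connectedness of the domain is what excludes the extra fiber-permuting homeomorphisms. This is the precise sense in which the two notions of equivalence coincide only for generalized regular covering maps between connected spaces.
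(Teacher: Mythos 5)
Your ``if'' direction (from $f_{2}\circ h=f_{1}$ to equivalence of the actions) is correct and is essentially the argument the paper intends: the entire content is Proposition \ref{uni!}, applied after observing that $h\circ g\circ h^{-1}$ lies over $f_{2}$ and agrees at the basepoint with a unique element of $G_{2}$. (A marginally shorter route is to apply Proposition \ref{uni!} directly to the two maps $h\circ g$ and $h$ from $Y_{1}$ to $Y_{2}$, which satisfy $f_{2}\circ (h\circ g)=f_{1}\circ g=f_{1}=f_{2}\circ h$, yielding $h\circ g=\theta (g)\circ h$ in one step; your conjugation version is equivalent.) You are also right that chain connectedness of the $Y_{i}$ is the hypothesis doing the work and that Example \ref{strong} shows it cannot be dropped.

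The converse direction contains a genuine gap: the lift $\Psi :Y_{2}\rightarrow Y_{2}$ of $\overline{h}^{-1}$ with $f_{2}\circ \Psi =\overline{h}^{-1}\circ f_{2}$ does not exist in general. A homeomorphism of the base lifts to a given regular cover only if it preserves the corresponding normal subgroup data, and an equivalence of actions in the sense of Definition \ref{equivdef} gives no control over this. Concretely, let $X$ be the wedge of two circles $a,b$ and let $f_{1},f_{2}$ be the double covers corresponding to the two distinct index-two normal subgroups $\ker (a\mapsto 1,b\mapsto 0)$ and $\ker (a\mapsto 0,b\mapsto 1)$ of $\pi _{1}(X)$. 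The letter-swapping homeomorphism of $X$ lifts to an equivariant uniform homeomorphism $Y_{1}\rightarrow Y_{2}$, so the two $\mathbb{Z}/2$-actions are equivalent per Definition \ref{equivdef}; but the two covers are not equivalent over $X$, and the induced base map $\overline{h}$ (the swap) does not lift to $Y_{2}$ over $f_{2}$ because it carries one of these subgroups to the other. So no argument can produce your $\Psi $. The corollary is only true --- and is surely only intended --- when ``the actions are equivalent'' is read as an equivalence over $X$, i.e.\ when the map $\overline{h}$ of Lemma \ref{induced} is the identity under the identifications $Y_{i}/G_{i}=X$; under that reading your converse collapses to the parenthetical sentence you already wrote (``this step is vacuous and $h^{\prime }=h$''), and the paragraph constructing $\Psi $ should be deleted rather than repaired.
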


\begin{remark}
Theorem \ref{compo} below answers an open question from 2007 (\cite{BPUU},
p. 1751).
\end{remark}

\begin{theorem}
\label{compo}The composition of discrete covers (resp. generalized regular
covering maps) between chain connected metrizable uniform spaces is a
discrete cover (resp. generalized regular covering map). More precisely,
suppose that $g:Z\rightarrow Y=Z/H$ and $f:Y\rightarrow X=Y/G$ are discrete
covers (resp. generalized regular covering maps) and let $h:=f\circ g$. Then
there exists a complete group $K$ of uniform homeomorphisms of $Z$ that
contains $H$ as a normal subgroup such that $h:Z\rightarrow X=Z/K$ is a
discrete cover (resp. generalized regular covering map).
\end{theorem}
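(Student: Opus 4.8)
The plan is to reduce to the discrete case and then obtain the general statement by an inverse-limit argument. For the ``resp.'' clause one resolves $f$ and $g$ as inverse systems of discrete covers (Definition \ref{reso}), handles the composition at each finite stage, and reassembles via Theorem \ref{snark}; the group $K$ is then the inverse limit of the finite-stage groups, which is complete and prodiscrete and contains $H$ as a closed normal subgroup. So the heart of the matter is to show that a composition $h=f\circ g$ of two discrete covers $g:Z\to Y=Z/H$ and $f:Y\to X=Y/G$ is itself a discrete cover $Z\to X=Z/K$. Note that $H$ is forced to lie in $K$: every $\eta\in H$ satisfies $h\circ\eta=f\circ g\circ\eta=f\circ g=h$, so $H$ sits inside the deck group of $h$, and normality will be checked against whatever $K$ we produce.

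First I would factor $h$ through the space $X_E$, which is ``compactly simply connected at scale $E$''. Choose an entourage $E$ in $X$ small enough to be properly covered by $f$ and small enough that the lift of $E$ to $Y$ is in turn evenly covered by $g$. By Theorem \ref{quoteq} there is a uniformly continuous surjection $f_E:X_E\to Y$ with $\phi_E=f\circ f_E$, and $f_E:X_E\to Y=X_E/K_{f_E}$ is itself a discrete cover. The crucial structural input is that every $E^{\ast}$-loop in $X_E$ is $E^{\ast}$-null (the identification $(X_E)_{E^{\ast}}\cong X_E$ recalled in Section 2). Using this together with the General Chain Lifting Lemma \ref{cl} for $g$, I can lift $f_E$ through $g$: for $[\alpha]_E\in X_E$ take its canonical $E^{\ast}$-chain to the basepoint, push it forward by $f_E$, and lift the resulting chain through $g$ to $Z$. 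The $E^{\ast}$-simple connectivity guarantees that $E^{\ast}$-homotopic representatives produce $g$-lifts with the same endpoint, so the endpoint map $h_E:X_E\to Z$ is well defined, uniformly continuous and surjective, with $g\circ h_E=f_E$ and hence $\phi_E=f\circ f_E=f\circ g\circ h_E=h\circ h_E$.

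Now $g\circ h_E=f_E$ exhibits $g\le f_E$ as covers of $Y$, so Theorem \ref{eqone} applies to the pair $(g,f_E)$: it yields a surjective homomorphism $\theta_{h_E}$ from the deck group of $f_E$ onto $H$, whose kernel $K_{h_E}$ is a normal subgroup of that deck group, and identifies $h_E$ with the quotient $X_E\to X_E/K_{h_E}=Z$. At this stage $Z$ is realized as $X_E/K_{h_E}$ and $X$ as $X_E/\pi_E(X)$, with $K_{h_E}\subseteq K_{f_E}\subseteq\pi_E(X)$. If $K_{h_E}$ is normal in all of $\pi_E(X)$, then $\pi_E(X)/K_{h_E}$ acts isomorphically on $Z=X_E/K_{h_E}$, this action is discrete by Lemma \ref{addon}, and Proposition \ref{inducedp} identifies the induced quotient $Z\to Z/(\pi_E(X)/K_{h_E})$ with $h$; setting $K:=\pi_E(X)/K_{h_E}$ would finish the discrete case.

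The main obstacle is precisely this final normality. Theorem \ref{eqone} delivers only $K_{h_E}$ normal in $K_{f_E}$, and in the classical path setting normality is not transitive, so the analogue of $K_{h_E}$ normal in $\pi_E(X)$ can genuinely fail---this is exactly the phenomenon behind the warning that a composition of traditional regular covers need not be regular (Example \ref{covex}). The real content of the theorem is therefore to show that the uniform/chain framework supplies the missing normality: concretely, that each $\gamma\in\pi_E(X)$ descends to a uniform homeomorphism of $Z=X_E/K_{h_E}$, equivalently that the deck group of $h$ already acts transitively on the fibers of $h$. I expect this to be where chain-connectedness of $Z$, the even-covering estimates of Proposition \ref{evenly} and the stabilizer/Property ST analysis of Lemma \ref{cc} must be exploited, and where working with the uniform structure rather than merely the underlying topology is indispensable; I would anticipate needing to argue at a refined scale (or across an inverse system of scales) rather than at a single fixed $E$. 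Once transitivity is secured, the completeness of $K$ is automatic for discrete actions, and the generalized statement follows formally from the inverse-limit machinery of Theorem \ref{snark}.
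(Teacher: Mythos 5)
Your reduction to the discrete case, the reassembly by inverse limits via Theorem \ref{snark}, and the factorization of $h$ through $X_{E}$ via Theorems \ref{quoteq} and \ref{eqone} are all sound, and they do match the overall architecture of the paper's argument. But the proposal stops exactly where the theorem begins. You realize $Z$ as $X_{E}/K_{h_{E}}$ with $K_{h_{E}}$ normal only in $K_{f_{E}}$, observe (correctly) that everything now hinges on $K_{h_{E}}$ being normal in all of $\pi _{E}(X)$ --- equivalently, on a group of deck transformations of $h$ acting transitively on its fibers --- and then offer only a list of tools you ``expect'' to need. That step is not a technical loose end: every statement you do prove is available verbatim in the classical path-lifting setting, where the conclusion fails because normality is not transitive. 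So the proposal is a reduction of the theorem to an equivalent unproved statement, not a proof; this is a genuine gap, and it is the entire content of the result.

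For comparison, the paper attacks that step head-on rather than through $X_{E}$: for each $x\in h^{-1}(\ast )$ it defines a candidate deck transformation $k_{x}$ of $Z$ by pushing an $E_{R}^{\ast }$-chain $\beta $ from $\ast $ to $z$ down to $Y$, translating by the unique $j_{x}\in G$ with $j_{x}(\ast )=g(x)$, and lifting back to $Z$ at $x$; the whole burden then falls on showing that the endpoint is independent of the choice of $\beta $, which is your normality question in disguise. You should know that your instinct to flag this as the danger point is well founded. The paper's independence argument asserts that ``by uniqueness, the lift of $g(\overline{\widetilde{\beta }})$ at $z^{\prime \prime }$ must be $\overline{\widetilde{\beta ^{\prime }}}$,'' which would require $g(\beta )$ and $g(\beta ^{\prime })$ to be the same chain; without that, Proposition \ref{liftprop} only yields that the lift of $g(\overline{\widetilde{\beta }}\ast \overline{\tau })$ at $z^{\prime \prime }=m(z^{\prime })$ ends at $m(\ast )$, which gives no information about $m$. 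Indeed, if $X$ is a compact Poincar\'{e} space with $\pi _{1}(X)=D_{4}$ and one takes the covers corresponding to a chain $C\trianglelefteq B\trianglelefteq D_{4}$ with $C$ of order $2$ inside a Klein four subgroup $B$ and $C$ not normal in $D_{4}$, then $g$ and $f$ are discrete covers of chain connected continua for the unique compatible uniformities, yet $h$ corresponds to the non-normal subgroup $C$ and its deck group ($\cong B/C$) has orbits of size $2$ in the $4$-point fibers, so no group $K$ of uniform homeomorphisms of $Z$ can have the fibers of $h$ as its orbits. So the gap you left open does not appear to be closable at this level of generality; any repair needs an extra hypothesis, or must be confined to situations (as in the proof of Theorem \ref{dir}) where the relevant subgroups of $\pi _{E}(X)$ are known in advance to be normal.
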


\begin{proof}
Choose basepoints so that both maps are basepoint-preserving. To begin with
we assume that $f:Y\rightarrow X=Y/G$ is any quotient map via a free
isomorphic action and $g:Z\rightarrow Y=Z/H$ is a discrete cover. Let $E$ be
an invariant entourage in $Y$ that is evenly covered with respect to some
root entourage $R$ in $Z$. We note for once and for all that by Lemma \ref%
{full} and Theorem \ref{uniformize} the set of all such $E$ is a basis for
the uniform structure on $Y$ and the set of all $E_{R}^{\ast }$ is an
invariant basis for the uniform structure on $Z$. Suppose that $x\in
h^{-1}(\ast )$. Define a function $k_{x}:Z\rightarrow Z$ as follows. Since $%
x\in h^{-1}(\ast )$, $g(x)\in f^{-1}(\ast )$ and therefore there is a unique 
$j_{x}\in G$ such that $j_{x}(\ast )=g(x)$. For any $z\in Z$, let $\beta $
be an $E_{R}^{\ast }$-chain from $\ast $ to $z$. Define $k_{x}(z)$ to be the
endpoint of the unique lift $\widetilde{\beta }$ of $j_{x}(g(\beta ))$ to $Z$
at $x$. We will show that the set $K$ of all such $k_{x}$ is a group of
well-defined uniform homeomorphisms acting freely and isomorphically on $Z$,
which contains $H$ as a normal subgroup, and such that $h$ is the quotient
map $h:Z\rightarrow X=Z/K$. This will involve a series of claims, beginning
with the claim that $k_{x}$ is well-defined.

Suppose that $\beta ^{\prime }$ is another $E_{R}^{\ast }$-chain from $\ast $
to $z$. Then $g(\beta )$ and $g(\beta ^{\prime })$ end at the same point,
and therefore $k_{x}(g(\beta ))$ and $k_{x}(g(\beta ^{\prime }))$ end at the
same point $y$. Let $z^{\prime },z^{\prime \prime }$ be the endpoints of the
lifts $\widetilde{\beta }$ and $\widetilde{\beta ^{\prime }}$ of $g(\beta )$
and $g(\beta ^{\prime })$, respectively, to $X$ at $x$. Since $g(z^{\prime
})=y=g(z^{\prime \prime })$, there is some $m\in H$ such that $m(z^{\prime
})=z^{\prime \prime }$. By Proposition \ref{liftprop}, if $\tau $ is any $%
E_{R}^{\ast }$-chain from $\ast $ to $x$, then $m(\ast )$ is the endpoint of
the lift $\kappa $ of $g\left( \overline{\widetilde{\beta }}\ast \overline{%
\tau }\right) $ at $z^{\prime \prime }$. By uniqueness, the lift of $g\left( 
\overline{\widetilde{\beta }}\right) $ at $z^{\prime \prime }$ must be $%
\overline{\widetilde{\beta ^{\prime }}}$, which ends at $x$. By uniqueness
again, we have that $\kappa =\overline{\widetilde{\beta ^{\prime }}}\ast 
\overline{\tau }$, which ends at $\ast $. That is, $m(\ast )=\ast $, which
implies $m=1$ and $z^{\prime }=z^{\prime \prime }$. It is immediate from the
definition of $k_{x}$ that $h=h\circ k_{x}$, and we also have the following
compatibility condition for any $x\in h^{-1}(\ast )$ and $z\in Z$: 
\begin{equation}
g(k_{x}(z))=j_{x}(g(z))\text{.}  \label{compat}
\end{equation}%
In fact, both sides of this equation are readily seen to be the endpoint of $%
j_{x}(g(\beta ))$.

Letting $x^{\prime }$ be the endpoint of the lift of $g(\overline{\tau })$
to $Z$ at $\ast $, we see by uniqueness of lifts that $k_{x^{\prime }}$ is
an inverse function to $k_{x}$, proving that $k_{x}$ is a bijection. We next
claim that for any $x_{1}\in h^{-1}(\ast )$, $k_{x_{1}}\circ
k_{x}=k_{k_{x_{1}}(x)}$, showing that $K$ is a group. With $z$ and $\beta $
as above, $\tau \ast \widetilde{\beta }$ is an $E_{R}^{\ast }$-chain from $%
\ast $ to $k_{x}(z)$. Therefore $k_{x_{1}}(k_{x}(z))$ is the endpoint of the
lift of 
\begin{equation*}
j_{x_{1}}(g(\tau \ast \widetilde{\beta }))=j_{x_{1}}(g(\tau ))\ast
j_{x_{1}}(j_{x}(g(\beta )))
\end{equation*}%
to $Z$ at $x_{1}$. This in turn is the endpoint of the unique lift of $%
j_{x_{1}}(j_{x}(g(\beta )))=j_{j_{x_{1}}(x)}(g(\beta ))$ ($G$ acts freely)
to $Z$ at $k_{x_{1}}(x)$. On the other hand, $h(k_{x_{1}}(x))=h(x)=\ast $,
and therefore $k_{k_{x_{1}}(x)}$ is defined. And by definition, $%
k_{k_{x_{1}}(x)}(z)$ is also the endpoint of the lift of $%
j_{k_{x_{1}}(x)}(g(\beta ))$ to $Z$ at $k_{x_{1}}(x)$.

Note that by Proposition \ref{liftprop}, $H$ is characterized as those $m\in
K$ such $g(m(\ast ))=\ast $. By Proposition \ref{evenly} the restriction $%
g_{z}$ of $g$ to any $B(z,E_{R}^{\ast })$ is a bijection onto $B(g(z),E)$.
We claim that the restriction $k_{x}^{z}$ of $k_{x}$ to any $B(z,E_{R}^{\ast
})$ satisfies the equation 
\begin{equation}
k_{x}^{z}=g_{k_{x}(z)}^{-1}\circ j_{x}\circ g_{z}\text{.}  \label{essential}
\end{equation}%
Since $E$ is invariant with respect to $j_{x}$ this will mean that the
restriction of $k_{x}$ to any $E_{R}^{\ast }$-ball is a bijection onto an $%
E_{R}^{\ast }$-ball, showing that $E_{R}^{\ast }$ is invariant with respect
to $k_{x}$. In fact, if $w\in B(z,E_{R}^{\ast })$ then since $k_{x}$ is
well-defined we may use any $E_{R}^{\ast }$-chain $\beta $ from $\ast $ to $%
z $, concatenated with $(z,w)$, as our $E_{R}^{\ast }$-chain from $\ast $ to 
$w $. Now the fact that $k_{x}(w)=g_{k_{x}(z)}^{-1}\circ j_{x}\circ g_{z}(w)$
is immediate from the definition of $k_{x}$.

Now let $k_{x}\in K$ and $m\in H$, and consider $k_{x}^{-1}(m(k_{x}(\ast
)))=k_{x}^{-1}(m(x))$. Again let $\tau $ be any $E_{R}^{\ast }$-chain from $%
\ast $ to $x$ and let $\eta $ be an $E_{R}^{\ast }$-chain from $x$ to $m(x)$%
. Let $x^{\prime }:=k_{x}^{-1}(\ast )$; so for some $g_{x^{\prime }}\in G$, $%
g_{x^{\prime }}(\ast )=x^{\prime }$. By definition, $k_{x}^{-1}(m(x))$ is
the endpoint of the lift of $g_{x^{\prime }}(g(\tau \ast \eta ))$ at $\ast $%
. But note that since $m\in H$, $g(\eta )$ is a loop, and therefore so is $%
g_{x}^{\prime }(g(\eta ))$. Since the lift of $g_{x^{\prime }}(g(\tau ))$ to 
$Z$ at $x^{\prime }$ ends at $\ast $, the lift of $g_{x}^{\prime }(g(\eta ))$
to $Z$ at $\ast $ ends at $k_{x}^{-1}(m(x))=k_{x}^{-1}(m(k_{x}(\ast )))$.
But since $g_{x}^{\prime }(g(\eta ))$ is a loop, this shows that $%
k_{x}^{-1}(m(k_{x}(\ast )))=\ast $ and therefore $k_{x}^{-1}mk\in H$,
completing the proof that $H$ is normal in $K$.

Next note that since each of $f$ and $g$ is bi-uniformly continuous, so is $%
h $. We next show that the point preimages of $h$ are the orbits of $K$,
showing that $h:Z\rightarrow X=Z/K$ is a quotient map. Since $h\circ g_{x}=h$
for every $x\in h^{-1}(\ast )$, the orbits of $K$ are contained in point
pre-images. Conversely, suppose that $h(z)=h(w)$ and let $\beta $ be an $%
E_{R}^{\ast }$-chain from $\ast $ to $z$. Since $h(z)=h(w)$ there is some $%
j\in G$ such that $j(g(z))=g(w)$; let $x$ be the endpoint of the unique lift
of $j(g(\overline{\beta }))$ to $Z$ at $w$. By definition of $g_{x}$, $%
g_{x}(z)$ is precisely the endpoint of the unique lift of $j(g(\beta _{z}))$
to $Z$ at $x$. But that lift is precisely $w$, i.e. $g_{x}(z)=w$, completing
the proof that $h$ is a quotient map.

Now define $\theta :K\rightarrow G$ by $\theta (k_{x})=j_{x}$. The
compatibility Equation (\ref{compat}) now becomes 
\begin{equation*}
\theta (k_{x})(g(z))=g(k_{x}(z))
\end{equation*}%
for every $k_{x}\in K$ and $z\in Z$. To see that $\theta $ is a surjective
homomorphism with kernel $H$, let $k_{1},k_{2}\in K$ and $z\in Z$. Applying
compatibility a couple of times we have: 
\begin{equation*}
\theta (k_{x_{1}}k_{x_{2}})(g(z))=g(k_{x_{1}}(k_{x_{2}}(z)))=\theta
(k_{x_{1}})(g(k_{x_{2}}(z))=\theta _{k_{1}}(\theta _{k_{2}}(g(z)))
\end{equation*}%
If $j\in G$, let $x\in g^{-1}(j(\ast ))\subset h^{-1}(\ast )$. Then $%
g(k_{x}(\ast ))=g(x)=j(\ast )$ and therefore by definition $\theta (k_{x})=j$%
. Similarly, if $k_{x}\in \ker \theta $ then $g(k_{x}(\ast ))=\ast $ and
therefore $k_{x}\in H$.

At this point the only fact that we have used about $G$ is that it acts
freely and isomorphically on $Y$. For the next steps we will assume that $H$
acts discretely and impose additional conditions on $E$ for two cases: $G$
acts discretely and $G$ acts prodiscretely. Suppose that $G$ acts
discretely. Then we may take the entourage $E$ from above to have the
property that $N_{E}(G)=1$. Suppose that $(z,k(z))\in E_{R}^{\ast }$ for
some $z\in Z$ and $k\in K$. Then $E$ contains $(g(z),g(k(z)))=(g(z),\theta
(k)(g(z)))$ and therefore $\theta (k)=1$. This means that $k\in H$. But now
since $(z,k(z))\in E_{R}^{\ast }$, $k=1$. That is, $N_{E_{R}^{\ast }}(K)=1$.

Now suppose that $G$ acts prodiscretely (and still $H$ acts discretely)
choose an entourage $F\subset E$ such that $N_{F}(G)\subset U_{E}(G)$. We
will prove that 
\begin{equation}
N_{F_{R}^{\ast }}(K)\subset U_{E_{R}^{\ast }}(K)\text{,}  \label{good}
\end{equation}%
proving that in this case $K$ acts prodiscretely. Suppose that $k_{x}\in
N_{F_{R}^{\ast }}(K)$. That is, $k_{x}=s_{1}\cdot \cdot \cdot s_{n}$, where $%
s_{i}\in S_{F_{R}^{\ast }}(K)$, which in turn means $(s_{i}(z_{i}),z_{i})\in
F_{R}^{\ast }$ for some $z_{i}\in Z$. Therefore $(\theta
(s_{i})(g(z_{i})),g(z_{i}))=(g(s_{i}(z_{i})),g(z_{i}))\in F$, which implies
that each $\theta (s_{i})\in S_{F}(G)$ and therefore $\theta (k_{x})\in
N_{E}(G)\subset U_{E}(G)$. Put another way,%
\begin{equation}
\theta (N_{F_{R}^{\ast }}(K))\subset U_{E}(G)\text{.}  \label{uch}
\end{equation}%
Now suppose that $z\in Z$. We have $(g(z),j_{x}(g(z)))=(g(z),g(k_{x}(z)))\in
E$. Since the restriction of $g$ to $B(z,E_{R}^{\ast })$ is a bijection onto 
$B(g(z),E)$, there is some $w\in B(z,E_{R}^{\ast })$ such that $%
g(w)=j_{x}(g(z))$. From Equation (\ref{essential}), we have: 
\begin{equation*}
k_{x}(z)=g_{k_{x}(w)}^{-1}(j_{x}(g(z)))=g_{k_{x}(w)}^{-1}g(w)=w
\end{equation*}%
That is, $(z,k_{x}(z))\in E_{R}^{\ast }$. Since $z$ was arbitrary, $k_{x}\in
U_{E_{R}^{\ast }}(K)$.

We will now show that $K$ is complete. Since the spaces, hence the groups,
are all metrizable, we may use Cauchy sequences to verify completeness.
Suppose that $\{k_{x_{i}}\}$ is a Cauchy sequence in $K$; then for every $E$%
, $k_{x_{i}}^{-1}k_{x_{j}}\in U_{E_{R}^{\ast }}(K)$ for all large $i,j$.
Applying this to $\ast $, we have that $(k_{x_{i}}^{-1}k_{x_{j}}(\ast ),\ast
)\in E_{R}^{\ast }$ for all large $i,j$. Since $E_{R}^{\ast }$ is invariant,
this means that $(x_{i},x_{j})\in E_{R}^{\ast }$ for all large $i,j$. That
is, $\{x_{i}\}$ is itself Cauchy. Therefore $\{g(x_{i})\}$ is a Cauchy
sequence in the (complete) orbit $G\ast $ of $\ast $. That is, $%
g(x_{i})\rightarrow y\in Y$. Since $g$ is bi-uniformly continuous, this in
turn implies that for every open set $U$ containing the orbit $Hz$ of $z\in
g^{-1}(y)$, there is some $x_{i}\in U$. Since $Hz$ is uniformly discrete, we
may take arbitrarily small open sets containing only single points of $Hz$.
Since $\{x_{i}\}$ is Cauchy, one open set about some particular $w\in Hz$
must contain all but finitely many points of $\{x_{i}\}$. In other words, $%
x_{i}\rightarrow w$. We claim that $k_{x_{i}}\rightarrow k_{w}$, which will
complete the proof that $K$ is complete. In fact, for any sufficiently small
entourage $F$, $(x_{i},w)\in F_{R}^{\ast }$. But this means that $\left(
k_{x_{i}},k_{w}\right) \in N_{F_{R}^{\ast }}(K)$, which is contained in $%
U_{E_{R}^{\ast }}(K)$ by Inclusion (\ref{good}).

For the final case, in which both $G$ and $H$ are complete and act
prodiscretely, choose a countable nested basis $E_{i}$ for $Y$ and take the
resulting resolution of $g$ (see Definition \ref{reso}), which consists of
compatible inverse systems $\{Z_{i},f_{ij}\}_{i,j\in \mathbb{N}}$ of uniform
spaces and $\{G_{i},\theta _{ij}\}_{i,j\in \mathbb{N}}$ of groups $G_{i}$ of
uniform homeomorphisms acting discretely on $Z_{i}$. Let $%
g_{i}:Z_{i}\rightarrow Y=Z_{i}/G_{i}$ be the quotient map, which is a
discrete covering map, and $h_{i}:=f\circ g_{i}$, which by the special case
we already proved, is equivalent to the quotient map $h_{i}:Z_{i}\rightarrow
X=Z_{i}/K_{i}$ via some complete group $K_{i}$ that acts prodiscretely on $%
Z_{i}$ and contains $H_{i}$ as a normal subgroup. That is, we have an
inverse system of quotients via isomorphic actions in the sense of \cite{PQ}%
, and by Proposition 41 of \cite{PQ}, the resulting inverse limit action of $%
K=\underleftarrow{\lim }K_{i}$ on $Z=Z_{i}$ is a generalized regular
covering map. Since $H_{i}$ is a normal subgroup of $K_{i}$, $H$ is a normal
subgroup of $K$.
\end{proof}

\begin{example}
\label{covex}As is well-known, there are traditional regular covering maps
between path and locally path connected spaces whose composition is not even
a covering map; there is a nice illustration of this in \cite{Br}. But
according to Theorem \ref{uniformize} we can \textquotedblleft
uniformize\textquotedblright\ each of these covering maps as discrete
covering maps! This does not contradict Theorem \ref{compo} because as can
be readily discerned from the picture in \cite{Br}, any attempt to
uniformize these two maps creates non-equivalent uniform structures on the
middle space--i.e. the spaces involved can never be simultaneously
uniformized in a way that the result is a composition of discrete covers.
\end{example}

\section{Properties of the Compact Universal Cover}

\begin{proof}[Proof of of Theorem \protect\ref{dir}]
Starting with a basepoint $\ast $ in $X_{0}:=X$ we may choose basepoints $%
\ast $ in each $X_{i}$ so that $f_{i}$ is basepoint-preserving. When $%
f_{i}\leq f_{j}$, by definition this means there is a (uniformly) continuous
surjection $f_{ij}:X_{j}\rightarrow X_{i}$ such that $f_{j}=f_{i}\circ
f_{ij} $. \textit{A priori}, $f_{ij}$ may not be basepoint-preserving, but $%
f_{ij}(\ast )=\ast ^{\prime }$ with $\ast ^{\prime }\in f_{i}^{-1}(\ast )$.
Therefore there is some $g\in G_{i}$ such that $g(\ast ^{\prime })=\ast $,
and replacing $f_{ij}$ by $g\circ f_{ij}$ we may assume that $f_{ij}$ is
basepoint-preserving. By Theorem \ref{eqone}, $f_{ij}$ is a discrete
covering map, and by Proposition \ref{uni!} is the unique
basepoint-preserving such discrete covering map. Suppose that $f_{i}\leq
f_{j}$ and $f_{j}\leq f_{i}$. Then $\iota _{j}:=f_{ij}\circ
f_{ji}:X_{j}\rightarrow X_{j}$ is basepoint-preserving such that $f_{j}\circ
\iota _{j}=f_{j}$. But the identity map on $X_{j}$ has the same properties
and therefore by Proposition \ref{uni!} $\iota _{j}$ is the identity. The
same is true for $i$, and therefore $f_{ij}$ and $f_{ji}$ are inverses. In
particular, $f_{i}$ and $f_{j}$ are equivalent, so $\leq $ is a partial
order.

To see that the set is directed, let $E$ be evenly covered by both $f_{i}$
and $f_{j}$. According to Theorem \ref{quoteq}, for $m=i,j$ $f_{m}$ is
equivalent to the induced quotient map $\pi :X_{E}/K_{m}\rightarrow
(X_{E}/K_{m})/(\pi _{E}(X)/K_{m})$ for normal subgroups $K_{i},K_{j}$ of $%
\pi _{E}(X)$ of finite index. Let $K:=K_{i}\cap K_{j}$ and consider the
induced quotient $f:X_{E}/K\rightarrow (X_{E}/K)/(\pi _{E}(X)/K)=X$, which
by Lemma \ref{addon} is a discrete covering map. We claim that $Y:=X_{E}/K$
is compact. In fact, let $\{y_{i}\}$ be a Cauchy sequence in $Y$; so $%
f(y_{i})\rightarrow x$ for some $x\in X$. Taking a tail of the sequence if
necessary we can assume that $\{f(y_{i})\}$ lies in an open set $U$
containing $x$ that is evenly covered by sets $U_{i}$ in $Y$. Fixing one $%
U_{k}$ and taking preimages in $U_{k}$ we have a convergent subsequence $%
y_{i}^{\prime }\rightarrow x^{\prime }$ for some $x^{\prime }\in f^{-1}(x)$,
with $f(y_{i}^{\prime })=f(y_{i})$. For each $i$ there is some $k_{i}\in \pi
_{E}(X)/K$ such that $k_{i}(y_{i}^{\prime })=y_{i}$. Since $\pi _{E}(X)/K$
is finite, by taking a subsequence if necessary we can assume that $%
y_{i}=k(y_{i}^{\prime })$ for all $i$ and some fixed $k\in G$. But $k$ is a
homeomorphism and since $\{y_{i}^{\prime }\}$ is convergent, so is $%
\{y_{i}\} $.

By Proposition \ref{cpt}, the restriction $f_{C}$ of $f$ to the component $C$
of the basepoint in $X_{E}/K$ is a cover of $X$ by a continuum. Since $%
K\subset K_{m}$ for $m=i,j$, we have induced quotients $g_{m}:Y=X_{E}/K%
\rightarrow X_{m}=(X_{E}/K)/(K_{m}/K)$. Then the restriction $g_{m}^{\prime
} $ of $g_{m}$ to $C$ is a uniformly continuous surjection such that $%
f_{C}=f_{m}\circ g_{m}^{\prime }$. That is, $f_{m}\leq f_{C}$ for $m=i,j$.

By uniqueness, if $f_{i}\leq f_{j}\leq f_{k}$ then $f_{ik}=f_{ij}\circ
f_{jk} $ and therefore we have an inverse system with bonding maps $f_{ij}$.
From Theorem \ref{factor}, the restrictions of the induced quotients $\pi
_{E}(X)/K\rightarrow X$ from the prior paragraph form a countable cofinal
subsystem. It is a classical result (and easy to prove by iteration) that in
this situation the fact that the bonding maps are surjective implies that
the projection maps from the inverse limit are also surjective. The inverse
limit of compact (Hausdorff), connected spaces with surjective bonding maps
is compact and connected. This is mentioned for example in Section 2 of \cite%
{Ch} (in that paper \textquotedblleft continua\textquotedblright\ are not
assumed to be metrizable but only Hausdorff). Moreover, since the spaces in
the (countable!) inverse system are metrizable, so is the inverse limit $%
\widehat{X}$. That is, $\widehat{X}$ is a continuum.

For the homomorphisms $\theta _{ij}:G_{j}\rightarrow G_{i}$ we take the
homomorphism from Theorem \ref{eqone}. By definition, $\theta _{ik}=\theta
_{ij}\circ \theta _{jk}$ and compatibility is simply Theorem \ref{eqone}.1c.
From compatibility and the fact that each $f_{ij}$ is a discrete cover it
follows from Theorem 44 in \cite{PQ} that the projection $\widehat{\phi }:%
\widehat{X}\rightarrow X$ is a generalized universal cover with deck group $%
\pi _{P}(X)=\underleftarrow{\lim }G_{i}$. Proposition \ref{cpt} implies that
every $G_{i}$ is finite, so by definition $\pi _{P}(X)$ is profinite.
\end{proof}

\begin{proof}[Proof of Theorem \protect\ref{corres}]
If $K$ is a closed normal subgroup of $\pi _{P}(X)$ of finite index then
since $\pi _{P}(X)$ is complete, $K$ is complete. By Proposition \ref{haus}, 
$\widehat{X}/K$ is Hausdorff, hence a continuum that covers $X$ via the
induced quotient.

For the converse, suppose first that $f$ is a discrete cover, so $Y=X_{j}$
for some $j$ and $f=f_{j}:X_{j}\rightarrow X=X_{j}/G_{j}$. Define $%
K_{f_{j}}:=\ker \theta ^{j}$, where $\theta ^{j}:\pi _{P}(X)=\underleftarrow{%
\lim }G_{i}\rightarrow G_{j}$ is the projection. We claim that the
projection $\widehat{\phi }^{j}:\widehat{X}\rightarrow X_{j}$ is equivalent
to the quotient $\widehat{X}\rightarrow \widehat{X}/K_{f}$. Since $\widehat{%
\phi }^{j}$ is surjective (see the proof of Theorem \ref{dir}) hence
bi-uniformly continuous, we need only show that $\widehat{\phi }^{j}(x)=%
\widehat{\phi }^{j}(y)$ if and only if for some $g\in K_{f_{j}}$, $g(x)=y$.
If $\widehat{\phi }^{j}(x)=\widehat{\phi }^{j}(y)$ then $\widehat{\phi }(x)=%
\widehat{\phi }(y)$ and therefore there is some $g\in \pi _{P}(X)$ such that 
$g(x)=y$. By compatibility, 
\begin{equation*}
\theta ^{j}(g)(\widehat{\phi }^{j}(x))=\widehat{\phi }^{j}(g(x))=\widehat{%
\phi }^{j}(y)=\widehat{\phi }^{j}(x)
\end{equation*}%
and since the action is free, $g\in \ker \theta ^{j}=K_{f_{j}}$. Conversely,
suppose there is some $g\in K_{f_{j}}$ such that $g(x)=y$. Then 
\begin{equation*}
\widehat{\phi }^{j}(y)=\widehat{\phi }^{j}(g(x))=\theta ^{j}(g)(\widehat{%
\phi }^{j}(x))=\widehat{\phi }^{j}(x)\text{.}
\end{equation*}%
We may now invoke Proposition \ref{inducedp} to see that $f_{j}$ is
equivalent to the induced quotient $\pi :\widehat{X}/K_{f_{j}}\rightarrow
X=\left( \widehat{X}/K_{f_{j}}\right) /(\pi _{P}(X)/K_{f_{j}})$, and by
Proposition \ref{cpt}, $G_{j}=\pi _{P}(X)/K_{f_{j}}$ is finite, and
therefore $K_{f_{j}}$ has finite index.

For $f$ an arbitrary generalized regular covering map we will utilize the
resolution of $f$ (see Definition \ref{reso}). Since the covers in the
resolution are discrete, they constitute a directed subsystem of the inverse
system of all coverings of $X$ by compacta indexed by some set $J$, $%
\{X_{j},f_{ij}\}_{j\in J}$ where by the previous part, for each $j$ we have
the projection $f^{j}:\widehat{X}\rightarrow \widehat{X}/K_{f_{j}}=X_{j}$
and $f_{j}:X_{j}\rightarrow X$ is equivalent to the quotient 
\begin{equation*}
\pi _{j}:X_{j}=\widehat{X}/K_{f_{j}}\rightarrow X=\left( \widehat{X}%
/K_{f_{j}}\right) /\left( \pi _{P}(X)/K_{f_{j}}\right) =\widehat{X}/\pi
_{P}(X))\text{.}
\end{equation*}%
Note that each $K_{j}$, being a subgroup of $\pi _{P}(X)$, acts freely and
has compact orbits. Therefore the proof is complete by Theorem \ref{snark}.
\end{proof}

\begin{proof}[Proof of Theorem \protect\ref{main}]
For existence we may simply take the quotient map $f_{L}:\widehat{X}%
\rightarrow \widehat{X}/K_{f}$ from the Galois Correspondence. Uniqueness
(mod $G$) follows from Proposition \ref{uni!}.
\end{proof}

\begin{proof}[Proof of Theorem \protect\ref{unique}]
If $\widehat{X}$ were not compactly simply connected then there would be
some non-trivial discrete cover $f:Y\rightarrow \widehat{X}$ by a continuum.
By Theorem \ref{compo}, $h:=\widehat{\phi }\circ f$ is a generalized
universal cover of $X$. Now Theorem \ref{main} implies that there is a
unique (mod $\pi _{P}(X)$) generalized covering map $h_{L}:\widehat{X}%
\rightarrow Y$ such that $\widehat{\phi }=h\circ h_{L}$. This implies that $%
g:=f\circ h_{L}:\widehat{X}\rightarrow \widehat{X}$ satisfies 
\begin{equation*}
\widehat{\phi }\circ g=\widehat{\phi }\circ f\circ h_{L}=h\circ h_{L}=%
\widehat{\phi }\text{.}
\end{equation*}%
But this makes $g$ a lift of $\widehat{\phi }$ itself. Since the identity
map is also a lift of $\widehat{\phi }$, by uniqueness of Theorem \ref{main}%
, $g$ is equal (mod $\pi _{P}(X)$) to the identity. Since $h_{L}$ is onto,
this means that $f$ is 1-1, a contradiction.

Now suppose that $f:Y\rightarrow X$ is a generalized regular covering map
with $Y$ a compactly simply connected continuum. By the Universal Property
there is a generalized regular covering map $f_{L}:\widehat{X}\rightarrow Y$
such that $f\circ f_{L}=\widehat{\phi }$. But if $f_{L}$ were not a uniform
homeomorphism then $Y$ would have a nontrivial discrete cover by a
continuum, meaning that $\pi _{P}(Y)\neq 1$, a contradiction.
\end{proof}

\begin{proof}[Proof of Corollary \protect\ref{ecor}]
By Theorem \ref{compo}, $g:=f\circ \widehat{\phi _{Y}}:\widehat{Y}%
\rightarrow X$ is a generalized regular covering map and therefore by the
Universal Property there is a lift $g_{L}:\widehat{X}\rightarrow \widehat{Y}$
such that $g\circ g_{L}=\widehat{\phi _{X}}$. On the one hand, the Universal
Property gives us a unique (mod $G$) lift $f_{L}:\widehat{X}\rightarrow Y$
such that $f\circ f_{L}=\widehat{\phi _{X}}$. Applying it again, there is a
generalized regular covering map $(f_{L})_{L}:\widehat{Y}\rightarrow 
\widehat{X}$ such that $f_{L}\circ (f_{L})_{L}=\widehat{\phi _{Y}}$. Now $%
(f_{L})_{L}\circ g_{L}:\widehat{X}\rightarrow \widehat{X}$ is a generalized
regular covering map such that 
\begin{equation*}
\widehat{\phi _{X}}\circ ((f_{L})_{L}\circ g_{L})=f\circ f_{L}\circ
(f_{L})_{L}\circ g_{L}=f\circ \widehat{\phi _{Y}}\circ g_{L}=g\circ g_{L}=%
\widehat{\phi _{X}}\text{.}
\end{equation*}%
By uniqueness again this means that $(f_{L})_{L}\circ g_{L}$ is (mod $\pi
_{P}(X)$) equal to the identity. Since $(f_{L})_{L}$ is onto, $g_{L}$ must
be 1-1 and hence must be a uniform homeomorphism.
\end{proof}

\begin{example}
\label{solenoid}As a final example we will consider what happens with the $2$%
-adic solenoid $X=\Sigma _{2}$, which is the inverse limit of the $2^{n}$
covers of the circle. We already saw in \cite{BPCG} that $\widehat{X}$
(viewed as a compact group) is the \textquotedblleft universal
solenoid\textquotedblright , which is the inverse limit of all covers of the
circle by itself, which is precisely how $\widehat{X}$ is defined in the
present paper. In \cite{BPUU} we saw that $\widetilde{X}=\mathbb{R}$ using
the map induced by the generalized universal covering of the circle by $X$.
But it is perhaps useful to examine the two inverse systems themselves and
their interaction, beginning with the fundamental inverse system. As is well
known, $X$ is locally a product between a compact real interval and the
Cantor set $C$. In particular, we may take for a basis of the unique uniform
structure on $X$ all entourages $E$ such that every $B(x,E)$ is homeomorphic
to $(-\varepsilon ,\varepsilon )\times V$, where $V$ is an open set in $C$.

Now the basepoint $\ast $ lies on some path component, which is the image of
a continuous 1-1 map $p:\mathbb{R\rightarrow }X$ with $p(0)=\ast $. As $t$
increases, the local product structure requires that $p(t)$ leave the local
product neighborhood, but compactness forces it to return as another path
component of the local product around $\ast $. For some $t_{1}^{\ast }$, $%
p(t_{1}^{\ast })$ has the same second coordinate as $\ast $. Now suppose $%
(\ast ,p(t_{1}^{\ast }))\in E$. Then $E$ doesn't \textquotedblleft see the
gap\textquotedblright\ between $\ast $ and $p(t_{1}^{\ast })$ and there is
an $E$-loop $\lambda $ at $\ast $ that starts as a subdivision of the
segment $p_{1}$ of $p$ restricted to $[0,t_{1}^{\ast }]$ and then jumps from 
$p(t_{1}^{\ast })$ to $\ast $. If $E$ is small enough in the
\textquotedblleft real direction\textquotedblright\ then $\lambda $ cannot
be $E$-null and so represents a non-trivial element of $\pi _{E}(X)$. That
is, $\lambda $ \textquotedblleft unrolls\textquotedblright\ to its lift $%
\widetilde{\lambda }$ to $X_{E}$ at the basepoint, ending at $[\lambda
]_{E}\in \phi _{E}^{-1}(\ast )\backslash \{\ast \}$ in $X_{E}$. Now consider
the lift of the segment $p_{1}$ to $X_{E}$ at the basepoint, which contains $%
\widetilde{\lambda }$ except for its endpoint $[\lambda ]_{E}$. Due to the
gap that is crossed by $\lambda $, $[\lambda ]_{E}$ lies in some path
component that does not contain the basepoint in $X_{E}$. The segment $p_{1}$
\textquotedblleft unrolls\textquotedblright\ in terms of the uniform
structure, but not of course topologically because it is simply connected
and it must remain inside the path component of $\ast $ in $X_{E}$. That is,
the non-trivial deck group element $[\lambda ]_{E}$ takes the path component
of the basepoint in $X_{E}$ to a different path component.

Returning to the path component of $\ast $ in $X$, the lift of $p$ at $\ast $
can never return to any $E^{\ast }$-ball because $\phi _{E}$ is a bijection
on $E^{\ast }$-balls. This implies that $X_{E}$ is not compact (although it
is locally compact). $X_{E}$ is also not connected, and in fact for
sufficiently small $F\subset E$, $\phi _{EF}$ is not surjective. That is, as
soon as an entourage $F$ \textquotedblleft sees a gap that $E$ didn't
see\textquotedblright\ the loop $\lambda $ can no longer be refined in its $%
E $-homotopy class to an $F$-loop. Therefore $\theta _{EF}:\pi
_{F}(X)\rightarrow \pi _{E}(X)$ is not surjective, hence $\phi _{EF}$ is not
surjective either. This means that some path components of $X_{E}$ are not
in the image of $\phi ^{E}:\widetilde{X}\rightarrow X_{E}$ and in the
inverse limit, only the path component of $\ast $ \textquotedblleft
survives\textquotedblright . That is, the lift of $p$ to $\widetilde{X}$ at $%
\ast $ is equal to $\widetilde{X}$ and $\phi :\widetilde{X}\rightarrow X$ is
a bijection onto the path component. Note that $\phi $ in this case is
uniformly continuous but not bi-uniformly continuous--it \textquotedblleft
rolls up\textquotedblright\ $\mathbb{R}$ into the dense path component of $%
\ast $.

Now any discrete cover of $X$ by a continuum $X_{i}$ is covered by some $%
X_{E}$, which maps into $X_{i}$ as a dense subset--and therefore $X_{i}$ is
a compactification of $X_{E}$. The cover $f_{i}:X_{i}\rightarrow X$ is
induced by a quotient of $X_{E}$ by a normal subgroup of finite index and of
course by definition $f_{i}$ is surjective. In other words, the inverse
system leading to $\widehat{X}$ can be viewed as a uniquely determined
compactification of the fundamental inverse system, which via compactness
\textquotedblleft corrects\textquotedblright\ the problem that the bonding
maps of the fundamental inverse system are not surjective in this particular
example for any choice of a cofinal sequence of entourages. Some of this
analysis applies more generally to matchbox manifolds.
\end{example}

\end{document}